\numberwithin{equation}{section}
\theoremstyle{plain}
\newtheorem{Thm}{Theorem}[section]
\newtheorem*{Thm*}{Theorem}
\newtheorem{Lem}[Thm]{Lemma}
\newtheorem{Prop}[Thm]{Proposition}
\theoremstyle{definition}
\newtheorem{Rem}[Thm]{Remark}
\newtheorem{?}[Thm]{Problem}
\newcommand{\p}{\partial}
\newcommand{\R}{\mathbb{R}}
\newcommand{\e}{\varepsilon}
\newcommand{\jump}[1]{\left\ldbrack#1\right\rdbrack}
\newcommand{\ub}{\bar{u}}
\newcommand{\vb}{\bar{v}}
\newcommand{\pb}{\bar{p}}
\newcommand{\thetab}{\bar{\theta}}
\newcommand{\Eb}{\bar{E}}
\newcommand{\vt}{\tilde{v}}
\newcommand{\ut}{\tilde{u}}
\newcommand{\pt}{\tilde{p}}
\newcommand{\Et}{\tilde{E}}
\newcommand{\thetat}{\tilde{\theta}}
\newcommand{\Rt}{\tilde{R}}
\newcommand{\Lt}{\tilde{L}}
\newcommand{\error}{\mathfrak{E}}
\newcommand{\vtt}{\breve{v}}
\newcommand{\utt}{\breve{u}}
\newcommand{\ptt}{\breve{p}}
\newcommand{\Ett}{\breve{E}}
\newcommand{\thetatt}{\breve{\theta}}
\newcommand{\Ltt}{\breve{L}}
\newcommand{\X}{\mathcal{X}}
\newcommand{\Y}{\mathcal{Y}}
\newcommand{\Z}{\mathcal{Z}}
\newcommand{\C}{\mathcal{C}}
\newcommand{\abs}[1]{\left\lvert#1\right\rvert}
\newcommand{\norm}[1]{\left\lVert#1\right\rVert}
\let\oldtocsection=\tocsection
\let\oldtocsubsection=\tocsubsection
\let\oldtocsubsubsection=\tocsubsubsection
\renewcommand{\tocsection}[2]{\hspace{0em}\oldtocsection{#1}{#2}}
\renewcommand{\tocsubsection}[2]{\hspace{1em}\oldtocsubsection{#1}{#2}}
\renewcommand{\tocsubsubsection}[2]{\hspace{2em}\oldtocsubsubsection{#1}{#2}}
\begin{document}


\title[Composite wave of two viscous shocks under periodic perturbations]{Periodic perturbations of a composite wave of two viscous shocks for 1-D full compressible Navier-Stokes equations}

\author[Q. Yuan]{Qian YUAN}
\address[Q. Yuan]{Institute of Applied Mathematics, Academy of Mathematics and System Science, Chinese Academy of Sciences, Beijing, China}
\email{qyuan@amss.ac.cn}

\author[Y. Yuan]{Yuan Yuan$ ^* $}
\address[Y. Yuan]{South China Research Center for Applied
	Mathematics and Interdisciplinary Studies, South China Normal University \\
	Guangzhou 510631, China. }
\email{yyuan2102@m.scnu.edu.cn}

\thanks{$ ^* $Corresponding author.}
\thanks{Qian Yuan is supported by the China Postdoctoral Science Foundation funded projects 2019M660831 and 2020TQ0345. The research of Yuan Yuan is supported by the National Natural Science Foundation of China (Grants No. 11901208 and 11971009), and the Natural Science Foundation of Guangdong Province, China (Grant No. 2021A1515010247).}

\maketitle

\begin{abstract} 
This paper is concerned with the asymptotic stability of a composite wave of two viscous shocks under spatially periodic perturbations for the 1-D full compressible Navier-Stokes equations. 
It is proved that as time increases, the solution approaches the background composite wave with a shift for each shock, where the shifts can be uniquely determined if both the periodic perturbations and strengths of two shocks are small. 
The key of the proof is to construct a suitable ansatz such that the anti-derivative method works.

\end{abstract}

\tableofcontents


\section{Introduction}

The one-dimensional full compressible Navier-Stokes (N-S) equations in the Lagrangian coordinates read
\begin{equation}\label{NS}
\begin{cases}
\p_t v - \p_x u = 0, & \quad \\
\p_t u + \p_x p(v,\theta) = \mu \p_x \left( \frac{\p_x u}{v} \right), & \quad \\
\p_t E + \p_x \left(p(v,\theta) u\right) = \kappa\p_x \left(\frac{\p_x\theta}{v}\right) + \mu \p_x \left(\frac{u\p_x u}{v}\right), & \quad
\end{cases} x\in \R, t>0,
\end{equation}
where $ v(x,t)>0 $ is the specific volume, $ u(x,t)\in\R $ is the velocity, $ \theta(x,t)>0 $ is the absolute temperature, the pressure $ p(v,\theta) $ satisfies $ p(v,\theta) = \frac{R\theta}{v}, $ 
and the total energy is given by $ E = e + \frac{1}{2} u^2, $ where the internal energy $ e $ is $ e = \frac{R}{\gamma-1} \theta + \text{const.} $

When $ \mu=0 $ and $ \kappa = 0, $ \cref{NS} is the full compressible Euler equations. This hyperbolic system has rich wave phenomena such as the shock, rarefaction wave, contact discontinuity and their compositions, which are called Riemann solutions, satisfying the initial data
\begin{equation*}
	(v,u,E)(x,0) = 
	\begin{cases}
	(\vb_l,\ub_l,\Eb_l), \qquad x<0, \\
	(\vb_r,\ub_r,\Eb_r), \qquad x>0,
	\end{cases}
\end{equation*}
where $ \vb_{l,r}>0, \ub_{l,r}, \Eb_{l,r}>0 $ are constants. 
This paper is concerned about a composite wave of two shocks, i.e. there is an intermediate state $ \left(\vb_m,\ub_m,\Eb_m\right) $ connecting the left state $ \left(\vb_l,\ub_l,\Eb_l\right) $ by a 1-shock and the right state $ \left(\vb_r,\ub_r,\Eb_r\right) $ by a 3-shock. More precisely, by denoting $ \jump{\cdot}_1 $ and $ \jump{\cdot}_3 $ as the jumps when crossing the 1-shock and 3-shock, respectively (e.g. $ \jump{v}_1 = \vb_m- \vb_l $ and $ \jump{v}_3 = \vb_r - \vb_m, $ etc), these constants satisfy the Rankine-Hugoniot conditions, 
\begin{equation}\label{RH}
	\begin{aligned}
		-s_i \jump{v}_i - \jump{u}_i & = 0, \\
		-s_i \jump{u}_i + \jump{p}_i & = 0, \\
		-s_i \jump{E}_i + \jump{pu}_i & = 0,
	\end{aligned} \qquad \text{for } i=1,3,
\end{equation}
and the Lax entropy conditions,
\begin{equation}\label{entropy}
	\begin{aligned}
		& \lambda_1(\vb_m,\thetab_m) <s_1<\lambda_1(\vb_l,\thetab_l), \quad  \lambda_3(\vb_r,\thetab_r) <s_3<\lambda_3(\vb_m,\thetab_m),
	\end{aligned}
\end{equation}
where $ s_1<0 $ and $ s_3>0 $ are the shock speeds of 1-shock and 3-shock, and $ \lambda_1(v,\theta) = -\sqrt{\frac{\gamma p(v,\theta)}{v}} $ and $ \lambda_3(v,\theta) = -\lambda_1(v,\theta) $ are the first and third eigenvalues of the full compressible Euler equations, respectively.

For the compressible N-S equations \cref{NS}, it is well known that if the initial data tends to constant states at the far field, i.e.
\begin{equation}\label{const-end}
	(v_0,u_0, E_0)(x) \rightarrow \begin{cases}
		\left(\vb_l,\ub_l,\Eb_l\right) & \quad \text{as } x\rightarrow -\infty,\\
		\left(\vb_r,\ub_r,\Eb_r\right) & \quad \text{as } x\rightarrow +\infty,
	\end{cases}
\end{equation}
the large time behavior of the solution is governed by the viscous version of the corresponding Riemann solution.  
The viscous version of an $ i $-shock is a traveling wave solution $ \left(v_i^S, u_i^S,E_i^S\right)(x-s_it) $ to \cref{NS}, satisfying 
\begin{equation}\label{ode}
	\begin{cases}
		-s_i \left(v_i^S\right)'(x) - \left(u_i^S\right)'(x) =0, \\
		-s_i \left(u_i^S\right)'(x) + \left( p\left(v_i^S,\theta_i^S\right) \right)'(x) = \mu \Big(\frac{\left(u_i^S\right)'}{v_i^S}\Big)'(x),  \\
		-s_i \left(E_i^S\right)'(x) + \left( p\left(v_i^S,\theta_i^S\right) u_i^S \right)'(x) = \kappa \Big(\frac{\left(\theta_i^S\right)'}{v_i^S}\Big)'(x) + \mu \Big(\frac{u_i^S\left(u_i^S\right)'}{v_i^S}\Big)'(x),
	\end{cases}
\end{equation}
with $ \theta_i^S = \frac{\gamma-1}{R} \big[E_i^S-\frac{1}{2} \left(u_i^S\right)^2 \big], $ and 
\begin{align*}
	& \left(v_1^S, u_1^S,E_1^S\right)(x) \to (\vb_l,\ub_l,\Eb_l) \quad \big(\text{resp. } (\vb_m,\ub_m,\Eb_m) \big) \quad \text{ as } x\to -\infty \ (\text{resp. } +\infty), \\
	& \left(v_3^S, u_3^S,E_3^S\right)(x) \to (\vb_m,\ub_m,\Eb_m) \quad \big(\text{resp. }  (\vb_r,\ub_r,\Eb_r) \big) \quad \text{ as } x\to -\infty \ (\text{resp. } +\infty).
\end{align*}

In this paper, we are concerned with the stability of the composite wave of two viscous shocks under periodic perturbations, i.e. we consider a Cauchy problem for \cref{NS} with the initial data 
\begin{equation}\label{ic}
	(v,u,E)(x,0) = (v_0,u_0,E_0)(x), \quad x\in\R,
\end{equation}
satisfying
\begin{equation}\label{end-behavior}
  (v_0,u_0, E_0)(x) \rightarrow 
  \begin{cases}
		\left(\vb_l,\ub_l,\Eb_l\right)+\left(\phi_{0l}, \psi_{0l}, w_{0l}\right)(x) & \quad \text{as } x\rightarrow -\infty,\\
		\left(\vb_r,\ub_r,\Eb_r\right)+(\phi_{0r}, \psi_{0r},w_{0r})(x) & \quad \text{as } x\rightarrow +\infty,
	\end{cases}
\end{equation}
where the constants $ (\vb_{l,r}, \ub_{l,r}, \Eb_{l,r}) $ satisfy \cref{RH,entropy}, and $ \left(\phi_{0l,0r}, \psi_{0l,0r}, w_{0l,0r} \right) $ are periodic functions with period $ \pi_{l,r}>0 $ and have zero averages, i.e.
\begin{equation}\label{zero-ave}
\int_{0}^{\pi_{l,r}} \left(\phi_{0l,0r}, \psi_{0l,0r}, w_{0l,0r} \right)(x) dx =0.
\end{equation}


It is the most important feature of the nonlinear hyperbolic equations that no matter how smooth and small the initial data is, the classical solution may blow up, that is, the shock waves may appear in finite time. For the 1-d hyperbolic equations, many literatures have shown that the shock waves possess strong structural stability under localized (e.g. compactly supported) perturbations; see \cite{Oleinik1960,Liu1977a}. 
For the Navier-Stokes equations, due to the effect of viscosity, the perturbed shock wave time-asymptotically tends to its viscous version, a viscous shock, which is a smooth travelling wave solution to the compressible N-S equations, connecting the shock states at the far field and travelling with the shock speed. 
The first result about the stability of viscous shocks owns to I'lin-Ole\v{\i}nik \cite{Oleinik1960} for the 1-d scalar viscous conservation laws, where the approach is based on a maximum principle for the anti-derivative variables of the perturbations. 
For the systems, \cite{Matsumura1985,Kawashima1985,Goodman1986} used the energy method to prove the stability of a single viscous shock provided that the shock-strength is small and the initial perturbation carries no excessive mass. This zero-mass condition was then successfully removed by Liu \cite{Liu1985}, Szepessy-Xin \cite{Szepessy1993} and Liu-Zeng \cite{Liu2015}, by introducing diffusion waves propagating along other families of characteristics and establishing their point-wise estimates. 
If the shock-strength is arbitrarily large, \cite{Zumbrun1998,MZ2004,HRZ2006} showed the nonlinear stability if a spectral stability holds true, which was then verified by the works \cite{HLyZ2009,HLaZ2010,BZ2016} for the Navier-Stokes equations, based on numeric analysis or high Mach numbers. Recently, with the aid of the effective velocity, \cite{He2019a} successfully used the elementary energy method to obtain the nonlinear stability of the large-amplitude viscous shock for the isentropic Navier-Stokes equations. 
For a composite wave of two weak viscous shocks of the full compressible N-S equations, Huang-Matsumura \cite{Huang2009} utilized the energy method to achieve the nonlinear stability under $ H^1(\R) \cap L^1(\R) $ perturbations. 
We also refer to \cite{MNrare1986,LX1997,HXY2004,Zeng2009,HLM2010} for the stability results of other Riemann solutions such as rarefaction waves, contact discontinuities and other composite waves, in which the initial perturbations are at least in the $ H^1(\R) $ space.

On the other hand, the study of the periodic solutions to the hyperbolic conservation laws is also important and interesting, where the solutions have infinite oscillations at the far field and therefore, there are infinitely many wave interactions.
Lax \cite{Lax1957} was the first to show algebraic decay rates of the periodic solutions to the 1-d scalar hyperbolic equations. 
Then with the aid of the novel Glimm scheme, Glimm-Lax \cite{Glimm1970} proceeded to study some 1-d $ 2\times2 $ hyperbolic systems, showing the global existence of the periodic solutions and the large time behaviors; see also Dafermos \cite{Dafermos1990}. 
However, for the compressible Euler equations, the global existence of periodic solutions is still open until now. 
In fact, the difficulty is mainly due to a resonant phenomenon proved by Majda-Rosales \cite{Majda1984} for the periodic solutions to the full compressible Euler equations, which never appears in the case for $ 2\times2 $ hyperbolic systems.
We also refer to \cite{Dafermos2013} for the asymptotic behavior of the periodic solutions to scalar convex conservation laws in multiple dimensions, and \cite{Qu2015} for a long time existence result of the periodic solutions to the 1-d full compressible Euler equations, respectively. 

The works of Lax and Glimm \cite{Lax1957,Glimm1970} reveal the asymptotic stability of constants with periodic perturbations for conservation laws.
Recently, Z. Xin and the authors \cite{Xin2018,Xin2019,Yuan2019} studied the stability of shocks and rarefaction waves with periodic perturbations for the 1-d scalar conservation laws in both inviscid and viscous cases. 
It was shown that different from the localized perturbations, there is a new phenomenon that the inviscid shock and viscous shock have different kinds of shifts under periodic perturbations, where the latter one depends on the fluxes, while the former one does not.
Huang-Yuan \cite{HY1shock} continued to study the nonlinear stability of a single viscous shock for the isentropic compressible N-S equations, in which the periodic perturbations satisfy a zero-mass type condition. 

\vspace{.2cm}
 
In this paper, we prove the nonlinear stability of a composite wave of two viscous shocks under general periodic perturbations for the full compressible N-S equations. To deal with the periodic perturbations which are not integrable on $ \R, $ the key point is to construct a suitable ansatz to carry the same oscillations as those of the solution at the far field and make the anti-derivative method work. Motivated by \cite{Xin2019,HY1shock}, the ansatz is constructed through selecting appropriate shift functions (of time) for the $ 1 $-viscous shock and $ 3 $-viscous shock, respectively, which is totally different from that in \cite{Matsumura1985,Huang2009};
see \cref{ansatz-a} for the details. It is proved that if the periodic perturbations and the strengths of shocks are both suitably small, the ansatz can be well constructed by using the implicit function theorem. With the desired ansatz, we can define the anti-derivative variables of the perturbations and use the energy method to achieve the main result, \cref{Thm}.


The rest of the paper is organized as follows. In Section 2, we introduce some notations and some useful lemmas, then present the construction of the ansatz and the main result. In Section 3, we define the anti-derivative variables of the perturbations with their error terms, then give the reformulated problem. Then the a priori estimates are shown in Section 4. 
The proofs of Lemmas \ref{Lem-shift} and \ref{Lem-F} about the shift curves and the error terms of the ansatz are supplemented in the last Section 5.

\vspace{.3cm}

\section{Ansatz and Main Results}\label{Sec-ansatz}

\subsection{Preliminaries}
In the beginning, we introduce some  notations and recall some basic properties of viscous shocks and periodic solutions to \cref{NS}.

First, since the system \cref{NS}, the R-H conditions \cref{RH}, and the entropy conditions \cref{entropy} are invariant under the Galilean transform, one can let $ u-\ub_m, \ub_l-\ub_m,0,\ub_r-\ub_m $ substitute $ u, \ub_l,\ub_m,\ub_r, $ respectively, to assume without loss of generality that
\begin{equation}\label{ub-m}
	\ub_m =0.
\end{equation}
Assume that the initial data \cref{ic} satisfies 
\begin{equation}\label{ic-L1}
	\begin{cases}
		(v_0-v^S -\phi_{0l}, u_0-u^S-\psi_{0l},E_0-E^S-w_{0l})(x) \in L^1(-\infty,0), \\
		(v_0-v^S -\phi_{0r}, u_0-u^S-\psi_{0r},E_0-E^S-w_{0r})(x) \in L^1(0,+\infty).
	\end{cases}
\end{equation}
\textbf{Notations.} 
Let $ \norm{\cdot} := \norm{\cdot}_{L^2(\R)} $ and $ \norm{\cdot}_l := \norm{\cdot}_{H^l(\R)} $ for $ l\geq 1. $ 
Denote 
\begin{equation}\label{e}
\begin{aligned}
	\e & := \sum\limits_{i=l,r} \norm{\phi_{0i}, \psi_{0i}, w_{0i} }_{H^3((0,\pi_i))}  \\
	& \quad + \int_{-\infty}^0 \big(\abs{v_0-v^S-\phi_{0l}} + \abs{u_0-u^S-\psi_{0l}} + \abs{E_0-E^S-w_{0l}} \big) dx \\
	& \quad + \int_0^{+\infty} \big(\abs{v_0-v^S-\phi_{0r}} + \abs{u_0-u^S-\psi_{0r}} + \abs{E_0-E^S-w_{0r}}\big) dx,
\end{aligned}
\end{equation} 
and the wave strengths as
\begin{equation}\label{strength}
	\begin{aligned}
		& \delta_1 := \abs{\vb_m-\vb_l}, \quad \delta_3 := \abs{\vb_r-\vb_m} \quad \text{and } \quad \delta:= \min\{ \delta_1, \delta_3 \}.
	\end{aligned}
\end{equation}
Then it follows from \cref{RH} that
\begin{equation}\label{strengh-1}
	\begin{aligned}
	& c \delta_1 \leq \abs{\ub_l}, \abs{\thetab_m-\thetab_l} \leq C \delta_1, \\
	& c \delta_3 \leq \abs{\ub_r}, \abs{\thetab_r-\thetab_m} \leq C \delta_3,
	\end{aligned}
\end{equation}
here and hereafter we let $ 0<c<1<C $ denote generic constants, independent of $ \e,\delta $ and $ t. $
As in \cite{Huang2009}, we also assume 
\begin{equation}\label{uniform}
	0< \max \{\delta_1, \delta_3\} \leq C\delta \quad \text{ as } \max \{\delta_1, \delta_3\} \to 0,
\end{equation}
which means that the strengths of two shocks are comparable. 

\begin{Lem}[\cite{Kawashima1985,Huang2009}]\label{Lem-shocks}
	Under the conditions \cref{RH,entropy}, assume that  $ \gamma\in(1,2] $ and \cref{uniform} holds with $ \delta>0 $ being small. Then \cref{ode} admits a viscous shock wave $ \big(v_i^S, u_i^S,E_i^S\big)(x-s_it) $ satisfying $ (u^S_i)' <0 $ for $ i=1,3. $
	Moreover, there exist constants $ c_0>0 $ and $ C>0, $ independent of $ x $ and $ \delta, $ such that
	\begin{align*}
		\abs{\left(v_1^S(x)-\vb_m, u_1^S(x), \theta_1^S(x)-\thetab_m\right)} \leq C \delta_1 e^{-c_0\delta \abs{x}} & \quad \text{for } x>0, \\
		\abs{\left(v_3^S(x)-\vb_m, u_3^S(x), \theta_3^S(x)-\thetab_m\right)} \leq C \delta_3 e^{-c_0\delta \abs{x}} & \quad \text{for } x<0, \\
		\abs{ \frac{d}{dx}\left(v_i^S, u_i^S, \theta_i^S \right)(x) } \leq C \delta_i^2 e^{-c_0\delta \abs{x}} & \quad \text{for } x\in\R, \quad i=1,3.
	\end{align*}
\end{Lem}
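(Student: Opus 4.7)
The plan is to treat each $i$-viscous shock ($i=1,3$) separately by reducing the system \cref{ode} to a planar autonomous ODE and constructing the profile as a heteroclinic orbit. Consider $i=1$; the case $i=3$ is analogous. Writing $\xi:=x-s_1 t$ and integrating each of the three equations in \cref{ode} once, the continuity equation yields the algebraic relation $u_1^S(\xi)=-s_1\bigl(v_1^S(\xi)-\vb_l\bigr)+\ub_l$, which eliminates $u_1^S$. Integrating the momentum and energy equations once and fixing the integration constants by the left far-field limit (the Rankine--Hugoniot conditions \cref{RH} then automatically enforce the same constants at $+\infty$) reduces \cref{ode} to a $2\times 2$ autonomous system in $(v_1^S,\theta_1^S)$ whose only two equilibria are exactly $(\vb_l,\thetab_l)$ and $(\vb_m,\thetab_m)$.

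Next I would analyze the linearizations at each equilibrium. Expanding the vector field to leading order in $\delta_1$, the Lax entropy inequalities \cref{entropy} together with the explicit form $\lambda_1=-\sqrt{\gamma p/v}$ should guarantee that $(\vb_l,\thetab_l)$ is an unstable node whose eigenvalues are real, positive and of order $\delta_1$, whereas $(\vb_m,\thetab_m)$ is a stable node whose eigenvalues are real, negative and of order $\delta_1$. Existence of a heteroclinic orbit for small $\delta_1$ then follows from a standard invariant-region/phase-plane argument (or by perturbing off the Burgers-type singular limit $\delta_1\to 0$). Monotonicity $(u_1^S)'<0$ is automatic once the orbit is chosen in the physically relevant connected component of the complement of the nullclines, using the admissible direction dictated by the $1$-family Lax condition.

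For the exponential estimates, I would apply the stable/unstable manifold theorem at each endpoint: the profile approaches its limit at rate $e^{-c_1\delta_1|\xi|}$, the rate being the minimum modulus of the linearization eigenvalues. Away from the endpoints the profile is trivially bounded by $C\delta_1$ because $|\vb_m-\vb_l|\le C\delta_1$, so gluing yields the global bound stated in the lemma, and \cref{uniform} allows replacing $\delta_1$ by $\delta$ in the exponential. For the derivative estimate, the reduced ODE immediately gives a factorization of the form $|(v_1^S)'|,|(\theta_1^S)'|\le C|v_1^S-\vb_l|\,|v_1^S-\vb_m|$, which together with the pointwise bound just derived yields $\le C\delta_1^2\,e^{-c_0\delta|\xi|}$. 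The bound on $(u_1^S)'$ is then immediate from the algebraic relation derived above, and higher derivatives follow by differentiating the reduced system.

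The main obstacle would be the linearization analysis at the equilibria: one must check carefully that the eigenvalues of the linearized vector field are real, distinct, and of precise order $\delta_1$ (rather than complex or of a different magnitude), and one must verify that the unstable manifold of the left equilibrium matches transversally the stable manifold of the middle equilibrium for all small $\delta_1$. This is where the smallness assumption on $\delta$ is essential, and it also fixes the constants $c_0$ and $C$ uniformly in the shock strengths. The calculation is more delicate than in the isentropic case because of the additional $v$--$\theta$ coupling through the energy equation, but the argument follows the classical lines of \cite{Kawashima1985}, reused in \cite{Huang2009} for the composite setting.
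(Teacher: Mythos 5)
The paper does not prove this lemma: it is stated with the citation \cite{Kawashima1985,Huang2009}, and the construction goes back to Gilbarg's classical phase-plane analysis. So you are not being compared against an argument in the paper but against the cited literature.

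Your overall strategy — eliminate $u$ through the integrated continuity equation, reduce \cref{ode} to a planar autonomous system in $(v_i^S,\theta_i^S)$ whose equilibria are the two shock states, construct the heteroclinic, and read off the decay rate from the linearization — is indeed the route taken in the classical references, and the closing steps (the derivative factorization, the gluing with the trivial $C\delta_i$ bound near the origin, and invoking \cref{uniform} to pass from $\delta_i$ to $\delta$ in the exponent) are fine. However, the linearization claim at the core of your sketch is wrong: the two eigenvalues at each endpoint are \emph{not} both of order $\delta_i$, and in particular the equilibria are not "nodes with eigenvalues of order $\delta_1$." A direct computation of the Jacobian of the reduced $(v,\theta)$-system at $(\vb_l,\thetab_l)$ gives $O(1)$ diagonal entries — e.g.\ the $(1,1)$ entry is $-\tfrac{\vb_l}{\mu s_1}\big(s_1^2+p_v\big)$, and $s_1^2+p_v\to(\gamma-1)p/v\neq0$ as $\delta_1\to0$ — while the determinant cancels to leading order (one finds $AC-BD=O(\delta_1)$). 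Hence the trace is $O(1)$ and the determinant is $O(\delta_1)$, so \emph{one} eigenvalue is $O(1)$ and only the other is $O(\delta_1)$. The decay rate $e^{-c_0\delta|\xi|}$ in the lemma is governed by the small eigenvalue, and the $O(1)$ mode is the fast transversal direction that must be slaved to the slow one; this slow-fast structure, not a node-to-node matching of $O(\delta)$ eigenvalues, is what actually drives Gilbarg's existence and uniqueness argument. Whether the entry endpoint is a saddle or a genuine unstable node is decided by the sign of the $O(\delta_1)$ determinant — a second-order computation your sketch omits — and it changes the shape of the heteroclinic argument (unique one-dimensional unstable manifold versus selecting one trajectory out of a two-dimensional family). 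So while your outline captures the geometry of the proof, the claimed eigenvalue asymptotics are incorrect and would need to be redone before the monotonicity, existence, and decay estimates can be justified.
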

When $ A $ and $ B $ represent either $ v,u,E $ or $ \theta, $ it follows from \cref{uniform} and \cref{Lem-shocks} that 
\begin{equation}\label{cross}
	\abs{ \p_x^k \big[\big(A_1^S(x-s_1t) - \bar{A}_m\big) (B_3^S(x-s_3t)-\bar{B}_m\big)\big] } \leq C \delta^2 e^{-c_1\delta t -c_0\delta \abs{x}}, \quad k=0,1,
\end{equation}
where $ c_1 = c_0 \min\{\abs{s_1},s_3\}. $

Now we give some properties of the periodic solutions to \cref{NS}.
\begin{Lem}\label{Lem-periodic}
	Assume that $ (v_0,u_0,E_0)(x)\in H^k\big((0,\pi)\big) $ with $ k\geq 2 $ is periodic with period $ \pi>0 $ and average $ (\vb,\ub,\Eb). $ Then there exists $ \e_0>0 $ such that if 
	\begin{equation}
	\e_1:=\norm{(v_0,u_0,E_0)-(\vb,\ub,\Eb) }_{H^k((0,\pi))} \leq \e_0,
	\end{equation}
	there exits a unique periodic solution $$ (v,u,E)(x,t) \in C\big(0,+\infty;H^k((0,\pi)) \big) $$ to \cref{NS} with the initial data $ (v,u,E)(x,0) = (v_0,u_0,E_0)(x), $ which has the average $ (\vb,\ub,\Eb), $ and satisfies
	\begin{equation}\label{decay-per}
	\norm{(v,u,E)-(\vb,\ub,\Eb)}_{H^k((0,\pi))}(t) \leq C \e_1 e^{-2\alpha t}, \quad t\geq 0,
	\end{equation}
	where the constants $ C>0 $ and $ \alpha >0 $ are independent of $ \e_1 $ and $ t. $
\end{Lem}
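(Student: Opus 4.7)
The plan is to exploit conservation of the spatial averages on the periodic cell $(0,\pi)$ together with standard Matsumura--Nishida energy estimates for compressible Navier--Stokes; the zero-mean constraint combined with Poincar\'e's inequality converts viscous dissipation into exponential decay of every Sobolev norm.

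\textbf{Step 1 (Reduction, conservation of averages, local existence).} By Galilean invariance we may assume $\bar u = 0$. Writing $\phi = v - \bar v$, $\psi = u$, $\omega = E - \bar E$ and integrating each equation of \cref{NS} over one spatial period, periodicity of $(v,u,E)(\cdot,t)$ kills every flux boundary term, so the three averages $\int_0^\pi \phi\,dx$, $\int_0^\pi \psi\,dx$, $\int_0^\pi \omega\,dx$ are conserved in time and hence remain identically zero. A standard contraction argument in $C([0,T];H^k((0,\pi)))$ then produces a unique short-time periodic solution when $\e_1$ is small, with $v,\theta$ strictly positive.

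\textbf{Step 2 (A priori energy estimate).} The viscous terms in \cref{NS} directly give the dissipation $\mu\|\p_x\psi\|_{H^k}^2 + \kappa\|\p_x\theta\|_{H^k}^2$, where $\theta$ is recovered from $E = \tfrac{R}{\gamma-1}\theta + \tfrac12 u^2$. Dissipation for the specific volume $\phi$ is absent and must be manufactured: to the basic energy one adds a small multiple of a Matsumura--Nishida-type cross term $\sum_{j=0}^{k-1}\int_0^\pi \p_x^j \psi\,\p_x^{j+1}\phi\,dx$, and uses the momentum equation $\p_t\psi + \p_x p = \mu(\p_x\psi/v)_x$ together with $p_v(\bar v,\bar\theta) = -R\bar\theta/\bar v^2 < 0$ (whose sign fixes the sign of the cross term) to generate a positive contribution $c\,\|\p_x\phi\|_{H^{k-1}}^2$ on the dissipation side. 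Because $k\geq 2$, Sobolev embedding places $\phi,\psi,\omega$ in $L^\infty$ with small norm, so the variable coefficients $1/v,\,p_v(v,\theta),\,p_\theta(v,\theta)$ remain uniformly bounded and all nonlinear remainders are absorbed by Cauchy--Schwarz. This yields
\begin{equation*}
\frac{d}{dt}\mathcal{E}(t) + c_0\,\mathcal{D}(t) \leq 0,
\end{equation*}
with $\mathcal{E}(t) \sim \|(\phi,\psi,\omega)\|_{H^k((0,\pi))}^2$ and $\mathcal{D}(t) \sim \|\p_x\phi\|_{H^{k-1}}^2 + \|(\p_x\psi,\p_x\omega)\|_{H^k}^2$.

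\textbf{Step 3 (Exponential decay, globalisation, main obstacle).} By Step 1, $\phi(\cdot,t)$ has zero mean on $(0,\pi)$; derivatives $\p_x^j\phi,\p_x^j\psi,\p_x^j\omega$ of order $j\geq 1$ automatically have zero mean by periodicity. The Poincar\'e inequality on $(0,\pi)$ therefore gives $\mathcal{E}\leq C\,\mathcal{D}$, i.e.\ $\mathcal{D}\geq 4\alpha\,\mathcal{E}$ for some $\alpha>0$ independent of $t$, so Gr\"onwall yields $\mathcal{E}(t)\leq \mathcal{E}(0)\,e^{-4\alpha t}$, which after taking square roots is \cref{decay-per}; a standard continuity argument globalises the solution when $\e_0$ is small enough to propagate $\mathcal{E}(t)\ll 1$, and uniqueness follows from an $L^2$-difference estimate. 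The technical heart of the proof is Step 2: one must propagate the cross-term correction through every order $0,1,\dots,k-1$ and control the top-order commutators between $\p_x^k$ and the variable coefficients $1/v,\,p_v(v,\theta),\,p_\theta(v,\theta)$. The hypothesis $k\geq 2$ is used precisely so that Sobolev embedding places $\phi,\psi,\omega$ in $L^\infty$ and these commutators close; once the correct energy--dissipation pair has been identified, the zero-mean Poincar\'e inequality on the periodic cell supplies the exponential rate.
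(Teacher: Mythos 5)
Your proposal is correct and follows essentially the same route as the paper's own proof in the appendix: conservation of the cell averages plus Poincar\'e on $(0,\pi)$ converts dissipation into exponential decay, and the missing $\p_x\phi$-dissipation is generated by a Matsumura--Nishida cross term $\int \psi\,\p_x\phi$ arising from the momentum equation. The only cosmetic difference is that the paper takes the relative-entropy functional $\Phi(s)=s-\ln s-1$ as its zeroth-order energy and includes the term $\int\frac{\mu}{2v^2}(\p_x\phi)^2$ inside the cross-term functional (i.e.\ it first rewrites the momentum equation in effective-velocity form), whereas you use the plain $L^2$ energy; for small perturbations these are equivalent.
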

The proof of \cref{Lem-periodic} is based on standard energy method with the aid of Poincar\'{e} inequality, which is left in the appendix for easing reading.

\vspace{.2cm}

\subsection{Ansatz}
In this paper, the \textit{ansatz} $(\vt, \ut, \Et)$ is constructed so that the anti-derivative method is available, even though the initial perturbation in \cref{end-behavior} is not integrable on $ \R. $
Motivated by \cite{Xin2019,HY1shock}, it is plausible that the solution $ (v,u,E) $ of \cref{NS,end-behavior} tends to the periodic solutions $ \left( v_{l,r}, u_{l,r},E_{l,r}\right) $ of \cref{NS} as $ x \to \mp \infty $ for all $ t\geq 0, $ which have the periodic initial data 
\begin{equation}\label{sol-peri}
	\left( v_i, u_i,E_i\right)(x,0) = (\vb_i,\ub_i,\Eb_i) + \left(\phi_{0i},\psi_{0i},w_{0i}\right)(x) \qquad \text{for } i=l,r;
\end{equation}
see the existence of periodic solutions in \cref{Lem-periodic}.
To use the energy method, the ansatz is expected to carry the same oscillations as those of the solution to \cref{NS} with \cref{end-behavior} at the far field. Following the idea of \cite{Xin2019,HY1shock}, we use the background viscous shocks to connect two periodic solutions $ \left( v_{l,r}, u_{l,r},E_{l,r}\right), $ and also a proper linear diffusion wave to carry excessive mass.


For $ i=l,r, $ let $ \theta_i(x,t) := \frac{\gamma-1}{R} \left( E_i - \frac{1}{2} u_i^2\right)(x,t), $
and define the perturbations of the periodic solutions as
\begin{equation}\label{phi-psi-lr}
\begin{aligned}
\left( \phi_i, \psi_i, w_i \right)(x,t) &:= \left(v_i,u_i,E_i \right)(x,t) - \left(\vb_i, \ub_i, \Eb_i\right), \\
\zeta_i(x,t) &:= \theta_i(x,t) - \thetab_i,
\end{aligned}
\end{equation}
which satisfies that $ \int_0^{\pi_i} \left( \phi_i, \psi_i, w_i \right)(x,t) dx = 0 $ for all $ t\geq 0; $ see \cref{Lem-periodic}. 


For the viscous shocks $ \left(v_1^S,u_1^S,E_1^S\right) $ and $ \left(v_3^S,u_3^S,E_3^S\right), $ define
\begin{align*}
g_1(x) & := \frac{v_1^S(x)-\vb_l}{\jump{v}_1}= \frac{u_1^S(x)-\ub_l}{-\ub_l}, \qquad h_1(x) := \frac{E_1^S(x)-\Eb_l}{\jump{E}_1},  \\
g_3(x) & := \frac{v_3^S(x)-\vb_m}{\jump{v}_3}= \frac{u_3^S(x)}{\ub_r}, \qquad \qquad  h_3(x) := \frac{E_3^S(x)-\Eb_m}{\jump{E}_3},
\end{align*}
where the two equalities follow from \cref{RH,ode}.
It is straightforward to check that $ 0<g_i(x),h_i(x)<1 $ and $ g_i'(x),h_i'(x) >0 $  for $ i=1,3. $

Now we are ready to \textbf{construct the ansatz}. Let $ \X(t), \Y(t), \Z(t) $ be three $ C^1 $ curves on $ [0,+\infty) $ and $ \sigma\in\R $  be a constant, all of which will be determined later. 

Set
\begin{equation}\label{ansatz-a}
\begin{aligned}
v^\sharp(x,t) :=~ & v_l(x,t) \left[ 1- \tau^1_{\X}(g_1)(x,t) \right] + \vb_m \left[ \tau^1_{\X}(g_1)(x,t) - \tau^3_{\X+\sigma}(g_3)(x,t) \right] \\
& + v_r(x,t) \tau^3_{\X+\sigma}(g_3)(x,t), \\
u^\sharp(x,t) :=~ & u_l(x,t) \left[ 1-\tau^1_{\Y}(g_1)(x,t)\right] + u_r(x,t) \tau^3_{\Y+\sigma}(g_3)(x,t), \\
E^\sharp(x,t) :=~ & E_l(x,t) \left[ 1- \tau^1_{\Z}(h_1)(x,t)\right] + \Eb_m \left[ \tau^1_{\Z}(h_1)(x,t) - \tau^3_{\Z+\sigma}(h_3)(x,t) \right] \\
& + E_r(x,t) \tau^3_{\Z+\sigma}(h_3)(x,t),
\end{aligned}
\end{equation}
and
\begin{equation}\label{theta-a}
\theta^\sharp(x,t):= \frac{\gamma-1}{R} \big[ E^\sharp - \frac{1}{2} \left(u^\sharp\right)^2 \big](x,t),
\end{equation} 
where the two shift operators $ \tau^1 $ and $ \tau^3 $ are defined as
\begin{align*}
	\tau^i_b(A)(x,t) := A(x-s_it-b(t)), \quad i=1,3,
\end{align*}
where $ s_i $ is the speed of i-shock, and $ A=A(x) $ and $ b=b(t) $ are any measurable functions.
As in \cite{Liu1985,Huang2009}, for general perturbations of viscous shocks, one should consider a diffusion wave propagating along the second family of characteristics $ r_2 = \big(1, 0, \frac{\pb_m}{\gamma-1}\big)^T. $
Let $ \eta \in \R $ be a constant to be determined later. 

Set the \textit{ansatz} as
\begin{equation}\label{ansatz-tilde}
	\begin{aligned}
		\vt :=& v^{\sharp} +\Theta, \quad 
		\ut := u^{\sharp}+ a \p_x \Theta, \quad
		\Et := E^{\sharp} +\frac{\pb_m}{\gamma-1} \Theta,
	\end{aligned}
\end{equation}
where $ \Theta(x,t) =\frac{\eta}{\sqrt{4\pi a(1+t)}}e^{-\frac{x^2}{4a(1+t)}} $ is a smooth diffusion wave, satisfying
\begin{align}
	& \p_t \Theta =a\p_x^2\Theta \quad \text{with} \quad a=\frac{(\gamma-1)\kappa}{\gamma R \vb_m}>0, \quad \int_\R \Theta(x,t) dx \equiv \eta. \label{Thet}
\end{align} 
Note that the ansatz \cref{ansatz-tilde} tends to the periodic solutions $\left( v_{l,r}, u_{l,r}, E_{l,r}\right)$ as $x \to \mp\infty $ for all $ t\geq 0. $ 
For later use, let
\begin{align}
	\thetat := & \frac{\gamma-1}{R} \big(\Et-\frac{1}{2}\ut^2\big) = \theta^{\sharp}+\frac{\pb_m}{R}\Theta -\frac{a (\gamma-1)}{R} \Big(\frac{a\abs{\p_x\Theta}^2}{2}+u^\sharp \p_x\Theta \Big), \label{thet} \\
	\pt := & \frac{R\thetat}{\vt} = p^\sharp - \frac{\Theta}{\vt} \left(p^\sharp-\pb_m\right) - \frac{a(\gamma-1)}{\vt} \Big(\frac{a \abs{\p_x\Theta}^2}{2}+u^\sharp \p_x\Theta \Big). \label{pt}
\end{align}

\vspace{0.2cm}

The whole remaining part of this subsection is devoted to determining the parameters of the ansatz, namely, the curves $ \X(t), \Y(t), \Z(t) $ and numbers $ \sigma, \eta, $ 
so that the anti-derivative method is available (\cref{Lem-variables}).
Once they are determined, we can state the main result of this paper, \cref{Thm}, which is placed in the next subsection.  
 
By plugging the ansatz $ (v^\sharp, u^\sharp, E^\sharp) $ into \cref{NS} with direct calculations, one can arrive at
\begin{equation}\label{eq-ansatz}
\begin{cases}
\p_t v^\sharp - \p_x  u^\sharp = \p_x F_{1,1} + f_{1,2} + \X' f_{1,3}, & \\
\p_t  u^\sharp + \p_x p(v^\sharp,\theta^\sharp) - \mu \p_x \big( \frac{\p_x u^\sharp}{v^\sharp} \big) = \p_x F_{2,1} + f_{2,2}+ \Y' f_{2,3}, & \\
\p_t E^\sharp + \p_x \big( p(v^\sharp, \theta^\sharp) u^\sharp \big) - \kappa \big( \frac{\p_x \theta^\sharp}{v^\sharp} \big) - \mu \p_x \big(\frac{u^\sharp \p_x u^\sharp}{v^\sharp}\big) = \p_x F_{3,1} + f_{3,2}+\Z' f_{3,3},
\end{cases}
\end{equation}
where
\begin{equation}\label{source-1}
\begin{cases}
F_{1,1} = u_l \left[ \tau^1_{\Y} (g_1) - \tau^1_{\X} (g_1) \right] - u_r \left[ \tau^3_{\Y+\sigma} (g_3) - \tau^3_{\X+\sigma} (g_3) \right], \\
f_{1,2} = \left[ s_1(v_l - \vb_m) + u_l \right] \tau^1_{\X} (g_1') - \left[ s_3(v_r - \vb_m) + u_r \right] \tau^3_{\X+\sigma} (g_3'), \\
f_{1,3}  = (v_l - \vb_m) \tau^1_{\X} (g_1') - (v_r - \vb_m) \tau^3_{\X+\sigma} (g_3'),
\end{cases}
\end{equation}
\begin{equation}\label{source-2}
\begin{cases}
F_{2,1} = p(v^\sharp,\theta^\sharp) - p(v_l,\theta_l) \left[ 1-\tau^1_{\Y} (g_1) \right]  - p(v_r,\theta_r) \tau^3_{\Y+\sigma} (g_3) \\  
\qquad \quad -\mu\big[\frac{\p_x u^\sharp}{v^\sharp} - \frac{\p_x u_l}{v_l} ( 1-\tau^1_{\Y} (g_1) ) - \frac{\p_x u_r}{v_r} \tau^3_{\Y+\sigma} (g_3)\big], \\
f_{2,2} = \big[ s_1 u_l - p(v_l,\theta_l) + \mu \frac{\p_x u_l}{v_l}  \big] \tau^1_{\Y} (g_1')  \\
\qquad\quad - \big[ s_3 u_r - p(v_r,\theta_r) + \mu \frac{\p_x u_r}{v_r} \big] \tau^3_{\Y+\sigma}(g_3'), \\
f_{2,3} = u_l \tau^1_{\Y} (g_1') - u_r \tau^3_{\Y+\sigma} (g_3'), 
\end{cases}
\end{equation}
\begin{equation}\label{source-3}
\begin{cases}
F_{3,1} =  p(v^\sharp, \theta^\sharp) u^\sharp - p(v_l, \theta_l) u_l \left[1-\tau^1_{\Z}(h_1) \right] - p(v_r, \theta_r) u_r \tau^3_{\Z+\sigma}(h_3) \\
\qquad - \kappa \big[ \frac{\p_x \theta^\sharp}{v^\sharp} - \frac{\p_x \theta_l}{v_l} \left( 1- \tau^1_{\Z}(h_1) \right) - \frac{\p_x \theta_r}{v_r} \tau^3_{\Z+\sigma}(h_3) \big] \\
\qquad - \mu \big[ \frac{u^\sharp \p_x u^\sharp}{v^\sharp} - \frac{u_l\p_x u_l}{v_l} \left( 1- \tau^1_{\Z}(h_1) \right) - \frac{u_r\p_x u_r}{v_r} \tau^3_{\Z+\sigma}(h_3) \big], \\
f_{3,2} = \big[ s_1 (E_l-\Eb_m) - p(v_l,\theta_l)u_l + \kappa\frac{\p_x\theta_l}{v_l} + \mu \frac{u_l\p_x u_l}{v_l} \big] \tau^1_{\Z}(h_1') \\
\qquad - \big[ s_3(E_r-\Eb_m) - p(v_r,\theta_r)u_r + \kappa\frac{\p_x\theta_r}{v_r} + \mu \frac{u_r \p_x u_r}{v_r} \big] \tau^3_{\Z+\sigma}(h_3'), \\
f_{3,3} = (E_l-\Eb_m) \tau^1_{\Z}(h_1') -  (E_r-\Eb_m) \tau^3_{\Z+\sigma}(h_3').
\end{cases}
\end{equation}
It is noted that since $ (v^\sharp,u^\sharp,E^\sharp,\theta^\sharp) $ tends to $ (v_{l,r},u_{l,r},E_{l,r},\theta_l) $ as $ x\rightarrow \mp\infty, $  one can verify easily that each $ F_{i,1}(x,t) ~(i=1,2,3) $ vanishes as $ \abs{x} \to \infty $ for all $ t\geq 0. $ To make the system \cref{eq-ansatz} as a conservative form, the curves $ \X, \Y $ and $ \Z $ should satisfy
\begin{equation}\label{ode-shift}
\begin{aligned}
\X'(t) = - \frac{ \int_\R f_{1,2}(x,t) dx }{ \int_\R f_{1,3}(x,t) dx }, ~
\Y'(t) = - \frac{ \int_\R f_{2,2}(x,t) dx }{ \int_\R f_{2,3}(x,t) dx }, ~ \Z'(t) = - \frac{ \int_\R f_{3,2}(x,t) dx }{ \int_\R f_{3,3}(x,t) dx },
\end{aligned}
\end{equation}
where the denominators in \cref{ode-shift} are away from zero if the initial periodic perturbations $ \left(\phi_{0i}, \psi_{0i}, w_{0i} \right)~(i=l,r) $ are small (see \cref{Lem-periodic}). The curves $ \X,\Y $ and $ \Z $ can be uniquely determined as long as the corresponding initial data $ \X_0,\Y_0 $ and $ \Z_0 $ are given. More precisely, it holds that 
\begin{Lem}\label{Lem-shift}
	Assume that  \cref{RH,zero-ave} hold. Then there exists an $ \e_0>0 $ such that if
	\begin{equation*}
    \sum_{i=l,r} \norm{\phi_{0i}, \psi_{0i}, w_{0i} }_{H^3((0,\pi_i))}<\e < \e_0,
	\end{equation*} 
	then given any constant triple $ \left(\X_0, \Y_0, \Z_0\right), $ there exists a unique solution $ (\X, \Y, \Z)(t) \in C^1[0,+\infty) $ to the system \cref{ode-shift} with the initial data $ (\X, \Y,\Z)(0)=(\X_0, \Y_0,\Z_0), $ satisfying that
	\begin{equation*}
	\abs{\left(\X',\Y',\Z'\right)(t)} + \abs{\left(\X,\Y,\Z\right)(t)-\left(\X_\infty, \Y_\infty, \Z_\infty \right)} \leq C\e e^{-2\alpha t}, \qquad t\geq 0,
	\end{equation*}	
	where the constant $ \alpha>0 $ is independent of $ \e $ and $ t. $ Moreover, the corresponding constant locations $ \X_\infty, \Y_\infty, \Z_\infty $ can be computed (in terms of 
	the constants $ \sigma, \X_0,\Y_0,\Z_0 $ and the periodic solutions \cref{sol-peri}) as follows,
	\begin{align}
		\X_\infty =~& \X_0 + 
		\frac{1}{\vb_r-\vb_l} \Bigg\{
		\int_{-\infty}^{0} \left[ \phi_{0l}(x) g_1(x-\X_0) - \phi_{0r}(x) g_3(x-\X_0-\sigma) \right] dx  \label{X-inf} \\
		& - \int_{0}^{+\infty} \left[ \phi_{0l}(x) \left(1-g_1(x-\X_0)\right) - \phi_{0r}(x) \left(1-g_3(x-\X_0-\sigma)\right) \right] dx \notag \\
		& + \frac{1}{\pi_l} \int_{0}^{\pi_l} \int_{0}^{x} \phi_{0l}(y)dy dx - \frac{1}{\pi_r} \int_{0}^{\pi_r} \int_{0}^{x} \phi_{0r}(y)dy dx \Bigg\} \notag \\
		:=~ & H_1(\X_0,\sigma), \notag
	\end{align}
	\begin{align}
	\Y_\infty =~ & \Y_0 +	
	\frac{1}{\ub_r-\ub_l} \Bigg\{
	\int_{-\infty}^{0} \left[ \psi_{0l}(x) g_1(x-\Y_0) - \psi_{0r}(x) g_3(x-\Y_0-\sigma) \right] dx \label{Y-inf} \\
	& - \int_{0}^{+\infty} \left[ \psi_{0l}(x) \left(1-g_1(x-\Y_0)\right) - \psi_{0r}(x) \left(1-g_3(x-\Y_0-\sigma)\right) \right] dx \notag \\
	& + \frac{1}{\pi_l} \int_{0}^{\pi_l} \int_{0}^{x} \psi_{0l}(y)dydx - \int_{0}^{+\infty} \frac{1}{\pi_l} \int_{0}^{\pi_l} \left[p(v_l, \theta_l)-p(\vb_l,\thetab_l)\right] dx dt \notag \\ 
	& -\frac{1}{\pi_r} \int_{0}^{\pi_r} \int_{0}^{x} \psi_{0r}(y)dydx  + \int_{0}^{+\infty} \frac{1}{\pi_r} \int_{0}^{\pi_r} \left[p(v_r,\theta_r)-p(\vb_r,\thetab_r)\right] dx dt \notag \\
	& + \frac{\mu}{\pi_l} \int_{0}^{\pi_l} \big[\log\left(\vb_l+\phi_{0l} \right)-\log(\vb_l)\big] dx - \frac{\mu}{\pi_r} \int_{0}^{\pi_r} \big[\log\left(\vb_r+\phi_{0r} \right)-\log(\vb_r)\big] dx \Bigg\} \notag \\
	:=~ & H_2(\Y_0, \sigma),  \notag
	\end{align}
	and
	\begin{align}
		\Z_\infty =~& \Z_0 + \frac{1}{\Eb_r-\Eb_l} \Bigg\{
		\int_{-\infty}^{0} \left[ w_{0l}(x) h_1(x-\Z_0) - w_{0r}(x) h_3(x-\Z_0-\sigma) \right] dx \label{Z-inf} \\
		& - \int_{0}^{+\infty} \left[ w_{0l}(x) \left(1-h_1(x-\Z_0)\right) - w_{0r}(x) \left(1-h_3(x-\Z_0-\sigma)\right) \right] dx \notag \\
		& + \frac{1}{\pi_l} \int_{0}^{\pi_l} \int_{0}^{x} w_{0l}(y)dy dx - \frac{1}{\pi_r} \int_{0}^{\pi_r} \int_{0}^{x} w_{0r}(y)dy dx \notag \\
		& + \int_0^{+\infty} \frac{1}{\pi_l} \int_0^{\pi_l} \Big[ \kappa \frac{\p_x\theta_l}{v_l} + \mu \frac{u_l\p_xu_l}{v_l} - \big( p(v_l,\theta_l) u_l - p(\vb_l,\thetab_l) \ub_l \big) \Big] dx dt \notag \\
		& - \int_0^{+\infty} \frac{1}{\pi_r} \int_0^{\pi_r} \Big[ \kappa \frac{\p_x\theta_r}{v_r} + \mu \frac{u_r\p_x u_r}{v_r} - \big( p(v_r,\theta_r) u_r - p(\vb_r,\thetab_r) \ub_r \big) \Big] dx dt
		\Bigg\}, \notag \\
		:=~& H_3(\Z_0,\sigma).  \notag
	\end{align}
\end{Lem}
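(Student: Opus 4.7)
The three ODEs in \cref{ode-shift} are decoupled: $\X'$ depends only on $\X$ and $t$, and similarly for $\Y', \Z'$. By \cref{Lem-periodic}, the periodic solutions $(v_{l,r}, u_{l,r}, E_{l,r})$ are globally defined and their perturbations \cref{phi-psi-lr} satisfy $\norm{(\phi_{l,r}, \psi_{l,r}, w_{l,r})}_{H^3((0,\pi_{l,r}))} \leq C\e e^{-2\alpha t}$. Combining this with $\int_\R g_i'(y)\,dy = 1 = \int_\R h_i'(y)\,dy$ and \cref{Lem-shocks}, the denominators in \cref{ode-shift} compute to
\begin{equation*}
\int_\R f_{1,3}\,dx = \vb_l - \vb_r + O(\e e^{-2\alpha t}), \quad \int_\R f_{2,3}\,dx = \ub_l - \ub_r + O(\e e^{-2\alpha t}), \quad \int_\R f_{3,3}\,dx = \Eb_l - \Eb_r + O(\e e^{-2\alpha t}),
\end{equation*}
with leading constants nonzero by \cref{strength,strengh-1,uniform}, so denominators stay bounded away from zero for small $\e$. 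The Rankine-Hugoniot relations \cref{RH} together with $\ub_m = 0$ (\cref{ub-m}) (e.g., $s_1(\vb_l - \vb_m) + \ub_l = 0$ and its analogue for the 3-shock) cancel the constant-state contributions to $\int_\R f_{i,2}\,dx$, leaving expressions linear in the periodic perturbations so $|\int_\R f_{i,2}\,dx| \leq C\e e^{-2\alpha t}$. The right-hand sides of \cref{ode-shift} are thus $C^1$ in the unknown and uniformly bounded by $C\e e^{-2\alpha t}$; Picard-Lindel\"of yields local existence and uniqueness, and the uniform bound extends the solutions to $[0,+\infty)$ with $|(\X', \Y', \Z')(t)| \leq C\e e^{-2\alpha t}$, whence $(\X, \Y, \Z)(t) \to (\X_\infty, \Y_\infty, \Z_\infty)$ at rate $e^{-2\alpha t}$.

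\textbf{Derivation of $H_1$.} Writing $D_1(t)\X'(t) = -N_1(t)$ with $D_1(t) = \vb_l - \vb_r + O(\e e^{-2\alpha t})$ and $N_1 := \int_\R f_{1,2}\,dx$, the task (modulo quadratic errors) is to evaluate $\int_0^\infty N_1(t)\,dt$ in closed form. Using the conservation law $\p_t \phi_l = \p_x \psi_l$, I derive the key identity
\begin{equation*}
\int_\R [s_1\phi_l + \psi_l]\,\tau^1_\X(g_1')\,dx = \frac{d}{dt}\int_\R \Phi_l\,\tau^1_\X(g_1')\,dx + \psi_l(0,t) - \X'(t)\int_\R \phi_l\,\tau^1_\X(g_1')\,dx,
\end{equation*}
where $\Phi_l(x,t) := \int_0^x \phi_l(y,t)\,dy$ is periodic in $x$ by \cref{zero-ave}. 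Integrating in $t$ from $0$ to $\infty$ and using $\phi_l(\cdot,\infty) = 0$: the total-derivative term contributes $-\int_\R \Phi_{0l}(x)\,g_1'(x-\X_0)\,dx$; splitting this at $x=0$ and integrating by parts in $x$ on $(-\infty,0)$ and $(0,+\infty)$ (using $\Phi_{0l}(0) = 0$ and exponential decay of $g_1$ at $-\infty$ and $1-g_1$ at $+\infty$) produces the first two $\R^\pm$ integrals in \cref{X-inf}. The time integral $\int_0^\infty \psi_l(0,t)\,dt$ is evaluated by integrating $\p_t \Phi_l = \psi_l(\cdot,t) - \psi_l(0,t)$ over one period in $x$ and using zero-mean of $\psi_l$: this yields $\int_0^\infty \psi_l(0,t)\,dt = \frac{1}{\pi_l}\int_0^{\pi_l}\int_0^x \phi_{0l}(y)\,dy\,dx$. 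Subtracting the analogous 3-shock contribution produces $H_1(\X_0,\sigma)$.

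\textbf{Formulas for $H_2, H_3$ and main obstacle.} The same scheme is applied to the $u$- and $E$-equations, whose fluxes now include pressure $p(v_{l,r},\theta_{l,r}) - p(\vb_{l,r},\thetab_{l,r})$, viscous stress $\mu\p_x u_{l,r}/v_{l,r}$, and heat conduction $\kappa \p_x \theta_{l,r}/v_{l,r}$. The crucial identity $\mu \p_x u_l/v_l = \p_t(\mu \log v_l)$ produces the $\log$-terms in \cref{Y-inf} directly upon time integration, while the remaining pressure and heat-flux terms give rise to the $\int_0^\infty(\cdots)\,dt$ contributions in \cref{Y-inf,Z-inf}. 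The hard part is the bookkeeping: tracking all flux terms in the momentum and energy equations (pressure, viscous stress, heat flux) through the integration-by-parts scheme above, and ensuring that the quadratic remainders arising from the denominator expansion $D_i(t) - D_i(\infty) = O(\e e^{-2\alpha t})$ and from the $\X'\int\phi_l\,\tau^1_\X(g_1')$-type cross terms are consistently accounted for so that the final closed-form expressions agree exactly with \cref{X-inf,Y-inf,Z-inf}.
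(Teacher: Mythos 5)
Your proposal is correct in spirit and takes a genuinely different route from the paper. The paper derives \cref{X-inf,Y-inf,Z-inf} by integrating the conservation law $\p_t v^\sharp - \p_x u^\sharp = \p_x F_{1,1} + f_{1,2} + \X' f_{1,3}$ over a carefully chosen space--time region $\Omega_y^N(t)$ whose lateral boundaries travel along $s_1 t + \X(t)$ and $s_3 t + \X(t) + \sigma$ offset by integer multiples of the periods, then averages over the fractional offset $y\in[0,1]$ and sends $N\to\infty$; the period-averaging kills the oscillatory boundary contributions, and the lateral flux integrals produce the terms in $H_1$. You instead manipulate the ODE for $\X$ directly, which is leaner: your key identity converts $\int_\R(s_1\phi_l+\psi_l)\tau^1_\X(g_1')\,dx$ into a total time derivative plus the boundary value $\psi_l(0,t)$ plus a term $-\X'\int\phi_l\tau^1_\X(g_1')$, and your period-averaged evaluation of $\int_0^\infty\psi_l(0,t)\,dt$ (using $\p_t\Phi_l=\psi_l(x,t)-\psi_l(0,t)$) is exactly the device that reproduces the $\frac{1}{\pi_l}\int_0^{\pi_l}\int_0^x\phi_{0l}$ terms. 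The splitting at $x=0$ before integrating $\int_\R\Phi_{0l}g_1'\,dx$ by parts is essential and you handled it correctly, since $\Phi_{0l}g_1$ does not vanish at $+\infty$.

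The one place you undersell the argument is the opening of the second paragraph, where you frame the task as computing $\int_0^\infty N_1(t)\,dt$ ``modulo quadratic errors.'' The formulas in the lemma are exact, and your own identity already shows why: when you substitute
\begin{equation*}
N_1 = \frac{d}{dt}\int_\R\Phi_l\,\tau^1_\X(g_1')\,dx - \frac{d}{dt}\int_\R\Phi_r\,\tau^3_{\X+\sigma}(g_3')\,dx + \psi_l(0,t)-\psi_r(0,t) - \X'\Big(\int_\R\phi_l\,\tau^1_\X(g_1')\,dx - \int_\R\phi_r\,\tau^3_{\X+\sigma}(g_3')\,dx\Big)
\end{equation*}
into $D_1\X' = -N_1$ with $D_1 = (\vb_l-\vb_r) + \big(\int\phi_l\,\tau^1_\X(g_1') - \int\phi_r\,\tau^3_{\X+\sigma}(g_3')\big)$, the $\X'$-cross-terms on both sides cancel identically, leaving the exact linear relation
\begin{equation*}
(\vb_l - \vb_r)\,\X'(t) = -\frac{d}{dt}\int_\R\Phi_l\,\tau^1_\X(g_1')\,dx + \frac{d}{dt}\int_\R\Phi_r\,\tau^3_{\X+\sigma}(g_3')\,dx - \psi_l(0,t) + \psi_r(0,t),
\end{equation*}
which integrates in $t$ to give $H_1$ with no quadratic remainder at all. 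You flag this cancellation in the last paragraph as a concern to be ``consistently accounted for,'' but it should be exhibited explicitly since it is precisely what turns your computation from an approximate one into a proof of the exact formula. The same cancellation occurs for $\Y$ and $\Z$ because the denominator $\int f_{i,3}\,dx$ always contains the same periodic-perturbation integrals that appear multiplied by the shift derivative in $\int f_{i,2}\,dx$. With that step made explicit your scheme is complete and, in my view, cleaner than the paper's domain integration.
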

Note that due to \cref{Lem-periodic}, all the integrals in \cref{X-inf,Y-inf,Z-inf} are bounded, thus $ \X_\infty, \Y_\infty $ and $ \Z_\infty $ are well-defined.
Since the proof of \cref{Lem-shift} is similar to that in \cite{Xin2019,HY1shock}, we place it in the last \cref{Sec-shift-F} for easy reading.

\vspace{0.2cm}

Define the composite wave of 1- viscous shock and 3- viscous shock with the corresponding shifts $ b=b(t) $ and $ d=d(t) $ as 
\begin{equation}\label{shocks}
	\begin{aligned}
		v^S_{(b,d)} & := \tau^1_b\left(v_1^S\right) -\vb_m + \tau^3_d\left(v_3^S\right), \\
		u^S_{(b,d)} & := \tau^1_b\left(u_1^S\right) + \tau^3_d\left(u_3^S\right), \\
		E^S_{(b,d)} & := \tau^1_b\left(E_1^S\right) -\Eb_m + \tau^3_d\left(E_3^S\right), \\
		\theta_{(b,d)}^S &:= \frac{\gamma-1}{R} \Big[E^S_{(b,d)} - \frac{1}{2} \left(u^S_{(b,d)}\right)^2\Big] \\
		&=\tau^1_b\left(\theta_1^S\right)-\thetab_m +\tau^3_d\left(\theta_3^S\right) - \frac{\gamma-1}{R} \tau^1_b\left(u_1^S\right)\tau^3_d\left(u_3^S\right).
	\end{aligned}
\end{equation}
For convenience, we omit the lower index $ (b,d) $ above when $ b=d\equiv 0. $
Moreover, we denote $ A\sim B $ when
\begin{equation}\label{rel-1}
	\norm{A-B}_{L^\infty(\R)} \leq C \e e^{-\alpha t } + C\delta^{\frac{3}{2}} e^{-c_1\delta t}, \quad t>0
\end{equation}
holds, and denote $ A\approx B $ as in \cite{Huang2009}, when the pointwise estimate
\begin{equation}\label{rel-2}
	\abs{A-B} \leq C \big(\delta^2+ \abs{\eta} \delta^{\frac{3}{2}}\big) e^{-c \delta t-c \delta \abs{x}} + C \frac{ \abs{\eta}}{(1+t)^{\frac{3}{2}}} e^{-\frac{c\abs{x}^2}{1+t}} + C(\delta+ \abs{\eta} ) e^{-c t-c \abs{x}}
\end{equation}
holds.
Thus, by Lemmas \ref{Lem-periodic} and \ref{Lem-shift} and \cref{cross}, the functions given in \cref{ansatz-a} satisfies
\begin{align*}
v^\sharp(x,t) & \sim v_1^S(x-s_1t-\X_\infty)+v_3^S(x-s_3t-\X_\infty-\sigma)-\vb_m = v^S_{(\X_\infty, \X_\infty+\sigma)}(x,t), \\
u^\sharp(x,t) & \sim u_1^S(x-s_1t-\Y_\infty)+ u_3^S(x-s_3t-\Y_\infty-\sigma) =  u^S_{(\Y_\infty, \Y_\infty+\sigma)}(x,t), \\
E^\sharp(x,t) & \sim E_1^S(x-s_1t-\Z_\infty)+ E_3^S(x-s_3t-\Z_\infty-\sigma)-\Eb_m= E^S_{(\Z_\infty, \Z_\infty+\sigma)}(x,t),
\end{align*}

\textbf{Constraints from coinciding limits.}
From \cite{Huang2009}, it is plausible to require $ \X_\infty = \Y_\infty = \Z_\infty, $ denoted by $ \xi. $ Otherwise, neither
\begin{align*}
\left( v_1^S(x-s_1t-\X_\infty), u_1^S(x-s_1t-\Y_\infty), E^S_1(x-s_1t-\Z_\infty) \right)
\end{align*}
nor
\begin{align*}
\left( v_3^S(x-s_3t-\X_\infty-\sigma), u_3^S(x-s_3t-\Y_\infty-\sigma), E^S_3(x-s_3t-\Z_\infty-\sigma) \right)
\end{align*}
is a traveling wave solution to \cref{NS}.
Thus, by \cref{Lem-shift}, one has three constraints on the five free variables $ \xi,\sigma, \X_0,\Y_0 $ and $ \Z_0 $ as
\begin{equation}\label{constrain-1}
\xi = H_1(\X_0, \sigma) = H_2(\Y_0,\sigma) = H_3(\Z_0,\sigma).
\end{equation}
Under the condition \cref{constrain-1}, it follows from Lemmas \ref{Lem-shocks} and \ref{Lem-shift} that
\begin{equation}\label{equiv}
\begin{aligned}
v^\sharp & \sim v^S_{(\X,\X+\sigma)} \sim v^S_{(\Y,\Y+\sigma)} \sim v^S_{(\Z,\Z+\sigma)} \sim v^S_{(\xi,\xi+\sigma)}, \\
u^\sharp & \sim u^S_{(\X,\X+\sigma)} \sim u^S_{(\Y,\Y+\sigma)} \sim u^S_{(\Z,\Z+\sigma)} \sim u^S_{(\xi,\xi+\sigma)}, \\
E^\sharp & \sim E^S_{(\X,\X+\sigma)} \sim E^S_{(\Y,\Y+\sigma)} \sim E^S_{(\Z,\Z+\sigma)} \sim E^S_{(\xi,\xi+\sigma)}, \\
\theta^\sharp & \sim \theta^S_{(\X,\X+\sigma)} \sim \theta^S_{(\Y,\Y+\sigma)} \sim \theta^S_{(\Z,\Z+\sigma)} \sim \theta^S_{(\xi,\xi+\sigma)}.
\end{aligned} \qquad \text{for all } t\geq 0,
\end{equation}


\textbf{Constraints from zero masses.}
From the equations \cref{NS}, \cref{Lem-F,Thet}, one can get that the perturbations $ v-\vt, u-\ut $ and $ E-\Et $ carry zero masses for all $ t > 0, $ as long as their initial data satisfy
\begin{equation}\label{0mass-ic}
\int_\R \left(v-\vt\right)(x,0) dx = 0, \quad \int_\R \left(u-\ut\right)(x,0) dx = 0, \quad \int_\R \big(E-\Et\big)(x,0) dx =0.
\end{equation} 
Then by direct calculations, the first identity in \cref{0mass-ic} gives that
\begin{align}
\eta & = \int_\R \left(v_0(x)-v^\sharp(x,0)\right) dx \notag \\
& = \int_\R \big[ v_0(x) - v_1^S(x-\X_0) - v_3^S(x-\X_0-\sigma) + \vb_m \notag \\
& \quad - \phi_{0l}(x) \left(1-g_1(x-\X_0)\right) - \phi_{0r}(x) g_3(x-\X_0-\sigma) \big] dx \notag \\
& = \int_\R \left[v_1^S(x)-v_1^S(x-\X_0) \right] dx + \int_\R \left[v_3^S(x)-v_3^S(x-\X_0-\sigma)\right] dx  \notag \\
& \quad + \int_{-\infty}^0 \left( v_0-v^S-\phi_{0l} \right)(x) dx + \int_0^{+\infty} \left( v_0-v^S-\phi_{0r} \right)(x) dx \notag \\
& \quad + \int_{-\infty}^0 \left[ \phi_{0l}(x) g_1(x-\X_0) - \phi_{0r}(x) g_3(x-\X_0-\sigma) \right] dx \notag \\
& \quad - \int_0^{+\infty} \left[ \phi_{0l}(x) (1-g_1(x-\X_0)) - \phi_{0r}(x) \left(1-g_3(x-\X_0-\sigma\right) \right] dx. \notag
\end{align}
This, together with \cref{X-inf}, yields that
\begin{align}
\eta =~& (\vb_m-\vb_l) \X_0 + (\vb_r-\vb_m)(\X_0+\sigma) + \int_{-\infty}^0 \left(v_0-v^S-\phi_{0l} \right)(x) dx \notag \\
& + \int_0^{+\infty} \left(v_0-v^S-\phi_{0r} \right)(x) dx + (\vb_r-\vb_l)(\xi-\X_0) \notag \\
& - \frac{1}{\pi_l} \int_0^{\pi_l}\int_0^x \phi_{0l}(y)dydx + \frac{1}{\pi_r}\int_0^x \int_0^{\pi_r} \phi_{0r}(y)dydx \notag \\
=~ & \left(\vb_r-\vb_l\right) \xi + \left(\vb_r-\vb_m\right) \sigma + \C_1,  \label{0mass-1}
\end{align}
where the constant $ \C_1 $ denotes the sum of the four integrals, which is independent of the six variables, $ \X_0,\Y_0,\Z_0,\sigma,\xi $ and $ \eta. $ By similar calculations, it follows from \cref{Y-inf,Z-inf} that
\begin{align}
0 =~& \int_\R \left(u_0(x)-u^\sharp(x,0)\right) dx = \left(\ub_r-\ub_l\right) \xi + \left(\ub_r-\ub_m\right)\sigma + \C_2, \label{0mass-2} \\
\frac{\pb_m}{\gamma-1} \eta =~& \int_\R \left(E_0(x)-E^\sharp(x,0)\right) dx = \left(\Eb_r-\Eb_l\right) \xi + \left(\Eb_r-\Eb_m\right)\sigma + \C_3, \label{0mass-3}
\end{align}
where
\begin{align*}
\C_2 = & \int_{-\infty}^0 \left(u_0-u^S-\psi_{0l}\right)(x) dx + \int_0^{+\infty} \left(u_0-u^S-\psi_{0r}\right) (x) dx  \\
& - \frac{1}{\pi_l} \int_{0}^{\pi_l} \int_{0}^{x} \psi_{0l}(y)dydx + \int_{0}^{+\infty} \frac{1}{\pi_l} \int_{0}^{\pi_l} \left[p(v_l, \theta_l)-p(\vb_l,\thetab_l)\right] dx dt \\ 
& + \frac{1}{\pi_r} \int_{0}^{\pi_r} \int_{0}^{x} \psi_{0r}(y)dydx - \int_{0}^{+\infty} \frac{1}{\pi_r} \int_{0}^{\pi_r} \left[p(v_r,\theta_r)-p(\vb_r,\thetab_r)\right] dx dt \\
& - \frac{\mu}{\pi_l} \int_{0}^{\pi_l} \big[\log\left(\vb_l+\phi_{0l} \right)-\log(\vb_l)\big] dx + \frac{\mu}{\pi_r} \int_{0}^{\pi_r} \big[\log\left(\vb_r+\phi_{0r} \right)-\log(\vb_r)\big] dx, \\
\C_3 =& \int_{-\infty}^0 \left(E_0-E^S-w_{0l}\right)(x) dx + \int_0^{+\infty} \left(E_0-E^S-w_{0r}\right) (x) dx \\
& - \frac{1}{\pi_l} \int_{0}^{\pi_l} \int_{0}^{x} w_{0l}(y)dy dx + \frac{1}{\pi_r} \int_{0}^{\pi_r} \int_{0}^{x} w_{0r}(y)dy dx \\
& - \int_0^{+\infty} \frac{1}{\pi_l} \int_0^{\pi_l} \Big[ \kappa \frac{\p_x\theta_l}{v_l} + \mu \frac{u_l\p_xu_l}{v_l} - \Big( p(v_l,\theta_l) u_l - p(\vb_l,\thetab_l) \ub_l \Big) \Big](x,t) dx dt \\
& + \int_0^{+\infty} \frac{1}{\pi_r} \int_0^{\pi_r} \Big[ \kappa \frac{\p_x\theta_r}{v_r} + \mu \frac{u_r\p_x u_r}{v_r} - \Big( p(v_r,\theta_r) u_r - p(\vb_r,\thetab_r) \ub_r \Big) \Big](x,t) dx dt,
\end{align*}
both of which are independent of $ \X_0,\Y_0,\Z_0,\sigma,\xi $ and $ \eta. $

Collecting \cref{constrain-1,0mass-1,0mass-2,0mass-3}, the six free variables $ \X_0,\Y_0,\Z_0,\sigma,\xi $ and $ \eta $ should satisfy the following six equalities,
\begin{equation}\label{constrain}
\begin{cases}
\xi- H_1(\X_0,\sigma) = 0, \\
\xi- H_2(\Y_0,\sigma) = 0, \\
\xi- H_3(\Z_0,\sigma) = 0, \\
(\vb_r - \vb_l) \xi + (\vb_r-\vb_m)\sigma -\eta + \C_1 = 0, \\
\left(\ub_r-\ub_l\right) \xi + \left(\ub_r-\ub_m\right)\sigma + \C_2 = 0, \\
\left(\Eb_r-\Eb_l\right) \xi + \left(\Eb_r-\Eb_m\right)\sigma - \frac{\pb_m}{\gamma-1} \eta + \C_3 =0.
\end{cases}
\end{equation}
By \cref{e,Lem-periodic}, the constants $ \C_1, \C_2 $ and $ \C_3 $ satisfy 
\begin{equation}\label{small-C}
	\abs{\C_1} + \abs{\C_2} + \abs{\C_3} \leq C\e.
\end{equation} 
One can compute the Jacobian determinant of the system \cref{constrain} as
\begin{align}
& \text{det} \left[\begin{matrix}
\p_{\X_0} H_1 & 0 & 0 & \p_\sigma H_1 & -1 & 0 \\
0 & \p_{\Y_0} H_2 & 0 & \p_\sigma H_2 & -1 & 0 \\
0 & 0 & \p_{\Z_0} H_3 & \p_\sigma H_3 & -1 & 0 \\
0&0&0& \vb_r-\vb_m & \vb_r-\vb_l & -1 \\
0 &0&0 & \ub_r-\ub_m & \ub_r-\ub_l &0 \\
0 &0&0 & \Eb_r-\Eb_m & \Eb_r-\Eb_l & - \frac{\pb_m}{\gamma-1} 
\end{matrix}
\right]  \notag \\
& \quad = \p_{\X_0} H_1 \cdot \p_{\Y_0} H_2 \cdot \p_{\Z_0} H_3 \cdot \text{det} \left[\begin{matrix}
\jump{v}_3 & \jump{v}_1+\jump{v}_3 & -1 \\
\jump{u}_3 & \jump{u}_1+\jump{u}_3 &0 \\
\jump{E}_3 & \jump{E}_1+\jump{E}_3 & - \frac{\pb_m}{\gamma-1} 
\end{matrix}
\right]. \label{matrix}
\end{align}
Recall that $ r_2 = \big(1,0,\frac{\pb_m}{\gamma-1}\big)^T $ is a 2-eigenvector.
And for weak $ i $-shock, the vector $ \left(\jump{v}_i,\jump{u}_i,\jump{E}_i\right)^T $ is close to be parallel to an $ i $-eigenvector for $ i=1,3. $ Thus, when the wave strengths $ \delta_1 $ and $\delta_3 $ are both small, the determinant of the $ 3\times3 $ matrix in \cref{matrix} is nonzero. On the other hand, all the derivatives $ \p_{\X_0} H_1, \p_{\Y_0} H_2 $ and $ \p_{\Z_0} H_3 $ are non-zero, if 
\begin{equation}\label{small}
	\sum\limits_{i=l,r} \norm{\phi_{0i}, \psi_{0i}, w_{0i} }_{H^3((0,\pi_i))} < \min\big\{\abs{\vb_r-\vb_l}, \abs{\ub_r-\ub_l}, \abs{\Eb_r-\Eb_l} \big\}.
\end{equation}
In fact, it follows from \cref{X-inf} that
\begin{equation}\label{p_H}
\begin{aligned}
	\p_{\X_0} H_1 & = 1 - \frac{1}{\vb_r-\vb_l} \int_\R \left[ \phi_{0l}(x) g_1'(x-\X_0) - \phi_{0r}(x) g_3'(x-\X_0-\sigma) \right] dx, 
\end{aligned}	
\end{equation}
which gives that 
\begin{align*}
\abs{\p_{\X_0} H_1 - 1} & \leq  \frac{1}{\abs{\vb_r-\vb_l}} \big(\norm{\phi_{0l}}_{L^\infty(\R)} +\norm{\phi_{0r}}_{L^\infty(\R)} \big).
\end{align*}
The proof of $ \p_{\Y_0} H_2 $ and $ \p_{\Z_0} H_3 $ are similar.
Therefore, with \cref{Lem-periodic}, we have the following lemma.
\begin{Lem}\label{Lem-variables}
	Assume that \cref{RH,ic-L1,uniform} hold and $ \left(\phi_{0l,0r}, \psi_{0l,0r}, w_{0l,0r} \right) \in H^3((0,\pi_{l,r})) $ satisfy \cref{zero-ave,small}.  Then there exist $ \delta_0>0 $ and $ \e_0>0 $ such that if $ \delta<\delta_0 $ and $ \e<\e_0, $  the system \cref{constrain} admits a unique solution $ (\X_0,\Y_0,\Z_0,\sigma,\xi,\eta) \in \R^6. $ 
	Moreover, it holds that $ \abs{\eta} \leq C \e. $
\end{Lem}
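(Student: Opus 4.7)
The plan is to apply the implicit function theorem to the system \cref{constrain}, viewed as $G(\X_0,\Y_0,\Z_0,\sigma,\xi,\eta)=0$ with $G\colon\R^6\to\R^6$. A natural base point is the trivial configuration corresponding to $\e=0$ and vanishing $\C_i$: indeed, when the periodic data vanish the formulas \cref{X-inf,Y-inf,Z-inf} give $H_i(\cdot,\sigma)=\mathrm{id}$ in its first argument, and with $\C_1=\C_2=\C_3=0$ the unique solution of \cref{constrain} is the origin. The goal is then to invoke the implicit function theorem at this point, for small $\e$ and small shock strength $\delta$, and read off the desired bound on $\eta$ from Lipschitz continuity of the inverse map.

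The central step is to check that the Jacobian $DG$ is invertible, which the paper has essentially set up through the factorization \cref{matrix}. I would verify the two factors separately. First, the scalar factor $\p_{\X_0}H_1\cdot\p_{\Y_0}H_2\cdot\p_{\Z_0}H_3$ is close to $1$: the explicit formula \cref{p_H} together with hypothesis \cref{small} yields $|\p_{\X_0}H_1-1|\leq C\e$, and the analogous bounds for the other two derivatives follow from the same calculation applied to \cref{Y-inf,Z-inf}. Second, the lower-right $3\times 3$ determinant does not vanish once $\delta$ is small. I would column-reduce the middle column (subtracting the first from it) so as to display the columns $(\jump{v}_1,\jump{u}_1,\jump{E}_1)^T$, $(\jump{v}_3,\jump{u}_3,\jump{E}_3)^T$ and $-r_2$, and then use that for weak shocks these first two columns equal $\alpha_1\, r_1(\vb_m,\thetab_m)+O(\delta_1^2)$ and $\alpha_3\, r_3(\vb_m,\thetab_m)+O(\delta_3^2)$ with $|\alpha_i|\sim\delta_i$. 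Since $\{r_1,r_2,r_3\}$ is a basis of $\R^3$, the leading-order contribution $\alpha_1\alpha_3\det[r_1,r_3,-r_2]$ is of order $\delta_1\delta_3\sim\delta^2$ and nonzero, which is exactly the assertion just after \cref{matrix}; using \cref{uniform} one pins the determinant from below.

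With $DG$ invertible, existence and uniqueness of $(\X_0,\Y_0,\Z_0,\sigma,\xi,\eta)$ follow by a standard contraction argument (or direct IFT) in a small ball around the base point, whose radius is controlled by $(\delta_0,\e_0)$. For the bound $|\eta|\leq C\e$ I would use the fact that the right-hand side perturbation away from the trivial solution consists of: (i) the constants $\C_1,\C_2,\C_3$, which are $O(\e)$ by \cref{small-C}; and (ii) the $O(\e)$ corrections of $H_i$ coming from the periodic data, as quantified in \cref{Lem-periodic,Lem-shift}. Applying the (bounded) inverse of $DG$ to these $O(\e)$ data gives the full estimate $|(\X_0,\Y_0,\Z_0,\sigma,\xi,\eta)|\leq C\e$, whence in particular $|\eta|\leq C\e$. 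The main subtlety I anticipate is tracking the interplay of the two small parameters $\e$ and $\delta$: the inverse of the $3\times 3$ block carries a factor of order $\delta^{-2}$, so one must fix $\delta<\delta_0$ first and only then shrink $\e_0$ (possibly depending on $\delta_0$) to make the contraction argument close and to absorb the $\delta$-dependence into the generic constant $C$.
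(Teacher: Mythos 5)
Your existence--uniqueness argument is essentially the paper's: the same factorization of the Jacobian as in \cref{matrix}, the bound $\abs{\p_{\X_0}H_1-1}\le C\e$ from \cref{p_H} under \cref{small}, and the weak-shock observation that after column reduction the lower $3\times3$ block has columns close to $\delta_1 r_1$, $\delta_3 r_3$ and $-r_2$, so its determinant is of size $\delta_1\delta_3\sim\delta^2\neq 0$ by \cref{uniform}. That part is fine.

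The gap is in the final estimate. Your claim that applying a ``bounded'' inverse of $DG$ to the $O(\e)$ data yields $\abs{(\X_0,\Y_0,\Z_0,\sigma,\xi,\eta)}\le C\e$ is not correct, and your proposed repair (fix $\delta_0$ first and absorb the $\delta$-dependence into $C$) does not rescue it: the paper's convention (stated after \cref{strengh-1}) is that $C$ is independent of $\delta$, and the inverse of the $3\times3$ block is genuinely of size $\delta^{-1}$ in the rows that solve for $\xi$ and $\sigma$ (its determinant is $\sim\delta^2$ while the cofactors of the $\delta$-sized columns are only $O(\delta)$). Consequently $\xi$ and $\sigma$ are in general only $O(\e/\delta)$, not $O(\e)$, and a crude norm bound on the inverse would give $\abs{\eta}\le C\e/\delta$ at best --- too weak for its later use in \cref{ineq-approx}. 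The correct route, which is what the paper does, is to read the last three equations of \cref{constrain} as a linear system in $(\xi,\sigma,\eta)$ and exploit its anisotropic structure: by \cref{strengh-1,uniform} the $\xi$- and $\sigma$-columns have size $\sim\delta$ while the $\eta$-column is $O(1)$ and transversal (the $r_2$ direction), so after rescaling $(\xi,\sigma)\mapsto(\delta\xi,\delta\sigma)$ the coefficient matrix has determinant bounded below uniformly in $\delta$, giving $\abs{(\delta\xi,\delta\sigma,\eta)}\le C\abs{(\C_1,\C_2,\C_3)}\le C\e$ with $C$ independent of $\delta$, and in particular $\abs{\eta}\le C\e$. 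Equivalently, in your Cramer's-rule language, only the row of the inverse corresponding to $\eta$ has $O(1)$ entries; you must use that row specifically rather than the operator norm of the whole inverse.
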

In fact, it follows from the last three equalities of \cref{constrain} with \cref{uniform} that 
$$ \abs{(\delta\xi, \delta\sigma,\eta)}\leq C \abs{(\C_1,\C_2,\C_3)} \leq C\e, $$ 
which finishes the proof of \cref{Lem-variables}.

\vspace{.2cm}

Thanks to \cref{Lem-variables}, the desired ansatz \cref{ansatz-tilde} is well constructed. For convenience, we denote
\begin{equation}\label{background}
	\begin{aligned}
		& \left(V_1,U_1,E_1,\Theta_1\right)(x,t):= \left(v_1^S,u_1^S,E_1^S,\theta_1^S\right)(x-s_1t-\xi), \\
		& \left(V_3,U_3,E_3,\Theta_3\right)(x,t):= \left(v_3^S,u_3^S,E_3^S,\theta_3^S\right)(x-s_3t-\xi-\sigma), 
	\end{aligned}	
\end{equation}
and $ P_i:=p\left(V_i,\Theta_i\right) $ for $ i=1,3. $

\vspace{.2cm}

\subsection{Main result}
By \cref{ic-L1}, we can well define
\begin{equation}\label{ic-Ph-0}
	(\Phi_0,\Psi_0,W_0)(x) = \int_{-\infty}^x (v_0(y)-\vt(y,0), u_0(y)-\ut(y,0), E_0(y)-\Et(y,0)) dy.
\end{equation}
Assume that 
\begin{equation}\label{ic-Ph}
	(\Phi_0,\Psi_0,W_0) \in H^2(\R),
\end{equation}
then we are ready to present the  main result.

\begin{Thm}\label{Thm}
	Assume that \cref{RH,entropy} hold, the strengths of two shocks satisfy \cref{uniform}, and the periodic perturbations $ \left(\phi_{0l,0r}, \psi_{0l,0r}, w_{0l,0r} \right) \in H^3((0,\pi_{l,r})) $ satisfy \cref{zero-ave,small}.  Further assume that the initial data satisfies \cref{ic-L1,ic-Ph} and also $ 1<\gamma\leq 2. $
	Then there exist $ \e_0>0 $ and $ \delta_0>0 $ such that if 
	$$ \norm{(\Phi_0,\Psi_0,W_0)}_2 + 
	\e < \e_0 \quad \text{and } \quad \delta <\delta_0, $$
	where $ \e $ and $ \delta $ are given in \cref{strength,e}, respectively,
	then the problem \cref{NS,ic} admits a unique global solution $ (v,u,\theta) $ satisfying
	\begin{align*}
	 (v-\vt,u-\ut,\theta-\thetat) &\in C\left(0,+\infty;  H^1(\R) \right), \quad v-\vt \in L^2\left(0,+\infty; H^1(\R)\right), \\
	(u-\ut,\theta-\thetat) & \in L^2\left(0,+\infty; H^2(\R) \right).
	\end{align*}
	Moreover, it holds that
	\begin{equation}\label{asym}
	\norm{(v,u,\theta) - \left(V_1+V_3-\vb_m, U_1+U_3, \Theta_1+\Theta_3-\thetab_m\right)}_{L^\infty(\R)} \rightarrow 0 \quad \text{ as } t\rightarrow +\infty.
	\end{equation}
\end{Thm}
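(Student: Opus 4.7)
The plan is to reduce the problem to a standard energy estimate via the anti-derivative method. First, I would use \cref{Lem-variables} to fix all six parameters $(\X_0,\Y_0,\Z_0,\sigma,\xi,\eta)$, which by construction guarantees \cref{0mass-ic}; combined with \cref{ic-Ph}, this means the perturbations $(\phi,\psi,w):=(v-\vt, u-\ut, E-\Et)$ carry zero mass on $\R$ for all $t\ge 0$, so the anti-derivatives
\begin{equation*}
(\Phi,\Psi,W)(x,t)=\int_{-\infty}^x (\phi,\psi,w)(y,t)\,dy
\end{equation*}
are well-defined. Subtracting \cref{eq-ansatz} from \cref{NS}, integrating in $x$, and using the fact that each $F_{i,1}$ vanishes as $\abs{x}\to\infty$, one obtains a closed parabolic system for $(\Phi,\Psi,W)$ (or equivalently for $(\Phi,\Psi,\mathcal{W})$ where $\mathcal{W}$ is the anti-derivative of the temperature perturbation) whose source terms are precisely: (i) the $F_{i,1}$ viscous/pressure errors from the ansatz, (ii) exponentially decaying shift contributions $\X'f_{1,3}$, $\Y'f_{2,3}$, $\Z'f_{3,3}$ controlled by \cref{Lem-shift}, (iii) the shock interaction terms estimated by \cref{cross}, and (iv) the diffusion wave errors from $\Theta$.

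Next, I would set up the standard continuation scheme. Local existence in $C(0,T;H^2(\R))$ for $(\Phi,\Psi,W)$ follows from classical theory for quasilinear parabolic systems. The global existence then reduces to a uniform a priori bound on
\begin{equation*}
N(T):=\sup_{t\in[0,T]}\norm{(\Phi,\Psi,W)(\cdot,t)}_2.
\end{equation*}
The heart of the proof is the basic energy estimate: multiplying the $\Phi$-equation by $\Phi$, the $\Psi$-equation by $\Psi/\bar{\theta}_m$, and using the thermodynamic weight $W-u^\sharp\Psi$ (or a Kawashima-Matsumura-style relative entropy adapted to $V_1+V_3-\vb_m$) on the energy equation, one extracts the good dissipation term
\begin{equation*}
\int_0^T\!\int_\R \bigl(\abs{U_1'}+\abs{U_3'}\bigr)\bigl(\Phi^2+\Psi^2+W^2\bigr)\,dx\,dt + \int_0^T\norm{\p_x(\Phi,\Psi,W)}^2\,dt,
\end{equation*}
which, combined with the smallness assumption $1<\gamma\le 2$ (so the quadratic form associated to the pressure law is positive-definite on the small oscillation), dominates the error terms: the periodic source $F_{i,1}$ is controlled using \cref{Lem-periodic} (exponential decay $e^{-2\alpha t}$), the wave interaction $\BigO(\delta^2)e^{-c_1\delta t - c_0\delta\abs{x}}$ via \cref{cross}, and the diffusion wave errors via standard heat-kernel $L^p$ bounds together with smallness of $\abs{\eta}\le C\e$. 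Higher derivative estimates on $\p_x(\Phi,\Psi,W)$ and $\p_x^2(\Phi,\Psi,W)$ are then obtained successively by differentiating the reformulated system and applying the same scheme, closing the bound $N(T)^2\le C(\e^2+\delta^{1/2}N(T)^2)$.

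Once the uniform bound $N(T)\le C\e$ is established for all $T>0$, global existence and the claimed regularity follow immediately, and the dissipation estimate yields $\int_0^\infty\bigl(\norm{\p_x(\phi,\psi,\zeta)}^2 + \bigl|\tfrac{d}{dt}\norm{\p_x(\phi,\psi,\zeta)}^2\bigr|\bigr)dt<\infty$, so by a standard lemma $\norm{\p_x(\phi,\psi,\zeta)}\to 0$; by Sobolev embedding $\norm{(\phi,\psi,\zeta)}_{L^\infty}\to 0$. Combining this with \cref{equiv}, \cref{Lem-periodic} (periodic parts decay exponentially in time), and the smallness/decay of $\Theta$ and shift differences $\X-\xi,\Y-\xi,\Z-\xi$, one concludes \cref{asym}.

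The main obstacle I anticipate is the basic $L^2$ estimate on $(\Phi,\Psi,W)$: unlike the single-shock case of \cite{HY1shock}, the quadratic good term only controls $(\Phi,\Psi,W)^2$ through $\abs{U_1'}+\abs{U_3'}$, which vanishes in the intermediate region between the two shocks, so one must absorb the $L^2$ norm of the anti-derivative there purely via the viscous dissipation of $\p_x(\Phi,\Psi,W)$ together with exponential smallness of the wave interactions \cref{cross} and of the periodic and shift errors; balancing all three small parameters $\e$, $\delta$, and $\abs{\eta}$ without generating a bad $\delta^{-1}$ factor is the delicate technical point, and this is exactly where the assumption \cref{uniform} on comparable shock strengths and the careful selection \cref{ode-shift} of shift curves pay off.
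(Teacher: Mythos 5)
Your overall strategy---fixing the six parameters via \cref{Lem-variables} so that the perturbation carries zero mass, passing to anti-derivatives, and closing a Huang--Matsumura type energy scheme---is the same as the paper's. But two points in your sketch are genuine gaps rather than routine details. First, the basic energy estimate does not produce the good term $\int(\abs{\p_x U_1}+\abs{\p_x U_3})\,\Phi^2$, nor dissipation of $\p_x\Phi$, at the first step: with the weights actually used ($\Phi$, $\vt\Lt\Psi$, $R\Lt^2\Xi$ with $\Lt=(\pt-\mu\p_x\ut/\vt)^{-1}$ and $\Xi=\frac{\gamma-1}{R}(W-\ut\Psi)$), the weighted zero-order control is only on $\Psi^2+\Xi^2$, and control of $\p_x\Phi$ has to be manufactured separately by substituting $\p_x\Psi=\p_t\Phi+F_1$ into the momentum equation and testing with $\p_x\Phi$ (the paper's \cref{Lem-energy-2}), with the undifferentiated $(\phi,\psi,\zeta)$ and the second derivatives handled in further steps. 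Your claim that the weighted term ``dominates the error terms'' therefore leans on a dissipation structure that is not available; moreover the scheme closes only as $\sup_t\norm{\Phi,\Psi,\Xi}_2^2\le C(\norm{\Phi_0,\Psi_0,W_0}_2^2+\e+\delta^{1/2})$, not as $N(T)\le C\e$, and the hypothesis $1<\gamma\le 2$ enters through the viscous-shock lemma (\cref{Lem-shocks}), not through positivity of a pressure quadratic form.

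Second, and more seriously, you treat the forcing of the anti-derivative system as directly ``controlled by'' \cref{Lem-periodic}, \cref{Lem-shift} and \cref{cross}. The reformulated equations contain the anti-derivatives $F_i=F_{i,1}+\int_{-\infty}^x f_{i,2}\,dy+(\text{shift})'(t)\int_{-\infty}^x f_{i,3}\,dy$, where $f_{i,2},f_{i,3}$ are products of non-decaying periodic oscillations with shock-profile derivatives; the central new difficulty of the periodic setting is to show that these anti-derivatives lie in $L^2(\R)$ (indeed $H^1$, and $H^2$ for $F_1$) uniformly in $x$ and decay like $C\e e^{-\alpha t}+C\delta^{3/2}e^{-c_1\delta t}$ (the paper's \cref{Lem-F}). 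This requires not only the exact choice \cref{ode-shift} of the shift ODEs (which you do invoke) but also a careful decomposition using the Rankine--Hugoniot relations, the profile equations \cref{ode}, the interaction bound \cref{cross}, and a splitting of the line at $x=s_it$, carried out at length in Section 5; it is also here that the error terms $\Rt_1,\Rt_2$ generated by the diffusion wave must be shown to satisfy $\Rt_i\approx0$ before the pointwise bound \cref{rel-2} can be integrated. Without an argument of this type the energy scheme cannot even be initialized with square-integrable, time-decaying forcing, so this step cannot be passed over as a citation to the preliminary lemmas.
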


\begin{Rem}
	It is noted that the zero average condition \cref{zero-ave} is necessary for the stability of the background wave. Otherwise, by adding the constant averages of these periodic perturbations onto the constants $ (\vb_i, \ub_i,\Eb_i) $ for $ i=l,r, $ the new states may generate other kinds of Riemann solutions.
\end{Rem}

\begin{Rem}
	The ansatz $ (\vt,\ut,\thetat)$ is more complicated than that in \cite{Huang2009}. Besides adding periodic perturbations, we have to choose appropriate shift functions (of time) $\X,\Y,\Z$ for each variable $ v,u $ and $ E, $ respectively, while the ansatz in \cite{Huang2009} is a composite wave in which each of the shock waves is shifted by a constant.	
	Meanwhile, the definitions of $\C_i(i=1,2,3)$ shows that in contrast to the case of localized perturbations, besides the localized part of the initial perturbation, the periodic oscillations at infinities generate another shift to the background composite wave.
\end{Rem}


\vspace{.3cm}

\section{Reformulation of the problem}  
In this section, we reformulate the problem \cref{NS,ic} into the one for the anti-derivative variables of the perturbation $ (v-\vt,u-\ut,E-\Et). $
From \cref{eq-ansatz,ansatz-tilde}, the ansatz $(\vt,\ut,\Et)$ satisfies that
\begin{equation}\label{eq-tilde}
	\begin{cases}
		\p_t \vt-\p_x \ut= \p_x F_1,\\
		\p_t \ut+\p_x \pt =\mu \p_x \left( \frac{\p_x \ut}{\vt} \right) + \p_x F_2 + \p_x \Rt_1,\\
		\p_t \Et + \p_x ( \pt \ut)=\kappa \p_x \big( \frac{\p_x \thetat}{\vt}  \big) +\mu \p_x \left( \frac{ \ut \p_x \ut}{\vt}  \right)+\p_x F_3 + \p_x \Rt_2,
	\end{cases}
\end{equation}
where $ F_i, i=1,2,3 $ are the anti-derivative variables of the source terms in \cref{eq-ansatz}, i.e.
\begin{equation}\label{F}
\begin{aligned}
	F_1(x,t) & := F_{1,1}(x,t) + \int_{-\infty}^x f_{1,2}(y,t)dy + \X'(t)\int_{-\infty}^x f_{1,3}(y,t)dy, \\
	F_2(x,t) & := F_{2,1}(x,t) + \int_{-\infty}^x f_{2,2}(y,t)dy + \Y'(t)\int_{-\infty}^x f_{2,3}(y,t)dy, \\
	F_3(x,t) & := F_{3,1}(x,t) + \int_{-\infty}^x f_{3,2}(y,t)dy + \Z'(t)\int_{-\infty}^x f_{3,3}(y,t)dy, 
\end{aligned}
\end{equation}
and the remainders $ \Rt_1 $ and $ \Rt_2 $ are given by
\begin{equation}\label{Rt1-2}
	\begin{aligned}
		\Rt_1 & = a\p_t\Theta + \pt-p^\sharp - \mu \Big(\frac{\p_x \ut}{\vt}-\frac{u^\sharp}{v^\sharp}\Big), \\
		\Rt_2 & = \frac{a\pb_m}{\gamma-1} \p_x\Theta + \pt\ut-p^\sharp u^\sharp - \kappa \Big(\frac{\p_x\thetat}{\vt}-\frac{\p_x\theta^\sharp}{v^\sharp}\Big) - \mu \Big(\frac{\ut\p_x \ut}{\vt}-\frac{u^\sharp\p_x u^\sharp}{v^\sharp}\Big).
	\end{aligned}
\end{equation}

\begin{Lem}\label{Lem-F}
	Under the assumptions of \cref{Thm}, the anti-derivative variables \cref{F} exist and satisfy that
	\begin{equation}\label{est-F}
		\norm{ F_1 }_2 + \norm{ F_2,F_3 }_1  \leq C\e e^{- \alpha t} + C\delta^{3/2} e^{-c_1 \delta t},
	\end{equation}
	where $ \alpha>0 $ and $ c_1>0 $ are the constants in Lemmas \ref{Lem-shift} and \ref{Lem-shocks}, respectively.
\end{Lem}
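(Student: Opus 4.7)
The plan is to handle each $F_i$ as a single combination of its three constituents in \cref{F}, exploiting (i) the exponential time decay of the periodic perturbations given by \cref{Lem-periodic}, (ii) the Rankine--Hugoniot cancellations built into \cref{source-1,source-2,source-3}, and (iii) the very choice \cref{ode-shift} of the shift derivatives. Existence of the antiderivatives is immediate: each $f_{i,j}$ in \cref{source-1,source-2,source-3} is a product of an $x$-bounded coefficient built from $v_{l,r}, u_{l,r}, E_{l,r}, p,\ldots$ with a translate of $g_i'$ or $h_i'$, which by \cref{Lem-shocks} has magnitude $O(\delta)$ and spatial decay $e^{-c_0\delta|\cdot|}$. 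Hence $f_{i,j}(\cdot,t)\in L^1(\R)$ for every $t\geq 0$, the antiderivatives $\int_{-\infty}^x f_{i,j}\,dy$ are bounded, and by \cref{ode-shift}
\begin{equation*}
  \int_{\R}\bigl(f_{i,2}(y,t)+(\text{shift})'(t)\,f_{i,3}(y,t)\bigr)\,dy \;=\; 0, \qquad i=1,2,3,
\end{equation*}
so the combined antiderivative vanishes at $+\infty$ as well as at $-\infty$.

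Next I would split every $x$-bounded coefficient as background $+$ periodic perturbation, e.g.\ $u_l=\ub_l+\psi_l$, $v_l=\vb_l+\phi_l$, with $\norm{(\phi_l,\psi_l,w_l)}_{L^\infty(\R)}\leq C\e e^{-2\alpha t}$ from \cref{Lem-periodic} and Sobolev embedding. The background pieces combine cleanly via the Rankine--Hugoniot relations
\begin{equation*}
  s_i\jump{v}_i+\jump{u}_i=0, \qquad s_i\jump{u}_i=\jump{p}_i, \qquad s_i\jump{E}_i=\jump{pu}_i.
\end{equation*}
A direct check shows that the background part of $f_{1,2}$ vanishes identically (since $s_1(\vb_l-\vb_m)+\ub_l = \jump{u}_1+\ub_l=0$), the background part of $f_{3,2}$ vanishes identically (since $s_1\jump{E}_1=\jump{pu}_1$), and the nontrivial constant $\pb_m$ produced by $F_{2,1}$ in the region between the two shocks is exactly cancelled by the background value $-\pb_m$ of $\int_{-\infty}^x f_{2,2}\,dy$ after crossing the $1$-shock, again by R--H. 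The only surviving background contributions are shock--shock overlap terms in which the $1$- and $3$-shock profiles multiply each other; by the crossing estimate \cref{cross} they are pointwise bounded by $C\delta^2 e^{-c_1\delta t-c_0\delta|x|}$, so their $L^2$ norm is $O(\delta^{3/2}e^{-c_1\delta t})$.

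The perturbation pieces are of two types. Terms like $\psi_l\,\tau^1_\X(g_1')$ and analogues are pointwise bounded by $C\e\delta e^{-2\alpha t}e^{-c_0\delta|x-s_1t-\X|}$, hence their $L^2$ norm is at most $C\e\delta^{1/2}e^{-2\alpha t}\leq C\e e^{-\alpha t}$. Differences such as $u_l[\tau^1_\Y(g_1)-\tau^1_\X(g_1)]$ in $F_{1,1}$ are estimated by the mean value theorem together with $\abs{\X-\Y}\leq C\e e^{-2\alpha t}$ from \cref{Lem-shift}; the multipliers $\X',\Y',\Z'$ are themselves $O(\e e^{-2\alpha t})$ by the same lemma. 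The $H^1$ (resp.\ $H^2$) bounds on $F_2,F_3$ (resp.\ $F_1$) follow by successive differentiation in $x$: each $\p_x$ either lands on a shock profile (tightening localization and keeping the overlap $L^2$ bound at $O(\delta^{3/2}e^{-c_1\delta t})$) or on a periodic factor (preserving the $\e e^{-2\alpha t}$ decay, as only three $x$-derivatives are available from the $H^3((0,\pi_{l,r}))$ control in \cref{Lem-periodic}). Summing all contributions gives \cref{est-F}.

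The main obstacle will be the bookkeeping of these cancellations in the presence of three distinct shift curves $\X,\Y,\Z$ and of the nonlinear quantities $p(v^\sharp,\theta^\sharp)$, $\mu\p_xu^\sharp/v^\sharp$, $\kappa\p_x\theta^\sharp/v^\sharp$ and $\mu u^\sharp\p_xu^\sharp/v^\sharp$. Expanding each of them around the composite background viscous wave produces cross-terms in which the $1$-shock and $3$-shock profiles enter as a product; one must verify that every such cross-term is of shock-interaction type \cref{cross}, which forces a Taylor expansion of $1/v^\sharp$ and $p(v^\sharp,\theta^\sharp)$ to second order in the small weights $\tau^1_\X(g_1)$ and $\tau^3_{\X+\sigma}(g_3)$ (and their $\Y,\Z$-analogues). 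This is the most tedious step but generates no term beyond the two types already analysed above.
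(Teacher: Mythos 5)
Your outline captures several of the genuine ingredients of the paper's proof: the antiderivatives exist (and vanish at $+\infty$ as well as $-\infty$) precisely because of the normalization \cref{ode-shift}; the coefficients are split as background plus periodic; the Rankine--Hugoniot identities with $\ub_m=0$ annihilate the background of $f_{1,2}$ and $f_{3,2}$ and produce the $\pb_m$--cancellation you describe for $F_2$; the shock--shock overlap terms are handled by \cref{cross}; and the periodic factors contribute $O(\e e^{-\alpha t})$ via \cref{Lem-periodic}. Those points are right, and the $\pb_m$ calculation in particular is exactly what the paper's algebra produces.

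However, there is a real gap in the way you propose to treat the nonlinear quantities $p(v^\sharp,\theta^\sharp)$, $p(v^\sharp,\theta^\sharp)u^\sharp$, $\kappa\p_x\theta^\sharp/v^\sharp$, etc., and the gap is precisely why the paper splits the estimate into the two cases $x<s_3t$ and $x>s_3t$, using \cref{int-min} on the left and \cref{int-plus} on the right (and you do not mention this split at all). If you ``expand around the composite background viscous wave'' on all of $\R$ at once, the first-order perturbation is $v^\sharp-v^S_{(\xi,\xi+\sigma)}$, which contains the \emph{unlocalized} periodic piece $\phi_l$ (coming from $v^\sharp-v_l=v^S_{(\xi,\xi+\sigma)}-\vb_l+J_1$ with $J_1=(v^\sharp-v^S_{(\xi,\xi+\sigma)})-\phi_l$). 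This piece is only $L^\infty$-small, never $L^2(\R)$-small. The way the paper gets $L^2$ control is by (i) subtracting the \emph{left} periodic solution $p(v_l,\theta_l)u_l$ to absorb the oscillations for $x\to-\infty$, (ii) comparing against $v^S_{(\xi,\xi+\sigma)}-\vb_l$, and then (iii) integrating $|K_i|^2$ and $|J_i|^2$ only over $(-\infty,s_3t)$, a region of width $O(t)$ on which the periodic factor $O(\e e^{-2\alpha t})$ times $\sqrt{t}$ still decays; on $(s_3t,\infty)$ the same expansion would give a divergent $L^2$ integral (periodic factor times a genuine constant $\approx\vb_r-\vb_l$), so there one must switch to the other representation \cref{int-plus} and expand around the \emph{right} periodic solution instead. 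Your proposal does not contain this two-sided bookkeeping, and without it the $L^2(\R)$ bound does not close.

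A second, smaller issue: you describe the nonlinearity as being Taylor-expanded ``to second order in the small weights $\tau^1_\X(g_1)$ and $\tau^3_{\X+\sigma}(g_3)$.'' Those weights are not small --- they interpolate between $0$ and $1$. The quantities one actually expands in are (a) the differences between the coefficient functions evaluated at $(v^\sharp,\theta^\sharp)$ and at the background composite wave or the periodic solution (these differences are $O(\e e^{-2\alpha t})$), and (b) the shock amplitudes $O(\delta)$ appearing in the profiles; the cross-terms $\tau^1\cdot\tau^3$ are then controlled by \cref{cross}. This is exactly the structure of the terms $K_1,\dots,K_7$ and $D$ in the paper's proof, and it is not a ``second-order Taylor expansion in the weights.''
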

The proof is based on Lemmas \ref{Lem-periodic} and \ref{Lem-shift}, and we place it in \cref{Sec-shift-F} for brevity.

Introduce 
\begin{equation}\label{ansatz-breve}
\begin{aligned}
\vtt & := V_1 + V_3 -\vb_m + \Theta, \\
\utt & := U_1 + U_3 + a\p_x\Theta, \\
\Ett & := E_1 + E_3 - \Eb_m + \frac{\pb_m}{\gamma-1} \Theta, \\
\thetatt & := \frac{\gamma-1}{R}\big(\Ett - \frac{1}{2} \utt^2\big) \quad \text{and } \quad
\ptt := \frac{R\thetatt}{\vtt}.
\end{aligned}
\end{equation}
We remark that \cref{ansatz-breve} is exactly the ansatz constructed in \cite{Huang2009}, in which the initial perturbation around the background composite wave is in the $ H^1(\R) $ space, i.e. the periodic perturbations $ \left(\phi_{0i},\psi_{0i},w_{0i}\right) $ for $ i=l,r $ vanish. Comparing \cref{ansatz-breve} with the ansatz \cref{ansatz-tilde}, when $ A $ represents either $ v,u,E,\theta $ or $ p, $ direct calculations yield that
\begin{equation}\label{equiv-1}
\tilde{A} = \breve{A} + \error \quad \text{ and } \quad \p_t^j \p_x^k \tilde{A} = \p_t^j \p_x^k \breve{A} + \error, \quad j, k = 0, 1, 2, \cdots
\end{equation}
where and hereafter we use $ \error $ to represent the error terms which satisfy the relation \cref{rel-1}, i.e. $ \error\sim 0. $

\begin{Lem}\label{Lem-Rt}
Under the assumptions of \cref{Thm}, it holds that
\begin{equation}\label{ineq-Rt}
\Rt_i\approx 0, \quad  \p_x\Rt_i \approx 0, \quad i =1,2.
\end{equation}	
\end{Lem}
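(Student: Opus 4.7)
The strategy is to substitute \cref{thet,pt,ansatz-tilde} into the definitions \cref{Rt1-2} of $\Rt_1,\Rt_2$, expand, and verify term by term that the outcome satisfies the pointwise bound \cref{rel-2}. Two structural features drive the analysis: the leading-order $\p_x\Theta$-contributions cancel thanks to the specific choice $a=\tfrac{(\gamma-1)\kappa}{\gamma R\vb_m}$, and every remaining summand is a product in which at least one factor is genuinely localized in $x$ — either a derivative of the Gaussian $\Theta$, or a quantity like $p^\sharp-\pb_m$, $\p_x v^\sharp$, $\p_x u^\sharp$, or $\p_x\theta^\sharp$ carrying shock-layer decay $\delta e^{-c_0\delta|x-s_i t|}$ via \cref{Lem-shocks,Lem-periodic}, or a shock--shock cross product controlled by \cref{cross}.

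First, for $\Rt_1$, I would use \cref{pt} to write $\pt-p^\sharp=-\tfrac{\Theta}{\vt}(p^\sharp-\pb_m)-\tfrac{a(\gamma-1)}{\vt}\bigl(\tfrac{a}{2}|\p_x\Theta|^2+u^\sharp\p_x\Theta\bigr)$, and the identities $\vt-v^\sharp=\Theta$, $\ut-u^\sharp=a\p_x\Theta$ to rewrite $-\mu\bigl(\tfrac{\p_x\ut}{\vt}-\tfrac{\p_x u^\sharp}{v^\sharp}\bigr)=-\tfrac{\mu a\p_x^2\Theta}{\vt}+\tfrac{\mu\Theta\,\p_x u^\sharp}{\vt v^\sharp}$. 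Since $a\p_t\Theta=a^2\p_x^2\Theta$ by \cref{Thet}, the two $\p_x^2\Theta$-pieces combine and are bounded by the middle term $|\eta|(1+t)^{-3/2}e^{-cx^2/(1+t)}$ of \cref{rel-2}. Every remaining piece is the product of a diffusion factor ($\Theta$ or $\p_x\Theta$) with a shock-localized factor ($p^\sharp-\pb_m$, $\p_x u^\sharp$, or $u^\sharp$): near the shock layer where $|x|\gtrsim t$ the heat kernel supplies the $e^{-ct}$ decay required by the third term of \cref{rel-2}, while off the shock the shock factor provides $e^{-c\delta|x|}$ fitting the first term, with prefactors of size $\delta^2+|\eta|\delta^{3/2}$ or $\delta+|\eta|$ emerging directly.

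For $\Rt_2$ the same framework applies, but now with an essential cancellation. Using \cref{thet}, the linear $\p_x\Theta$-contributions to $\Rt_2$ come from $\tfrac{a\pb_m}{\gamma-1}\p_x\Theta$, from the leading part of $\pt\ut$ (namely $ap^\sharp\p_x\Theta$), and from the leading part of $-\kappa\bigl(\tfrac{\p_x\thetat}{\vt}-\tfrac{\p_x\theta^\sharp}{v^\sharp}\bigr)$ (namely $-\tfrac{\kappa\pb_m}{R\vt}\p_x\Theta$); by the identity $\tfrac{a\gamma}{\gamma-1}=\tfrac{\kappa}{R\vb_m}$ their sum factors as $\tfrac{\kappa\pb_m}{R\vb_m\vt}(\vt-\vb_m)\p_x\Theta+a(p^\sharp-\pb_m)\p_x\Theta$. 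Both remaining terms fall into one of the three parts of \cref{rel-2}, the $\Theta\p_x\Theta$ piece giving a quadratic $|\eta|^2(1+t)^{-2}$ decay absorbed in the middle term. Quadratic corrections from $|\p_x\Theta|^2$ and $\Theta\p_x u^\sharp$ are smaller and handled identically, and the convective and viscous cross-differences in $\Rt_2$ mirror the $\Rt_1$ analysis.

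For $\p_x\Rt_i$ I would differentiate the organized expansion above: each extra $\p_x$ either hits the heat kernel (losing a factor $(1+t)^{-1/2}$, still absorbable in the middle term of \cref{rel-2}) or a shock profile (gaining a factor $\delta$ from \cref{Lem-shocks}). The main obstacle is purely organizational: one must verify, for every summand — especially those in which $v^\sharp,u^\sharp,E^\sharp$ differ from their periodic-free counterparts $\vtt,\utt,\Ett$ through \cref{equiv-1} — that the periodic-perturbation piece is paired with a genuinely $x$-localizing factor, so that the $\error\sim 0$ contributions from \cref{equiv-1} are upgraded to the strictly stronger $\approx 0$ bound required by \cref{rel-2}.
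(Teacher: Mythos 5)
Your plan is correct and is essentially the paper's own argument: expand $\Rt_1,\Rt_2$ via \cref{pt,thet,ansatz-tilde}, control every product of a diffusion factor ($\Theta$, $\p_x\Theta$, $\p_x^2\Theta$) with a shock-localized or $\approx$-small factor by the Huang--Matsumura pointwise estimates (including $(A^\sharp-\bar A_m)\p_x^j\Theta\approx 0$ and $\error\,\p_x^j\Theta\approx 0$), and use the exact cancellation $\tfrac{a\gamma}{\gamma-1}=\tfrac{\kappa}{R\vb_m}$ to remove the $O(1)$-coefficient $\p_x\Theta$ terms in $\Rt_2$, which is precisely the paper's computation. The only nitpick is cosmetic: $\Theta\p_x\Theta$ decays like $|\eta|^2(1+t)^{-3/2}$ (times the Gaussian), not $(1+t)^{-2}$, but it is absorbed in the middle term of \cref{rel-2} either way.
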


\begin{proof}
From \cite{Huang2009}, when $ A $ and $ B $ represent either $ v, u, E, \theta $ or $ p, $ it holds that
\begin{align*}
	\big(A_1^S(x-s_1t) - \bar{A}_m\big) \big(B_3^S(x-s_3t)-\bar{B}_m\big) & \approx 0,\\
	\big(A_i^S(x-s_it) - \bar{A}_m\big) \Theta & \approx 0,  \quad i=1,3,
\end{align*}
and 
\begin{align*}
\p_x^k \big(A^\sharp-\bar{A}_m\big) \p_x^j \Theta = \p_x^k \big( A_1 + A_3 - \bar{A}_m + \error -\bar{A}_m \big) \p_x^j \Theta \approx \error \p_x^j \Theta \approx 0, ~ k,j =0,1,2,\cdots,
\end{align*}
where $ A_i $ represents the terms given in \cref{background}.
It follows from \cref{pt} that
\begin{align*}
\Rt_1 \approx - \frac{\Theta}{\vt} \left(p^\sharp-\pb_m\right) +  \frac{a}{\vt} \p_x^2 \Theta - \frac{\Theta}{\vt v^\sharp} \p_x u^\sharp \approx 0.
\end{align*}
By \cref{thet}, it holds that
\begin{align*}
\Rt_2 & \approx \frac{a\pb_m}{\gamma-1} \p_x\Theta + p^\sharp (\ut-u^\sharp) - \frac{\kappa \pb_m}{R\vt} \p_x\Theta + \kappa \frac{\p_x\theta^\sharp}{\vt v^\sharp} \Theta - \mu \p_x u^\sharp \Big( \frac{\ut}{\vt}-\frac{u^\sharp}{v^\sharp} \Big) \\
& \approx \frac{a\pb_m}{\gamma-1} \p_x\Theta + \pb_m a\p_x\Theta - \frac{\kappa \pb_m}{R\vb_m} \p_x\Theta = 0.
\end{align*}
It is similar to prove the derivatives $ \p_x \Rt_1 $ and $ \Rt_2. $ 
\end{proof}

Define the perturbations
\begin{align*}
\phi := v-\vt, \quad \psi:= u-\ut, \quad w := E-\Et \quad \text{and} \quad \zeta := \theta-\thetat,
\end{align*}
and the anti-derivatives
\begin{equation}\label{anti-deriv}
\begin{aligned}
\big(\Phi,\Psi,W\big)(x,t) := \int_{-\infty}^{x} (\phi,\psi,w)(y,t) dy.
\end{aligned}
\end{equation}
Let $ \Xi(x,t):=\frac{\gamma-1}{R} \left(W-\ut \Psi\right)(x,t). $
Then it holds that
\begin{equation}\label{anti-deriv-2}
\zeta = \p_x \Xi-\frac{\gamma-1}{R} \Big(\frac{1}{2} \abs{\p_x \Psi}^2 -\p_x \ut \Psi \Big).
\end{equation}
From \cref{NS,eq-tilde,ic-Ph-0}, we arrive at the reformulated problem 
\begin{equation}
\label{eq-anti-deriv}
\begin{cases}
\p_t \Phi -\p_x \Psi =-F_1,& \\
\p_t \Psi - \big( \frac{\pt}{\vt} -\frac{\mu  \p_x \ut}{\vt^2}\big) \p_x \Phi
+\frac{R}{\vt}\p_x \Xi+\frac{\gamma-1}{\vt}\p_x \ut \Psi = \frac{\mu}{\vt}\p_x^2\Psi+J_1-F_2-\Rt_1,& \\
\frac{R}{\gamma-1} \p_t \Xi + \big( \pt -\frac{\mu \p_x \ut}{\vt} \big) \p_x \Psi +\p_t \ut \Psi -\frac{\kappa(\gamma-1)}{\vt R} \p_x (  \p_x \ut \Psi) +\frac{\kappa \p_x \thetat}{ v\vt}\p_x \Phi &\\
\qquad  =\frac{\kappa}{\vt}\p_x^2 \Xi +J_2 -F_3+\ut F_2 - \Rt_2 +\ut\Rt_1, & 
\end{cases}
\end{equation}
with the initial data 
\begin{equation}\label{ic-pert}
	(\Phi,\Psi,\Xi)(x,0) = (\Phi_0, \Psi_0, \Xi_0)(x) \in H^2(\R),
\end{equation}
where $ \Xi_0 := \frac{\gamma-1}{R} \big( W_0(x)- \ut(x,0) \Psi_0(x) \big) $ and $ J_1 $ and $ J_2 $ are higher order terms given by
\begin{equation}\label{eq-J}
\begin{aligned}
J_1 & =
\frac{\gamma-1}{2\vt} (\p_x \Psi)^2 +\frac{\mu}{v \vt^2} \p_x \ut (\p_x \Phi)^2 -\frac{\mu}{v \vt} \p_x^2 \Psi\p_x \Phi 
-\big( p-\pt +\frac{\pt}{\vt} \p_x\Phi-\frac{R}{\vt} \zeta \big) \\
& =  \frac{\gamma-1}{2\vt} (\p_x \Psi)^2 +\frac{\mu}{v \vt^2} \p_x \ut (\p_x \Phi)^2 -\frac{\mu}{v \vt} \p_x^2 \Psi\p_x \Phi + \frac{\p_x \Phi}{\vt} (p-\pt), \\
J_2 & =(\pt-p) \p_x \Psi + \mu \Big( \frac{\p_xu}{v} - \frac{\p_x \ut}{\vt} \Big) \p_x \Psi -\frac{\kappa (\gamma-1)}{R\vt}\p_x\Psi\p_x^2 \Psi -\kappa\frac{\p_x \Phi \p_x (\theta-\thetat)}{v\vt}.
\end{aligned}
\end{equation}

\begin{Thm}\label{Thm-anti}
	Under the assumptions of \cref{Thm}, there exist $ \delta_0>0 $ and $ \e_0>0 $ such that if $ \delta <\delta_0 $ and
	$$ \norm{(\Phi_0,\Psi_0,W_0)}_2 + \e < \e_0, $$ 
	the problem \cref{eq-anti-deriv,ic-pert} admits a unique global solution $ (\Phi,\Psi,\Xi), $ satisfying
	\begin{align*}
		& (\Phi,\Psi,\Xi) \in C\left(0,+\infty; H^2(\R) \right), \quad \p_x \Phi \in L^2\left(0,+\infty; H^1(\R)\right), \\
		& (\p_x\Psi, \p_x\Xi) \in L^2\left(0,+\infty; H^2(\R) \right).
	\end{align*}
\end{Thm}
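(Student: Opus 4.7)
The plan is to adopt the classical Matsumura--Kawashima--Nishihara framework as refined by Huang--Matsumura \cite{Huang2009} for composite waves, namely local well-posedness plus a continuation argument driven by uniform a priori estimates in $H^2(\R)$. Local existence of a unique solution $(\Phi,\Psi,\Xi) \in C([0,T_0]; H^2(\R))$ of \cref{eq-anti-deriv,ic-pert} follows from a standard contraction argument for the linearized parabolic system, since the coefficient $1/\vt$ is smooth, strictly positive and uniformly bounded thanks to \cref{Lem-shocks,Lem-periodic}. Global existence then reduces to closing the a priori assumption
\[ \N(T)^2 := \sup_{t \in [0,T]} \norm{(\Phi,\Psi,\Xi)(t)}_2^2 \leq \nu_0^2 \]
for some small $\nu_0>0$ independent of $T$.

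For the basic $L^2$ estimate, I would multiply the three equations of \cref{eq-anti-deriv} by the entropy-type weights $\frac{R\thetat}{\vt^2}\Phi$, $\Psi$ and $\Xi/\thetat$ respectively, following the relative-entropy calculation of \cite{Matsumura1985,Huang2009}. After integration by parts, the principal part produces a positive quadratic form in $(\Phi,\Psi,\Xi)$, a parabolic dissipation of $\p_x \Psi$ and $\p_x \Xi$, and a Kawashima-type good term $-\sum_{i=1,3} \int \p_x U_i \,(\Psi^2 + \Xi^2)\,dx$ which is nonnegative because $U_i'<0$ by \cref{Lem-shocks}. The error terms to dispose of are: the shock-shock interactions on the overlap region, controlled pointwise by \cref{cross}; the shock/diffusion-wave couplings encoded in the $\error$ terms of \cref{equiv-1}, absorbed via the pointwise bound \cref{rel-2}; the source terms $F_1,F_2,F_3$, controlled in $H^2(\R)$ by \cref{Lem-F}; and the remainders $\Rt_1,\Rt_2$, which satisfy $\Rt_i \approx 0 \approx \p_x \Rt_i$ by \cref{Lem-Rt}. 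The nonlinear terms $J_1, J_2$ are absorbed into the dissipation via Sobolev embedding together with the a priori smallness $\N(T) \ll 1$.

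First- and second-order estimates then follow by differentiating \cref{eq-anti-deriv} in $x$ and repeating the energy procedure. To recover the missing dissipation of $\p_x \Phi$ (absent at the $L^2$ level for the hyperbolic component), I would employ the classical device of multiplying the momentum equation by $\p_x \Phi$ and using $\p_t \Phi = \p_x \Psi - F_1$ from the first equation to trade a time derivative of $\Phi \p_x \Phi$ for the desired $(\p_x \Phi)^2$ dissipation, modulo harmless boundary-in-time contributions and source terms already controlled above. Summing the estimates at orders $0$, $1$ and $2$ should yield a bound of the form
\[ \norm{(\Phi,\Psi,\Xi)(t)}_2^2 + \int_0^t \left( \norm{\p_x \Phi(s)}_1^2 + \norm{\p_x \Psi(s)}_2^2 + \norm{\p_x \Xi(s)}_2^2 \right) ds \leq C \bigl( \norm{(\Phi_0,\Psi_0,\Xi_0)}_2^2 + \e^2 + \delta^{\beta} \bigr) \]
for some $\beta>0$, which closes the a priori bound once the right-hand side is sufficiently small, and thus delivers the global solution by continuation.

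The main obstacle I expect is the careful treatment of the shock-shock interaction in the higher-order estimates. Unlike the single-shock setting of \cite{HY1shock}, the Kawashima good term $-\sum \p_x U_i\,(\cdots)^2$ degenerates on the overlap region where neither profile dominates, and the assumption \cref{uniform} of comparable shock strengths becomes essential: it lets one bound the interaction errors uniformly in $\delta$ via \cref{cross}. A secondary difficulty is disentangling the hyperbolic shock variables from the parabolic diffusion wave $\Theta$, since individually the coupling terms are not small in $L^2$; the pointwise estimate \cref{rel-2}, combined with a Cauchy--Schwarz splitting in $(x,t)$, supplies the required smallness. Once \cref{Thm-anti} is established in this way, \cref{Thm} follows by passing back to $(\phi,\psi,\zeta)$ through $x$-differentiation, and the asymptotic behavior \cref{asym} becomes a direct consequence of the global dissipation estimate together with the exponential decay of the shift derivatives provided by \cref{Lem-shift}.
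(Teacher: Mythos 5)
Your overall skeleton (local existence plus continuation, a basic weighted energy estimate for $(\Phi,\Psi,\Xi)$, the recovery of $\p_x\Phi$-dissipation by multiplying the momentum equation by $\p_x\Phi$ and using $\p_t\Phi=\p_x\Psi-F_1$, derivative estimates, and the disposal of errors through \cref{cross}, \cref{rel-2}, \cref{Lem-F} and \cref{Lem-Rt}) is the same as the paper's, which proves \cref{Thm-anti} via \cref{Prop-a-priori} and Lemmas \ref{Lem-energy-1}--\ref{Lem-energy-3}. However, there is a genuine gap at the core of your basic $L^2$ estimate: the choice of multipliers and the claimed origin of the good term. You multiply the three equations of \cref{eq-anti-deriv} by $\frac{R\thetat}{\vt^2}\Phi$, $\Psi$ and $\Xi/\thetat$ and assert that the localized quadratic term $-\sum_{i}\int\p_x U_i\,(\Psi^2+\Xi^2)\,dx$ comes out nonnegative ``because $U_i'<0$''. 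With an unweighted (or merely $\thetat$-weighted) multiplier this is not what happens: the term $\frac{\gamma-1}{\vt}\p_x\ut\,\Psi$ in the second equation of \cref{eq-anti-deriv} produces $\frac{\gamma-1}{\vt}\p_x\ut\,\Psi^2$, which has the \emph{wrong} sign (since $\p_x\ut<0$), and a localized term of size $\delta^2e^{-c\delta|x|}\Psi^2$ cannot be absorbed by the parabolic dissipation $\norm{\p_x\Psi}^2$ uniformly in $T$ (Sobolev embedding only gives $\delta\norm{\Psi}\norm{\p_x\Psi}$ pointwise in time, which is not time-integrable). This is precisely why the paper multiplies by $\vt\Lt\Psi$ and $R\Lt^2\Xi$ with the profile-dependent weight $\Lt=(\pt-\mu\p_x\ut/\vt)^{-1}$: the compensating contributions $-\tfrac12\p_t(\vt\Lt)$ and $-\tfrac{R^2}{\gamma-1}\Lt\p_t\Lt$ generate positive localized terms, and the net coefficient in $\tilde b$ is $\tfrac12\sum_i\abs{\p_x U_i}L_i^2\big(b_i-2(\gamma-1)L_i^{-1}\big)\approx c\sum_i\abs{\p_x U_i}\big((3-\gamma)\pb_m-C\delta_i\big)$, whose positivity uses the hypothesis $1<\gamma\le2$ and the smallness of $\delta$ — not merely the monotonicity $U_i'<0$. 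Your proposal never confronts this sign balance, and with the multipliers you wrote down the estimate would not close.

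A secondary, smaller inaccuracy: you locate the ``main obstacle'' in a degeneracy of the good term on the overlap region of the two shocks, but the shock--shock interaction there is already exponentially decaying in time by \cref{cross} and is handled exactly as in \cite{Huang2009}; the genuinely delicate points in the paper are (i) the weight/sign structure just described, and (ii) the fact that the sources $F_1,F_2,F_3$ and remainders $\Rt_1,\Rt_2$ are only time-decaying (not small in a pointwise $x$-integrable sense), which is why the paper needs \cref{Lem-F} and \cref{Lem-Rt} before the energy scheme can even start. Your higher-order step (differentiating the anti-derivative system) is essentially equivalent to the paper's Lemmas \ref{Lem-energy-3} and the following lemma, which work directly with $(\phi,\psi,\zeta)$ from \cref{NS} minus \cref{eq-tilde}; that part of your plan is fine.
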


\begin{proof}[Proof of \cref{Thm}]
If \cref{Thm-anti} holds true, with the fact that
\begin{equation}
	\norm{(\vt, \ut,\thetat) - \left(V_1+V_3-\vb_m, U_1+U_3, \Theta_1+\Theta_3-\thetab_m\right)}_{L^\infty(\R)} \leq C\e e^{-\alpha t},
\end{equation}
it is standard (see \cite{Huang2009}) to verify \cref{asym}.
Thus, it remains to prove \cref{Thm-anti} to complete the proof of \cref{Thm}.
\end{proof}

\vspace{.3cm}

\section{A priori estimates}

Based on the standard local-existence theory, one can finish the proof of \cref{Thm-anti} if the following a priori estimate, \cref{Prop-a-priori}, holds true.

For $ T>0, $ denote
\begin{equation}
\label{a-priori-assumption}
\nu: =\sup_{t\in [0,T]} \norm{(\Phi, \Psi, \Xi)(t)}_2.
\end{equation}

\begin{Prop}[A priori estimates]\label{Prop-a-priori}
	Under the assumptions of \cref{Thm-anti}, there exist positive constants $\delta_0, \e_0 $ and $ \nu_0,$ independent of $ T, $ such that if $\delta < \delta_0, \e < \e_0 $ and $ \nu<\nu_0 $, then
	\begin{align}
		& \sup_{t\in[0,T]}\norm{ \Phi, \Psi, \Xi }^2_2 +\int_{0}^T \big(\norm{ \p_x \Phi}^2_1 + \norm{ \p_x \Psi, \p_x \Xi}^2_2 \big)dt \notag \\
		& \qquad  +\int_{0}^T \int_{\R} \left(\abs{\p_x U_1}+\abs{\p_x U_3} \right) \left(\Psi^2+\Xi^2\right) dxdt \leq C \big( \norm{\Phi_0,\Psi_0,W_0}_2^2 +
		\e +\delta^{\frac{1}{2}}\big).   \label{ineq-a-priori}
	\end{align} 	
\end{Prop}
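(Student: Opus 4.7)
\medskip
\noindent\textbf{Proof proposal for \cref{Prop-a-priori}.}
The plan is to carry out the anti-derivative energy method on the system \cref{eq-anti-deriv} in three tiers (zeroth, first and second order derivatives of $(\Phi,\Psi,\Xi)$), closing the estimate by a continuation-argument under the smallness of $\e,\delta,\nu$. The scheme is a weighted $L^2$ energy argument of the type developed in \cite{Matsumura1985,Kawashima1985,Huang2009}, but the three new ingredients, namely the time-dependent shift triple $(\X,\Y,\Z)$, the periodic pieces inherited through the ansatz, and the diffusion wave $\Theta,$ are absorbed through \cref{Lem-shift,Lem-F,Lem-Rt} and the pointwise relation $A\approx B$ in \cref{rel-2}.

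\smallskip
\noindent\textbf{Step 1 (basic estimate).} I would multiply the first equation in \cref{eq-anti-deriv} by a weight of the form $\tfrac{\pt}{\vt}\Phi,$ the second by $\Psi,$ and the third by $\tfrac{1}{\thetat}\Xi,$ mimicking the entropy multiplier used in \cite{Huang2009}. After adding and integrating by parts, the main good terms are a positive $\tfrac{d}{dt}$ of an $L^2$-type quadratic form equivalent to $\|(\Phi,\Psi,\Xi)\|^2,$ a parabolic dissipation $\|(\partial_x\Psi,\partial_x\Xi)\|^2,$ and the crucial shock-weighted term
\begin{equation*}
\int_\R\bigl(|\partial_x U_1|+|\partial_x U_3|\bigr)\bigl(\Psi^2+\Xi^2\bigr)dx,
\end{equation*}
coming from the Lax-entropy sign in the coefficient $\tfrac{\gamma-1}{\vt}\partial_x\tilde u\,\Psi$ and its analogue in the third equation. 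The bad terms split into four groups: (i) shock-interaction errors of the form \cref{cross}, controlled by $C\delta^2 e^{-c_1\delta t}$; (ii) diffusion-wave cross terms such as $\partial_x\tilde u\,\Theta,$ estimated via the $\approx 0$ bookkeeping in \cref{rel-2}; (iii) the forcing $F_1,F_2,F_3$ and remainders $\tilde R_1,\tilde R_2,$ bounded by $C(\e e^{-\alpha t}+\delta^{3/2}e^{-c_1\delta t})$ thanks to \cref{Lem-F,Lem-Rt}; and (iv) the nonlinear $J_1,J_2$ and the truncation error from $\zeta=\partial_x\Xi+\cdots$ in \cref{anti-deriv-2}, which are cubic or higher in $\nu$ and absorbed under $\nu\ll 1.$

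\smallskip
\noindent\textbf{Step 2 (higher-order estimates).} Applying $\partial_x$ and then $\partial_x^2$ to \cref{eq-anti-deriv} and repeating the weighted multiplier argument with multipliers $\partial_x\Phi,\partial_x\Psi,\partial_x\Xi$ and their second derivatives, one obtains the dissipation $\|(\partial_x^2\Psi,\partial_x^2\Xi,\partial_x^3\Psi,\partial_x^3\Xi)\|^2$ and, after using the first equation to trade $\partial_t\partial_x\Phi$ for $\partial_x^2\Psi,$ also the elliptic-type bound $\|\partial_x\Phi\|_1^2$ as in \cite{Kawashima1985}. The forcing terms are controlled by the $H^2$-norm of $F_1$ and the $H^1$-norm of $(F_2,F_3)$ exactly as provided in \cref{est-F}, which is why \cref{Lem-F} is tailored to these regularities. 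The terms $\partial_x\tilde R_i$ are handled by the second half of \cref{ineq-Rt}. Integrating in time and summing the tiers yields \cref{ineq-a-priori}, where the $\delta^{1/2}$ on the right comes from integrating $\delta^{3/2}e^{-c_1\delta t}$ in $t$ over $[0,+\infty).$

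\smallskip
\noindent\textbf{Main obstacle.} The delicate point is \emph{not} any single estimate but the simultaneous management of three competing small parameters. The shock-dissipation term has coefficient $\sim|\partial_x U_i|=O(\delta^2),$ whereas several error terms produced by the diffusion wave and by the interaction between the two shifted viscous shocks are only of size $\delta^{3/2}$ or $\e e^{-\alpha t};$ ensuring that each such term is either integrable in $t$ or can be dominated by the shock-weighted norm using Cauchy-Schwarz with a carefully chosen split (e.g.\ $\delta^{3/2}|\partial_x U_i|^{1/2}\cdot|\partial_x U_i|^{1/2}|\Psi|$) is what requires the hypothesis $\gamma\in(1,2]$ and the comparability \cref{uniform}. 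A secondary difficulty is that the multiplier $\tfrac{\pt}{\vt}$ depends on $\Theta$ and on the periodic solutions $(v_{l,r},u_{l,r},E_{l,r}),$ so time derivatives of the weight produce extra terms; these are controlled by the exponential decay in \cref{decay-per} and the quadratic decay of $\Theta$ from \cref{Thet}, both of which are integrable in $t.$ Once these bookkeeping issues are settled, the closure under small $\nu$ follows by a standard continuation argument.
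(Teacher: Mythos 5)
Your three–tier scheme (zeroth order on the anti-derivatives, then derivatives, closed by continuation in $\nu,\e,\delta$) is the same strategy as the paper, but the concrete multiplier you propose in Step~1 does not work, and the mechanism you invoke for the key good term is wrong in sign. The paper multiplies the three equations of \cref{eq-anti-deriv} by $\Phi$, $\vt\Lt\Psi$ and $R\Lt^2\Xi$ with $\Lt=(\pt-\mu\p_x\ut/\vt)^{-1}$; these weights are chosen precisely so that the hyperbolic cross terms close into exact $x$-derivatives: the coefficient of $\p_x\Phi$ in the second equation becomes exactly $1$, pairing with $-\p_x\Psi\,\Phi$ from the first equation to give $-\p_x(\Phi\Psi)$, and the $\p_x\Xi$/$\p_x\Psi$ pair combines into $\p_x(R\Lt\Psi\Xi)-R\p_x\Lt\,\Psi\Xi$, leaving only terms in $\Psi,\Xi,\p_x\Phi$ with shock weights. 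With your multipliers $\big(\tfrac{\pt}{\vt}\Phi,\ \Psi,\ \tfrac{1}{\thetat}\Xi\big)$ the first pair leaves the non-divergence remainder $\p_x\!\big(\tfrac{\pt}{\vt}\big)\Phi\Psi$, whose coefficient is of size $\abs{\p_x U_1}+\abs{\p_x U_3}$ (plus decaying errors). There is no shock-weighted dissipation for $\Phi$ itself (the good terms produced by the scheme control only $(\abs{\p_x U_1}+\abs{\p_x U_3})^{1/2}\Psi$, $(\abs{\p_x U_1}+\abs{\p_x U_3})^{1/2}\Xi$ and, later, $\p_x\Phi$), and since $\abs{\p_x U_i}$ decays in $x$ but not in $t$, the resulting contribution $\int_0^T\!\!\int_\R(\abs{\p_x U_1}+\abs{\p_x U_3})\abs{\Phi}\abs{\Psi}\,dx\,dt$ is of order $\nu^2\delta\,T$ and cannot be absorbed or made time-integrable; this single term destroys the basic estimate. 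Moreover, your claim that the shock-weighted good term comes ``from the Lax-entropy sign in the coefficient $\tfrac{\gamma-1}{\vt}\p_x\ut\,\Psi$'' is backwards: $\p_x\ut<0$ across compressive shocks, so with multiplier $\Psi$ that term is negative. In the paper the positivity comes from the combined coefficient $\tilde b=-\tfrac12\p_t(\vt\Lt)+(\gamma-1)\Lt\p_x\ut\approx\tfrac12\sum_i\abs{\p_x U_i}L_i^2\big(b_i-2(\gamma-1)L_i^{-1}\big)$, whose margin $(3-\gamma)\pb_m-C\delta>0$ is where the restriction on $\gamma$ genuinely enters (together with \cref{Lem-shocks}), and, for $\Xi$, from $-\Lt\p_t\Lt\geq0$; it is not, as you state, needed for Cauchy--Schwarz splittings of the error terms.

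A secondary, organizational difference: the paper does not differentiate the anti-derivative system twice. It obtains the $\p_x\Phi$ estimate by substituting $\p_x\Psi=\p_t\Phi+F_1$ into the second equation of \cref{eq-anti-deriv} and multiplying by $\p_x\Phi$, and then estimates $(\phi,\psi,\zeta)$ and $\p_x(\phi,\psi,\zeta)$ directly from the difference of \cref{NS} and \cref{eq-tilde}, so that only $\norm{F_1}_2$ and $\norm{F_2,F_3}_1$ from \cref{est-F} are ever used. As literally written, your Step~2 (applying $\p_x^2$ to the second and third equations and testing with second derivatives) would call for $\p_x^2F_2,\p_x^2F_3$, which \cref{Lem-F} does not provide; this is fixable by an integration by parts against the dissipation, but the paper's route avoids the issue altogether. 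The main defect to repair, however, is the Step~1 multiplier: you need weights of the type $\vt\Lt$ and $R\Lt^2$ (as in the paper, following \cite{Huang2009}) so that the $\Phi$-cross terms are exact derivatives and the shock-weighted quadratic form in $(\Psi,\Xi)$ is positive.
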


\vspace{0.2cm}

By \cref{a-priori-assumption} and the Sobolev inequality, if $ \delta, \e $ and $ \nu $ are small, one has that 
$$ \sup\limits_{t\in[0,T]} \norm{(\rho,u,\theta)}_{L^\infty(\R)} \leq C, $$ and
\begin{align*}
	\inf_{x\in\R,t\geq0} \vt \geq \frac{\vb_m}{2}, \quad \inf_{ x\in\R,  t\in[0,T]} v \geq \frac{\vb_m}{4}, \quad \inf_{x\in\R,t\geq 0} \thetat \geq \frac{\thetab_m}{2}, \quad \inf_{x\in\R,t\in[0,T]} \theta \geq \frac{\thetab_m}{4}.
\end{align*}
Moreover, the higher-order terms $ J_1 $ and $ J_2 $ in \cref{eq-J} satisfy 
\begin{equation}
\label{ineq-J}
\begin{aligned}
& \abs{J_1} \leq C \left( \abs{\p_x \Psi}^2+ \abs{\p_x \Phi}^2 + \abs{\p_x^2 \Psi}\abs{\p_x \Phi} +\abs{\p_x \Xi}^2 + \abs{\p_x \ut}\abs{\Psi}^2\right),\\
& \abs{J_2} \leq C \left(\abs{\p_x \Phi}^2+\abs{\p_x \Psi}^2+\abs{\p_x \Xi}^2 + \abs{\p_x \ut}\abs{\Psi}^2+ \abs{\p_x \Psi}\abs{\p_x^2 \Psi} + \abs{\p_x \zeta}\abs{\p_x \Phi}\right),
\end{aligned}
\end{equation}
and it holds that
\begin{equation}\label{low-bdd}
	\begin{aligned}
		& \inf\limits_{x,t} \Big(\pt-\frac{\mu \p_x \ut}{\vt}\Big) \geq  c \inf\limits_{x,t} \thetat  -C\delta -C\varepsilon \geq \frac{c\thetab_m}{4}>0, \\
		& \inf\limits_{x,t} \Big(p^\sharp-\frac{\mu \p_x u^\sharp}{v^\sharp}\Big) \geq \frac{c\thetab_m}{4} \quad \text{ and } \quad \inf\limits_{x,t} \Big(\ptt-\frac{\mu \p_x \utt}{\vtt}\Big) \geq \frac{c\thetab_m}{4}.
	\end{aligned}	
\end{equation} 
For later use, we denote
\begin{equation*}
	\Lt := \Big(\pt-\frac{\mu \p_x \ut}{\vt} \Big)^{-1}, \quad L^\sharp := \Big(p^\sharp-\frac{\mu\p_x u^\sharp}{v^\sharp} \Big)^{-1}, \quad   \Ltt :=  \Big(\ptt-\frac{\mu\p_x\utt}{\vtt} \Big)^{-1}.
\end{equation*}
The proof of \cref{Prop-a-priori} consists of the following series of lemmas.

\begin{Lem}\label{Lem-energy-1}
	Under the assumptions of \cref{Prop-a-priori},
	there exist $\delta_0 >0,\e_0 >0 $ and $ \nu_0>0 $ such that if $ \delta <\delta_0, \e<\e_0 $ and $ \nu < \nu_0, $ then
     \begin{align}
     & \sup_{t\in[0,T]}\norm{ \Phi, \Psi, \Xi }^2 +\int_{0}^T \norm{ \p_x \Psi, \p_x \Xi}^2 dt +\int_{0}^T \int_{\R} \left(\abs{\p_x U_1}+\abs{\p_x U_3} \right) \left(\Psi^2+\Xi^2\right) dxdt \notag \\
     &\qquad \leq C \Big\{ \norm{\Phi_0, \Psi_0, W_0}^2  +  \e + \delta^{\frac{1}{2}} + \big( \nu +\delta^{\frac{1}{2}}\big) \int_{0}^T \norm{\p_x \Phi, \p_x \zeta, \p_x^2 \Psi }^2 dt \Big\}. \label{ineq-energy-1}
     \end{align} 
\end{Lem}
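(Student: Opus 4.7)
The plan is to derive a weighted $L^{2}$-type energy estimate for the anti-derivative variables $(\Phi,\Psi,\Xi)$ by the classical anti-derivative technique, in the spirit of \cite{Matsumura1985,Huang2009}. I would multiply the second equation of \cref{eq-anti-deriv} by $\Psi$, the third equation by $\frac{R\Xi}{(\gamma-1)\thetat}$, and combine them with $\frac{\pt}{\vt}\Phi$ times the first equation, arranged so that, after integration by parts, the $\p_x\Phi$ contributions are re-expressed as the time derivative of $\frac{\pt}{2\vt}\Phi^{2}$ and the cross terms $\frac{R}{\vt}\p_x\Xi\cdot\Psi$ and $(\pt-\mu\p_x\ut/\vt)\p_x\Psi\cdot\frac{R\Xi}{(\gamma-1)\thetat}$ cancel. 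This should yield an identity of the schematic form
\begin{equation*}
\frac{d}{dt}\int_{\R}\Big(\tfrac{\pt}{2\vt}\Phi^{2}+\tfrac{1}{2}\Psi^{2}+\tfrac{R\,\Xi^{2}}{2(\gamma-1)\thetat}\Big)dx+\int_{\R}\Big(\tfrac{\mu}{\vt}|\p_x\Psi|^{2}+\tfrac{\kappa}{\vt\thetat}|\p_x\Xi|^{2}\Big)dx+\mathcal{B}=\mathcal{S},
\end{equation*}
where $\mathcal{B}$ collects the quadratic terms weighted by $\p_x\ut$ and $\p_t\ut$, and $\mathcal{S}$ collects the contributions of $F_{i}$, $\Rt_{i}$, the $J_{i}$, and the time derivatives of the weights $\pt/\vt$ and $1/\thetat$.

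Next I would extract from $\mathcal{B}$ the positive shock dissipation $c\int_{\R}(|\p_x U_{1}|+|\p_x U_{3}|)(\Psi^{2}+\Xi^{2})\,dx$. The point is to use $\p_t\ut\approx -s_{1}\p_x U_{1}-s_{3}\p_x U_{3}$ modulo $\error$ (via \cref{equiv-1,Lem-shift}) together with $(u_{i}^{S})'<0$ and $s_{1}<0<s_{3}$, so that the suspicious linear pieces $\p_t\ut\cdot\Psi\cdot\tfrac{R\Xi}{(\gamma-1)\thetat}$ and $\tfrac{\gamma-1}{\vt}\p_x\ut\,\Psi^{2}$ align in sign with the shock dissipation after cancellation. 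The cross-profile interaction between the two shocks, bounded by $C\delta^{2}e^{-c_{1}\delta t-c_{0}\delta|x|}$ from \cref{cross}, contributes at most $O(\delta^{1/2})$ through the elementary computation $\int_{0}^{\infty}\delta^{3/2}e^{-c_{1}\delta t}\,dt=O(\delta^{1/2})$ and Cauchy--Schwarz in $x$.

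For the source $\mathcal{S}$, the $F_{i}$ contributions are handled via $\int_{0}^{T}\|F_{i}\|\,\|(\Psi,\Xi)\|\,dt\leq\sup_{t\leq T}\|(\Psi,\Xi)\|\int_{0}^{T}\|F_{i}\|\,dt$; by \cref{Lem-F} this integral is $C(\e+\delta^{1/2})$, which, together with an absorption of $\sup_{t\leq T}\|(\Phi,\Psi,\Xi)\|^{2}$ into the left-hand side, produces the announced $\e+\delta^{1/2}$ term in \cref{ineq-energy-1}. The remainder $\Rt_{i}$ is controlled using the pointwise bounds \cref{rel-2} from \cref{Lem-Rt} together with $|\eta|\leq C\e$ from \cref{Lem-variables}; note that both the purely $\delta$-type and the $\eta$-type Gaussian pieces integrate in $t$ to quantities of size $O(\delta^{1/2})$ and $O(\e)$, respectively. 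Finally, the higher-order contributions $J_{1}\Psi$ and $\tfrac{R\Xi}{(\gamma-1)\thetat}J_{2}$ are bounded via \cref{ineq-J}, an $L^{\infty}$ interpolation, and the a priori smallness $\nu<\nu_{0}$, producing the factor $(\nu+\delta^{1/2})\int_{0}^{T}\|\p_x\Phi,\p_x\zeta,\p_x^{2}\Psi\|^{2}\,dt$ on the right-hand side.

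The main obstacle I expect is the treatment of the anti-dissipative linear terms $\p_t\ut\,\Psi$ and $\p_x(\p_x\ut\,\Psi)$ appearing in the $\Xi$-equation of \cref{eq-anti-deriv}: a naive estimate would produce terms of size $\delta$ that cannot be absorbed by the shock dissipation once paired with $\Xi$. The resolution is to pair them with the canonical convex multiplier $\tfrac{R\Xi}{(\gamma-1)\thetat}$ and to combine with the $\p_x\ut\,\Psi^{2}$ piece arising from multiplying the $\Psi$-equation by $\Psi$, so that, up to commutators of order $O(\delta^{2}+\e)$, they reproduce exactly the positive shock-weighted quadratic form $c\int_{\R}(|\p_x U_{1}|+|\p_x U_{3}|)(\Psi^{2}+\Xi^{2})\,dx$ appearing on the left of \cref{ineq-energy-1}. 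Once this sign is secured, the estimate closes by time integration together with the bounds of \cref{Lem-F,Lem-Rt,Lem-shift} and the smallness of $\delta$, $\e$ and $\nu$.
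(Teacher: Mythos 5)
Your overall strategy is the right one and coincides with the paper's: multiply the three anti-derivative equations by suitable weighted multipliers, integrate by parts so that the principal part reduces to a time derivative of a positive-definite quadratic form plus the parabolic dissipation $\|\p_x\Psi\|^2,\|\p_x\Xi\|^2$, extract the shock-weighted positivity $\int(|\p_xU_1|+|\p_xU_3|)(\Psi^2+\Xi^2)$ from the $\p_t\ut$ and $\p_x\ut$ terms, and close by controlling $F_i,\Rt_i,J_i$ via \cref{Lem-F}, \cref{Lem-Rt} and the smallness of $\nu,\e,\delta$. However, the specific multipliers you wrote down do not achieve the key cancellation, and this is a genuine gap.

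Concretely, you propose $\cref{eq-anti-deriv}_1\cdot\tfrac{\pt}{\vt}\Phi+\cref{eq-anti-deriv}_2\cdot\Psi+\cref{eq-anti-deriv}_3\cdot\tfrac{R\,\Xi}{(\gamma-1)\thetat}$ and claim that the off-diagonal terms $\tfrac{R}{\vt}\p_x\Xi\cdot\Psi$ (from the second equation) and $(\pt-\mu\p_x\ut/\vt)\p_x\Psi\cdot\tfrac{R\,\Xi}{(\gamma-1)\thetat}$ (from the third) cancel after integration by parts. They do not: since $\pt=R\thetat/\vt$, the coefficient of $\Xi\,\p_x\Psi$ is, to leading order, $\tfrac{R^2}{(\gamma-1)\vt}$, whereas the coefficient of $\Psi\,\p_x\Xi$ is $\tfrac{R}{\vt}$. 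These differ by the $O(1)$ factor $c_v=R/(\gamma-1)$, so after integration by parts you are left with an uncontrolled term $(c_v-1)\int\tfrac{R}{\vt}\,\Xi\,\p_x\Psi\,dx$. Neither $\int_0^T\|\Xi\|^2dt$ nor $\int_0^T\|\Psi\|^2dt$ is available as a priori dissipation, so this residue cannot be absorbed, no matter how small $\delta,\e,\nu$ are. (There is also a smaller internal inconsistency: the multiplier $\tfrac{R}{(\gamma-1)\thetat}$ applied to $\tfrac{R}{\gamma-1}\p_t\Xi$ produces the energy density $\tfrac{R^2}{2(\gamma-1)^2\thetat}\Xi^2$, not $\tfrac{R}{2(\gamma-1)\thetat}\Xi^2$ as you claim.)

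The paper resolves this by building the weight into the multipliers exactly: it takes $\cref{eq-anti-deriv}_1\cdot\Phi+\cref{eq-anti-deriv}_2\cdot\vt\Lt\Psi+\cref{eq-anti-deriv}_3\cdot R\Lt^2\Xi$, with $\Lt=\bigl(\pt-\tfrac{\mu\,\p_x\ut}{\vt}\bigr)^{-1}$. Then $\vt\Lt\cdot\bigl(\tfrac{\pt}{\vt}-\tfrac{\mu\p_x\ut}{\vt^2}\bigr)=1$ and $R\Lt^2\cdot\bigl(\pt-\tfrac{\mu\p_x\ut}{\vt}\bigr)=R\Lt$, so both pairs of cross terms recombine exactly into $\p_x(-\Phi\Psi)$ and $R\Lt\,\p_x(\Psi\Xi)$, leaving only controllable commutators $\p_x(R\Lt)\Psi\Xi$, etc. Your proposed energy functional is, up to a global rescaling by $\pt/\vt$, the same as the paper's $N_1$, so the slip is in the algebra of the multipliers rather than in the underlying idea; but as written, the claimed cancellation fails and the proof does not close. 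If you instead multiply the third equation by $\tfrac{\Xi}{\thetat}$ (so that the coefficient of $\Xi\p_x\Psi$ becomes $\tfrac{\pt}{\thetat}=\tfrac{R}{\vt}$ to leading order), the leading-order cross terms do cancel, and only the lower-order $\mu\p_x\ut$ corrections remain; those are of size $O(\delta\,|\p_xU_1|+\delta\,|\p_xU_3|)$ pointwise and can be absorbed by the shock dissipation, which is exactly the mechanism the paper uses.
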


\begin{proof}
With direct calculations, $\cref{eq-anti-deriv}_1 \cdot \Phi + \cref{eq-anti-deriv}_2 \cdot \vt \Lt\Psi + \cref{eq-anti-deriv}_3 \cdot R\Lt^2 \Xi$ gives that 
\begin{equation}\label{eq-energy-1}
\begin{aligned}
\p_t N_1+ \sum_{i=2}^{4} N_i &= \p_x(\cdots) - F_1 \Phi + \big(J_1-F_2-\Rt_1 \big)\vt \Lt \Psi  \\
&\quad + \big( J_2-F_3+\ut F_2 -\Rt_2 + \ut \Rt_1 \big)R\Lt^2 \Xi,
\end{aligned}
\end{equation} 
where $\p_x (\cdots)$ vanishes after integration on $ \R $ and
\begin{equation}\label{N1-4}
\begin{aligned}
N_1& =\frac{1}{2} \big( \Phi^2 +\vt \Lt \Psi^2 +\frac{R^2 }{\gamma-1} \Lt^2 \Xi^2 \big),\\
N_2& = \tilde{b} \Psi^2 + \mu \p_x \Lt  \Psi\p_x \Psi +\mu \Lt (\p_x \Psi)^2 \quad \text{ with } \quad  \tilde{b} =-\frac{1}{2} \p_t(\vt \Lt) +(\gamma-1 )\Lt \p_x \ut,\\
N_3& =-\frac{R^2}{\gamma-1} \Lt \p_t\Lt \Xi^2 +\p_x \Big(\frac{\kappa R}{\vt}\Lt^2 \Big) \Xi \p_x \Xi +\frac{\kappa R}{\vt}\Lt^2 (\p_x \Xi)^2,\\
N_4&= R\big(\Lt^2 \p_t \ut -\p_x \Lt\big) \Psi \Xi +\frac{\kappa R}{v\vt } \p_x \thetat \Lt^2 \p_x \Phi \Xi  + \kappa(\gamma-1) \p_x \ut \Psi \p_x\Big( \frac{\Lt^2 }{\vt} \Xi\Big).
\end{aligned}
\end{equation}
Now we estimate the terms in \cref{eq-energy-1} one by one.
First, similar to the proof of \cref{Lem-Rt}, one can verify that
\begin{align}
& \p_t \pt \approx \p_t p^\sharp = \p_t \ptt + \error \approx \p_t \left(P_1+P_3-\pb_m \right) + \error, \notag \\
& \p_t \Big( \frac{\mu\p_x \ut}{\vt}\Big) \approx \p_t \Big(\frac{\mu\p_x u^\sharp}{v^\sharp}\Big) \approx \p_t \Big(\frac{\mu\p_x\utt}{\vtt} \Big) + \error \approx \p_t \Big(\frac{\mu\p_x U_1}{V_1}+\frac{\mu\p_x U_3}{V_3}\Big) +\error, \\
& \Lt \approx  L^\sharp \approx L_1 +L_3 +\error,  \quad \p_t \Lt \approx \p_t L_1+\p_t L_3 +\error, \quad \p_x \Lt 
\approx\p_x L_1+\p_x L_3 +\error, \label{equiv-2}
\end{align}
where $ L_i :=(P_i-\frac{\mu \p_x U_i}{V_i})^{-1}=(b_i-s_i^2 V_i)^{-1} $ with $ b_i = \pb_m + s_i^2 \vb_m $ for $ i=1,3, $
and the coefficient $ \tilde{b} $ in $ N_2 $ satisfies that
\begin{align*}
\tilde{b} & \approx -\frac{1}{2} \p_t(v^\sharp L^\sharp) +(\gamma-1 )L^\sharp \p_x u^\sharp = -\frac{1}{2} \p_t(\vtt \Ltt) +(\gamma-1 )\Ltt \p_x \utt + \error \\
& \approx \sum_{i=1,3} \big[ -\frac{1}{2} \p_t (L_iV_i) +(\gamma-1) L_i \p_x U_i \big] +\error = \frac{1}{2} \sum_{i=1,3}  \abs{\p_x U_i} L_i^2 \left( b_i-2(\gamma-1)L_i^{-1}\right)+\error. 
\end{align*}
For easy reading, we first ignore the terms arising from the relation $ ``\approx 0" $ and postpone their estimates at the end of the proof. Thus, it holds that
\begin{align*}
	\Lt & \geq \sum_{i=1,3} (\pb_m -C\delta_i)^{-1} - \norm{\error}_{L^\infty(\R)} \geq c - C \e e^{-\alpha t}-C\delta^{\frac{3}{2}} e^{-c_1\delta t}, \\
	\tilde{b} & \geq c \sum_{i=1,3} \abs{\p_x U_i} \big((3-\gamma)\pb_m-C\delta_i \big) - C \e e^{-\alpha t}-C\delta^{\frac{3}{2}} e^{-c_1\delta t},
\end{align*}
and $ \mu \p_x \Lt \Psi \p_x \Psi \approx \sum\limits_{i=1,3} \mu s_i^2 L_i^2 \p_x V_i \Psi \p_x \Psi +\error \Psi \p_x \Psi. $ Since $ \abs{\p_x V_i} \leq C\abs{\p_xU_i} \leq C\delta_i, $ then if $ \e $ and $ \delta $ are small, one can get that
\begin{equation}\label{ineq-N2}
\begin{aligned}
\int_0^T\int_\R N_2 dxdt 
& \geq c \int_0^T \int_\R (\abs{\p_x U_1}+\abs{\p_x U_3}) \abs{\Psi}^2 dx dt + c \int_0^T \norm{\p_x\Psi}^2 dt \\
& \quad - C (\e +\delta^{\frac{1}{2}}) \sup_{t\in[0,T]} \norm{\Psi}^2.
\end{aligned}
\end{equation}

By \cref{equiv-2} and the fact that $ \abs{\p_x \Theta}^2 \approx 0,~ \p_t L_i = s_i^2 L_i^2 s_i \p_x V_i = - s_i^2 L_i^2 \abs{\p_x U_i} $ for $ i=1,3, $ it holds that
\begin{align*}
-\frac{R}{\gamma-1} \Lt \p_t \Lt \Xi^2 &\geq c (\abs{\p_x U_1}+\abs{\p_x U_3}) \Xi^2 - \norm{\error}_{L^\infty(\R)} \Xi^2,\notag\\
\abs{\p_x\Big(\frac{\kappa R}{\vt}\Lt^2 \Big) \Xi \p_x \Xi} & \leq C \big(\abs{\p_x \vt} + \abs{\p_x \Lt} \big) \abs{\Xi} \abs{\p_x \Xi} \\
&\leq \frac{\kappa R}{2\vt}\Lt^2 (\p_x \Xi)^2 + C\delta \left(\abs{\p_x U_1} +\abs{\p_x U_3}  \right) \Xi^2 + \norm{\error}_{L^\infty(\R)}^2 \Xi^2. \notag
\end{align*}
Then if $ \e $ and $ \delta $ are small, it holds that
\begin{equation}\label{ineq-N3}
\int_0^T\int_\R N_3 dxdt \geq c \int_0^T \norm{\left(\abs{\p_x U_1}+\abs{\p_x U_3} \right)^{\frac{1}{2}} \Xi, \p_x\Xi }^2 dt - C \big(\e +\delta^{\frac{1}{2}}\big) \sup_{t\in[0,T]} \norm{\Xi}^2.
\end{equation}

By the facts that $ \abs{\p_x\Theta_i} \leq C\abs{\p_x U_i}, i=1,3 $ and $ \delta_0^{-\frac{1}{2}} \abs{\p_x\Theta}^2 \approx 0, $ one can verify  
\begin{align}
	& \Lt^2 \p_t \ut -\p_x \Lt = \Lt^2 \Big[ \p_t \ut +\p_x \Big(\pt -\frac{\mu \p_x \ut}{\vt}\Big) \Big] = \Lt^2 \big(\p_x F_2 +\p_x \Rt_1\big) \approx \Lt^2 \p_x F_2 =\error, \notag \\
	%
	%
	& \abs{\frac{\kappa}{v\vt } R\Lt^2 \p_x \thetat\p_x \Phi \Xi} \leq  \Big(\abs{\p_x \thetatt} + \norm{\error}_{L^\infty(\R)}\Big) \abs{\p_x\Phi} \abs{\Xi}  \notag \\
	& \qquad \leq C\delta_0^{\frac{1}{2}}  \left(\abs{\p_x U_1}+\abs{\p_x U_3}\right) \Xi^2 + C\delta_0^{\frac{1}{2}} \abs{\p_x \Phi}^2  + \norm{\error}_{L^\infty(\R)} \left(\Xi^2 + \abs{\p_x \Phi}^2\right). \notag 
\end{align} 
Thus, it holds that
\begin{equation}\label{ineq-N4}
	\begin{aligned}
		\int_0^T\int_\R \abs{N_4} dxdt  &\leq C\big(\e +\delta^{\frac{1}{2}}\big) \int_0^T \norm{\left(\abs{\p_x U_1}+\abs{\p_x U_3}\right)^{\frac{1}{2}} \left(\Psi, \Xi \right), \p_x \Phi, \p_x\Xi }^2 dt \\
		& \quad + C \big(\e+\delta^{\frac{1}{2}}\big) \sup_{t\in[0,T]} \norm{\Psi,\Xi}^2.
	\end{aligned} 
\end{equation} 

It follows from \cref{ineq-J} that
\begin{align}
\abs{\vt \Lt J_1 \Psi} &\leq C \norm{\Psi}_{L^\infty(\R)} \big[ \abs{\p_x \Psi}^2+\abs{\p_x \Phi}^2 + \abs{\p_x \Xi}^2 + \abs{\p_x^2 \Psi}^2 \notag \\
& \qquad\qquad\qquad\quad + \big(\abs{\p_x U_1} + \abs{\p_x U_3}+\norm{\error}_{L^\infty(\R)} \big) \Psi^2 \big], \label{ineq-J1}\\
\abs{R \Lt^2 J_2 \Xi}&\leq C \norm{\Xi}_{L^\infty(\R)} \big[ \abs{\p_x\Phi}^2 + \abs{\p_x\Psi}^2 + \abs{\p_x \Xi}^2 + \abs{\p_x^2 \Psi}^2 \notag \\
& \qquad\qquad\quad\qquad + \big(\abs{\p_x U_1} + \abs{\p_x U_3}+\norm{\error}_{L^\infty(\R)} \big) \Psi^2 + \norm{\p_x \zeta}^2 \big]
\label{ineq-J2}.
\end{align}
Then one has that
\begin{align}
& \int_0^T \int_\R \abs{\text{RHS of \cref{eq-energy-1}}} dxdt \notag \\
& \quad \leq C \int_0^T \norm{F_1,F_2,F_3} \norm{\Phi, \Psi, \Xi} dt + C \nu \int_0^T \Big( \norm{\p_x\left(\Phi,\Psi,\Xi\right)}^2 + \notag \\
& \qquad \qquad \norm{\p_x^2 \Psi, \p_x \zeta}^2  + \norm{\left(\abs{\p_x U_1}+\abs{\p_x U_3} \right)^{\frac{1}{2}} \Psi}^2 \Big) dt + C \big(\e+\delta^{\frac{1}{2}}\big) \sup_{t\in[0,T]} \norm{\Psi}^2 \notag \\
& \quad \leq C \big( \e+\delta^{\frac{1}{2}}\big) + C \big( \e+\delta^{\frac{1}{2}}\big) \sup_{t\in[0,T]} \norm{\Phi, \Psi, \Xi}^2 + C \nu \int_0^T \Big( \norm{\p_x\left(\Phi,\Psi,\Xi\right)}^2+ \notag \\
& \qquad \qquad \norm{\left(\abs{\p_x U_1}+\abs{\p_x U_3} \right)^{\frac{1}{2}} \Psi}^2 \Big)dt + C \nu \int_0^T \norm{\p_x^2 \Psi, \p_x \zeta}^2 dt. \label{ineq-RHS-1}
\end{align}

At last, we deal with all the error terms arising from the relation  ``$\approx$'' that were postponed in the previous estimates, denoted by $ \Rt $. Same as \cite[Lemma~3.1]{Huang2009}, the integrals of them on $ \R\times [0,T] $ can be bounded by 
\begin{align}
& \int_{0}^{T}\int_{\R} \abs{\Rt} \left( \abs{\Phi} + \abs{\Psi}+\abs{\Xi}+ \abs{\p_x \Phi}+ \abs{\p_x \Psi}+ \abs{\p_x \Xi} \right)dx  dt \notag \\
& \quad \leq C\int_{0}^{T} \Big[  (\delta^{\frac{3}{2}}+|\eta|\delta ) e^{-c\delta t} +\frac{|\eta|}{(1+t)^{ \frac{5}{4}}} + (\delta+\abs{\eta}) e^{-ct} \Big] \norm{ \Phi, \Psi, \Xi, \p_x \Phi, \p_x \Psi, \p_x \Xi }  dt \notag \\
& \quad \leq C \nu \int_0^T \Big[  (\delta^{\frac{3}{2}}+|\eta|\delta ) e^{-c\delta t} +\frac{|\eta|}{(1+t)^{ \frac{5}{4}}} + (\delta+\abs{\eta}) e^{-ct} \Big] dt \notag \\
& \quad \leq C (\delta^{\frac{1}{2}} + \abs{\eta}) \leq C (\delta^{\frac{1}{2}}+\e) . \label{ineq-approx}
\end{align}
Thus, collecting the estimates \cref{ineq-N2,ineq-N3,ineq-N4}, \cref{ineq-RHS-1,ineq-approx}, one can get \cref{ineq-energy-1} if $ \e,\delta $ and $ \nu $ are small.
 
\end{proof}

\begin{Lem}\label{Lem-energy-2}
	Under the assumptions of \cref{Prop-a-priori},
	there exist small $\delta_0 >0,\e_0 >0 $ and $ \nu_0>0 $ such that if $ \delta <\delta_0, \e<\e_0 $ and $ \nu < \nu_0, $ then
	\begin{align}
		& \sup_{t\in[0,T]} (\norm{\Phi}^2_1 + \norm{ \Psi, \Xi }^2) +\int_{0}^T \int_{\R} \left(\abs{\p_x U_1}+\abs{\p_x U_3} \right) \left(\Psi^2+\Xi^2\right) dxdt \label{ineq-energy-2} \\
		&\quad +\int_{0}^T \norm{ \phi, \psi, \p_x \Xi}^2 dt \leq C \Big\{ \norm{\Phi_0}^2_1 + \norm{ \Psi_0, W_0}^2 + \e +\delta^{\frac{1}{2}}  + \big( \nu +\delta^{\frac{1}{2}}\big) \int_{0}^T \norm{ \p_x \zeta, \p_x \psi }^2 dt\Big\}. \notag
	\end{align}
\end{Lem}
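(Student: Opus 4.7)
In view of \cref{Lem-energy-1}, the only quantities in \cref{ineq-energy-2} that are genuinely new are $\sup_{t\in[0,T]}\norm{\phi(t)}^2$ (contained in $\norm{\Phi}_1^2$) and $\int_0^T\norm{\phi}^2\,dt$, where $\phi:=\p_x\Phi$; all others, including $\int_0^T\norm{\psi,\p_x\Xi}^2\,dt$, the shock-weighted integrals, and $\sup_t\norm{\Psi,\Xi}^2$, are already supplied by \cref{ineq-energy-1}. The strategy is to multiply the momentum equation $\cref{eq-anti-deriv}_2$ by $\phi$, extract a $\int a\,\phi^2\,dx$ dissipation, and package the result as the time derivative of a Lyapunov functional equivalent to $\norm{\phi}^2$ modulo lower-order terms.

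After multiplying by $\phi$ and integrating over $\R$, two manipulations are essential. The time-derivative term is recast by integrating by parts in $x$ and invoking $\p_t\Phi=\psi-F_1$ from $\cref{eq-anti-deriv}_1$:
\[
\int \phi\,\p_t\Psi\,dx = \tfrac{d}{dt}\!\int \phi\,\Psi\,dx + \int \psi^2\,dx + \int \p_x F_1\cdot\Psi\,dx.
\]
For the viscous term $\int(\mu/\vt)\,\phi\,\p_x^2\Psi\,dx = \int(\mu/\vt)\,\phi\,\p_x\psi\,dx$, I would differentiate $\cref{eq-anti-deriv}_1$ in $x$ to obtain the identity $\p_x\psi = \p_t\phi + \p_x F_1$, converting the viscous term into an exact time derivative:
\[
\int \tfrac{\mu}{\vt}\,\phi\,\p_x\psi\,dx = \tfrac{d}{dt}\!\Big(\tfrac{1}{2}\!\int \tfrac{\mu}{\vt}\,\phi^2\,dx\Big) - \tfrac{1}{2}\!\int \p_t\!\Big(\tfrac{\mu}{\vt}\Big)\,\phi^2\,dx + \int \tfrac{\mu}{\vt}\,\phi\,\p_x F_1\,dx.
\]
Combining these produces the key identity
\[
\tfrac{d}{dt}\!\int\!\Big(\phi\,\Psi - \tfrac{\mu}{2\vt}\,\phi^2\Big)\,dx + \int a\,\phi^2\,dx = \int \psi^2\,dx + \mathcal{R}(t),
\]
with $a = \pt/\vt - \mu\,\p_x\ut/\vt^2 \ge c_0>0$ by \cref{low-bdd}, and $\mathcal{R}(t)$ gathering $\int c\,\phi\,\Psi$, $\int b\,\phi\,\p_x\Xi$, $\int \p_x F_1\cdot\Psi$, $\int(\mu/\vt)\,\phi\,\p_x F_1$, $\tfrac{1}{2}\!\int\p_t(\mu/\vt)\,\phi^2$, together with $-\int \phi\,(J_1 - F_2 - \Rt_1)$.

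Introduce the functional $\mathcal{L}(t):=\int\!\big(\tfrac{\mu}{2\vt}\,\phi^2 - \phi\,\Psi\big)\,dx$. Since $\mu/\vt\ge c>0$, Young's inequality gives $\mathcal{L}(t)\ge c\,\norm{\phi(t)}^2 - C\,\norm{\Psi(t)}^2$. Integrating the identity above over $(0,t)$ and bounding $\mathcal{R}$ term-by-term yields the desired estimate: $F_i$- and $\p_x F_i$-contributions are controlled by \cref{Lem-F}; $\int c\,\phi\,\Psi$ is split via $\abs{c}\le C(\abs{\p_x U_1}+\abs{\p_x U_3})+\error$ so that the $\Psi^2$-part pairs with the shock-weighted dissipation of \cref{ineq-energy-1}; $\int b\,\phi\,\p_x\Xi$ is split by Young's inequality to absorb the $\phi^2$-part into $\int a\,\phi^2$ and the $(\p_x\Xi)^2$-part into \cref{ineq-energy-1}; the coefficient $\p_t(\mu/\vt)$ is $\mathcal{O}(\delta)+\error$ since $\p_t\vt\approx \p_x\ut+\p_x F_1$; $\int\phi\,\Rt_1$ is treated as in \cref{ineq-approx} using \cref{Lem-Rt}; and $\int\phi\,J_1$ is bounded via \cref{ineq-J} together with the a priori smallness $\nu$, yielding a contribution of the shape $\nu\!\int_0^T\!\norm{\p_x\psi}^2\,dt$. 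Taking $\sup_{t\in[0,T]}$, adding the result to \cref{ineq-energy-1}, and absorbing the $(\nu+\delta^{1/2})\!\int_0^T\!\norm{\phi}^2\,dt$ on the right-hand side of \cref{ineq-energy-1} into the newly generated $\int_0^T\!\int a\,\phi^2\,dx\,dt$ on the left (possible once $\delta,\nu,\e$ are sufficiently small) delivers \cref{ineq-energy-2}.

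The principal obstacle is the viscous term $\int(\mu/\vt)\,\phi\,\p_x^2\Psi\,dx$: a naive application of Young's inequality would produce an $\int(\p_x\psi)^2$ contribution with an unbounded prefactor, incompatible with the sharp coefficient $(\nu+\delta^{1/2})$ demanded by \cref{ineq-energy-2}. The substitution $\p_x\psi=\p_t\phi+\p_x F_1$ sidesteps this by converting the term into an exact $\tfrac{d}{dt}$ plus $\mathcal{O}(\delta)$-plus-exponentially-decaying errors, which is the technical core of the lemma.
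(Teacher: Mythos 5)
Your proposal follows essentially the same route as the paper: substitute $\p_x\Psi=\p_t\Phi+F_1$ (equivalently $\p_x\psi=\p_t\phi+\p_xF_1$) into $\cref{eq-anti-deriv}_2$, multiply by $\p_x\Phi$, convert the viscous term into an exact time derivative $\tfrac{d}{dt}\int\tfrac{\mu}{2\vt}\phi^2\,dx$, extract the dissipation $\int\tfrac{1}{\vt\Lt}\phi^2\,dx$, and close by pairing the residual $\int\psi^2$ and shock-weighted $\Psi^2$ terms with \cref{Lem-energy-1}, which is precisely the structure of the paper's identity \cref{eq-energy-2} and estimate \cref{ineq-lem2}. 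The only blemish is a sign slip in your displayed ``key identity'' (it should read $\tfrac{d}{dt}\int(\tfrac{\mu}{2\vt}\phi^2-\phi\Psi)\,dx+\int a\,\phi^2\,dx=\int\psi^2\,dx+\mathcal{R}$), but your subsequent definition of $\mathcal{L}$ carries the correct sign, so the argument goes through.
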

\begin{proof}
Taking $ \p_x \Psi=\p_t \Phi +F_1 $ into $\eqref{eq-anti-deriv}_2, $ 
and then multiplying the resulting equation by $\p_x \Phi, $ one has that
\begin{equation}\label{eq-energy-2}
	\begin{aligned}
	&\p_t\Big(\frac{\mu}{2\vt} \abs{\p_x \Phi}^2 \Big) - \p_t\Big(\frac{\mu}{2\vt} \Big) \abs{\p_x \Phi}^2 - \p_x \Phi\p_t \Psi+\frac{1}{\vt \Lt} \abs{\p_x \Phi}^2 \\
	&\qquad = \Big(\frac{R}{\vt} \p_x \Xi +\frac{\gamma-1}{\vt}\p_x \ut \Psi -J_1+\Rt_1  -\frac{\mu  }{\vt}\p_xF_1+F_2\Big)\p_x \Phi.
\end{aligned}
\end{equation}
Since $ \p_x \Phi \p_t \Psi =\p_t(\p_x\Phi \Psi)-\p_x (\p_t\Phi\Psi) + \abs{\p_x \Psi}^2 -F_1 \p_x \Psi $ and by the fact that if $\delta >0 $ and $ \e >0 $ are small,
\begin{align*}
	 - \p_t\Big(\frac{\mu}{2\vt} \Big)+\frac{1}{2\vt \Lt} &= \frac{\pt}{2\vt}+\frac{\mu (\p_t\vt-\p_x\ut)}{2\vt^2} \geq c\thetab_m - C\norm{ \p_x F_1}_{L^{\infty}(\R)}\geq c\thetab_m - C(\e+\delta^{\frac{3}{2}}),
\end{align*}
integrating \cref{eq-energy-2} over $ \R\times[0,T] $ yields that
\begin{align}
	& \int_\R \big(\frac{\mu}{2\vt} \abs{\p_x \Phi}^2 -\p_x \Phi \Psi \big)(x,T) dx + \int_0^T \int_\R \frac{1}{4\vt\Lt}\abs{\p_x \Phi}^2 dx dt \notag\\
	& \qquad \leq C \big(\norm{\Phi_0}_1^2+\norm{\Psi_0}^2 \big) + C \int_0^T \norm{\p_x \Psi, \p_x\Xi}^2 dt  \notag\\
	& \qquad \quad + C \delta \int_0^T \int_\R \big( \abs{\p_x U_1}+ \abs{\p_x U_3} \big) \Psi^2 dx dt  + C \int_0^T \norm{\error}_{L^\infty(\R)} \norm{\Psi}^2 dt \notag \\
	& \qquad \quad  +C \int_0^T \Big(\norm{\Rt_1}^2 + \norm{F_1}_1^2 + \norm{F_2}^2\Big) dt  + C \nu \int_0^T  \norm{J_1}_{L^{1}(\R)} dt. \label{ineq-lem2} 
\end{align}
Similar to the proof of \cref{Lem-energy-1}, we can use Lemmas \ref{Lem-F} and \ref{Lem-Rt} and \cref{ineq-J} to estimate the last three integrals on the RHS of \cref{ineq-lem2} to get that 
\begin{align*}
& \sup_{t\in[0,T]} \norm{\p_x\Phi}^2 + \int_0^T \norm{\p_x\Phi}^2 dt  \leq C \Big\{ \sup_{t\in[0,T]} \norm{\Psi}^2 + \norm{\Phi_0}_1^2 + \norm{\Psi_0}^2 +  \int_0^T \norm{\p_x (\Psi, \Xi)}^2 dt \\
& \quad + \int_0^T \int_\R \big( \abs{\p_x U_1}+ \abs{\p_x U_3} \big) \Psi^2 dx dt  +\e^2 +\delta^2 + \int_0^T \norm{\p_x^2\Psi}^2 dt
\Big\}.
\end{align*} 
This, together with \cref{Lem-energy-1}, yields \cref{ineq-energy-2}.

\end{proof}

\begin{Lem}\label{Lem-energy-3}
	Under the assumptions of \cref{Prop-a-priori},
	there exist small $\delta_0 >0,\e_0 >0 $ and $ \nu_0 >0 $ such that if $ \delta<\delta_0, \e<\e_0 $ and $ \nu<\nu_0, $ then
	\begin{equation}\label{ineq-energy-3}
		\sup_{t\in[0,T]}  \norm{\phi, \psi, \zeta }^2 +\int_{0}^T \norm{ \p_x \psi, \p_x \zeta}^2 dt \leq C \big( \norm{\Phi_0, \Psi_0, W_0}^2_1 + \e +\delta^{\frac{1}{2}}\big).
	\end{equation}
\end{Lem}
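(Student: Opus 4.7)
The target is to control $\sup_t\|(\phi,\psi,\zeta)\|^2$ together with the parabolic dissipation $\int_0^T \|\partial_x\psi,\partial_x\zeta\|^2\,dt$. Observe that $\sup_t\|\phi\|^2 = \sup_t\|\partial_x\Phi\|^2$ and $\int_0^T\|\phi\|^2\,dt$ are already in hand from \cref{Lem-energy-2}, so the genuine new work concerns $(\psi,\zeta)$. The plan is to write the perturbation system directly for $(\phi,\psi,\zeta)$ by subtracting \cref{eq-tilde} from \cref{NS}, recovering the parabolic structure
\[
\partial_t\psi + \partial_x(p-\pt) - \mu\,\partial_x\!\Big(\tfrac{\partial_x u}{v}-\tfrac{\partial_x \ut}{\vt}\Big) = -\partial_x F_2-\partial_x\Rt_1,
\]
together with an analogous equation for $\zeta$ obtained from the energy equation via $\theta=\frac{\gamma-1}{R}(E-\tfrac12 u^2)$.

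Next, I would perform a weighted $L^2$ energy estimate in the spirit of \cite{Matsumura1985,Huang2009}: multiply the $\psi$-equation by $\psi$ and the $\zeta$-equation by $\zeta/\thetat$, and integrate over $\R\times[0,T]$. After integration by parts this yields (i) the parabolic dissipation $c\int_0^T\|\partial_x\psi,\partial_x\zeta\|^2\,dt$; (ii) the time-boundary terms $\|\psi,\zeta\|^2(T)$; and, crucially, (iii) a positive shock-dissipation term $c\int_0^T\!\!\int(|\partial_x U_1|+|\partial_x U_3|)(\psi^2+\zeta^2)\,dx\,dt$ arising when the time derivative of the weight $1/\thetat$ hits the viscous-shock profiles. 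The pressure coupling $\partial_x(p-\pt)\cdot\psi$, after transferring one derivative by integration by parts, becomes $\phi\,\partial_x\psi + \zeta\,\partial_x\psi$ plus lower-order corrections, absorbable by Cauchy–Schwarz into the dissipation at the cost of $\int_0^T\|\phi,\zeta\|^2\,dt$; this quantity is precisely the one bounded in \cref{Lem-energy-2}, using \cref{anti-deriv-2} to identify $\zeta$ with $\partial_x\Xi$ modulo $O(\nu)$ quadratic corrections. The inhomogeneities $F_2,F_3,\Rt_1,\Rt_2$ contribute $O(\varepsilon+\delta^{3/2})$ via \cref{Lem-F,Lem-Rt}, and the higher-order nonlinearities $J_1,J_2$ are absorbed by $\nu$-smallness and the Sobolev embedding, exactly as in the proofs of \cref{Lem-energy-1,Lem-energy-2}.

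The main obstacle will be the careful bookkeeping of the $1$-shock/$3$-shock interaction contributions, which have the structure of \cref{cross} and integrate to $O(\delta^{1/2})$ in time, together with the error remainders $\error$ arising from the ``$\approx$'' relation used throughout \cref{Sec-ansatz}; these are handled as in the error-term computation \cref{ineq-approx} from the proof of \cref{Lem-energy-1}, using the decay bound $\|\error\|_{L^\infty(\R)}\lesssim \varepsilon e^{-\alpha t}+\delta^{3/2}e^{-c_1\delta t}$ combined with the uniform $L^2$-smallness afforded by $\nu$. Adding the resulting inequality to \cref{ineq-energy-2} and choosing $\delta_0,\varepsilon_0,\nu_0$ sufficiently small then closes \cref{ineq-energy-3}.
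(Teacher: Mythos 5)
Your plan follows the paper's proof in all essentials: subtract \cref{eq-tilde} from \cref{NS} to obtain direct equations for $\psi$ and $\zeta$, multiply and integrate, and absorb the pressure-coupling term via \cref{Lem-energy-2} together with the identity \cref{anti-deriv-2}. One claim in the middle, however, is incorrect and you should drop it: you assert that testing the temperature equation against $\zeta/\thetat$ produces a \emph{positive} shock-dissipation term $c\int_0^T\!\!\int(|\p_x U_1|+|\p_x U_3|)(\psi^2+\zeta^2)\,dx\,dt$ coming from $\p_t(1/\thetat)$. That is not so. Up to exponentially small errors $\thetat \approx \Theta_1+\Theta_3-\thetab_m$, so $\p_t\thetat \approx -s_1\,\p_x\Theta_1 - s_3\,\p_x\Theta_3$; both $\p_x\Theta_1$ and $\p_x\Theta_3$ have the same sign, while $s_1<0<s_3$, hence the two shock contributions to $\p_t\thetat$ enter with \emph{opposite} signs and the resulting quadratic form is indefinite. (The signed shock dissipation available in \cref{Lem-energy-1,Lem-energy-2} attaches to the \emph{anti-derivative} variables $\Psi,\Xi$ through the coefficient $\tilde b$, not to $\psi,\zeta$; and indeed \cref{ineq-energy-3} contains no such term on its left-hand side.) The paper avoids the issue altogether by multiplying by $\zeta$ rather than $\zeta/\thetat$. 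Fortunately your argument does not actually rely on this phantom dissipation: the $\int_0^T\|\phi,\zeta\|^2\,dt$ that appears after Cauchy--Schwarz is absorbed, as you say, by \cref{Lem-energy-2} together with \cref{anti-deriv-2} (keeping track of the $\p_x\ut\,\Psi$ correction, which needs the $\int\int(|\p_x U_1|+|\p_x U_3|)\Psi^2$ control from that lemma), so deleting the weight and the associated claim leaves a correct proof. One small housekeeping item you omit is the initial-data bound $\|\zeta(\cdot,0)\|^2\le C\|W_0,\Psi_0\|_1^2$, which is needed because $\zeta_0$ is not one of the prescribed data.
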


\begin{proof}
	Subtracting \eqref{eq-tilde}$ _2 $ from \eqref{NS}$ _2 $ gives that
	\begin{equation}\label{eq-energy-3}
	\p_t \psi -\mu \p_x \Big(\frac{\p_x\psi}{v}\Big) = -\p_x \Big(p-\pt  + \mu \frac{\p_x\ut}{v\vt} \phi + F_2+\Rt_1 \Big).	
	\end{equation}
	Multiplying $ \psi $ on \cref{eq-energy-3} and integrating the resulting equation on $ \R\times[0,T] $ with the fact that $ \norm{p-\pt} \leq C\norm{\zeta,\phi}, $ one has that
	\begin{equation}\label{ineq-lem3-1}
		\sup_{t\in[0,T]}\norm{\psi}^2 + \int_0^T \norm{\p_x \psi}^2 dt \leq C \norm{\Psi_0}_1^2 + C \int_0^T \norm{\zeta,\phi,F_2,\Rt_1}^2 dt.
	\end{equation}
	It follows from \cref{anti-deriv-2} that
	\begin{align*}
		\int_0^T \norm{\zeta}^2 dt
		& \leq C \int_0^T \norm{\p_x\Xi, \p_x \Psi}^2 dt + C \int_0^T \norm{\big(\abs{\p_x U_1} + \abs{\p_x U_3}\big)^{\frac{1}{2}} \Psi}^2 dt \\
		& \quad + C\big(\e + \delta^{ \frac{1}{2}}\big) \sup_{t\in[0,T]} \norm{\Psi}^2.
	\end{align*}
	Combining \cref{ineq-lem3-1} and \cref{Lem-energy-2}, if $ \e, \nu $ and $ \delta $ are small enough, one has that
	\begin{align}
		& \sup_{t\in[0,T]}\norm{\psi}^2 +\int_{0}^T \norm{ \p_x \psi}^2 dt \notag \\
		&\qquad \leq C \norm{\Phi_0,\Psi_0}_1^2 + \norm{W_0 }^2 + C\big( \nu +\delta^{\frac{1}{2}}\big) \int_{0}^T \norm{\p_x \zeta}^2 dt +C \big(\e +\delta^{\frac{1}{2}}\big). \label{ineq-energy-psi}
	\end{align}
	Similarly, subtracting \eqref{eq-tilde}$ _3 $ from \eqref{NS}$ _3 $ gives that
	\begin{align}
	& \frac{R}{\gamma-1} \p_t \zeta - \kappa \p_x \Big( \frac{\p_x\zeta}{v} \Big) = - (p-\pt) \p_x \ut - p \p_x\psi - \p_x \Big( \kappa \frac{\p_x\thetat}{v\vt} \phi + F_3 + \Rt_2 \Big) \notag \\
	& \qquad - \frac{\mu (\p_x\ut)^2}{v\vt} \phi + \frac{\mu}{v} (2\p_x\ut + \p_x\psi) \p_x\psi + \p_x \big[ \ut (F_2+\Rt_1) \big] -\p_x\ut (F_2+\Rt_1). 
	\label{eq-energy-4}
	\end{align}
	Then multiplying $ \zeta $ on \cref{eq-energy-4} and integrating the resulting equation on $ \R\times[0,T], $ one can verify that
	\begin{align*}
	\sup_{t\in[0,T]} \norm{\zeta}^2 + \int_0^T \norm{\p_x\zeta}^2 dt \leq C\norm{\zeta(x,0)}^2 + C \int_0^T \norm{\phi,\zeta,\p_x\psi,F_3, \Rt_2 ,F_2,\Rt_1}^2  dt.
	\end{align*}
	Then combining \cref{ineq-energy-psi}, Lemmas \ref{Lem-F}, \ref{Lem-Rt} and \ref{Lem-energy-2} and the fact that $ \norm{\zeta(x,0)}^2 \leq C \norm{W_0,\Psi_0}_1^2, $ one can obtain \cref{ineq-energy-3}. 
	
\end{proof}

\begin{Lem}
	Under the assumptions of \cref{Prop-a-priori},
	there exist small $\delta_0 >0,\e_0 >0 $ and $ \nu_0 >0 $ such that if $ \delta<\delta_0, \e<\e_0 $ and $ \nu<\nu_0, $ then
	$$\sup_{t\in[0,T]}  \norm{ \p_x (\phi, \psi, \zeta) }^2 +\int_{0}^T \big(\norm{\p_x\phi}^2 + \norm{ \p_x^2 (\psi, \zeta)}^2 \big) dt \leq C(\norm{\Phi_0, \Psi_0, W_0}^2_2 + \e +\delta^{\frac{1}{2}}). $$ 
\end{Lem}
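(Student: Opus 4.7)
The goal is to lift \cref{Lem-energy-2,Lem-energy-3} by one $x$-derivative, producing $\sup_t \norm{\p_x(\phi,\psi,\zeta)}^2$ together with the integrated bound on $\norm{\p_x\phi,\p_x^2\psi,\p_x^2\zeta}^2$. The natural strategy is to apply $\p_x$ to the evolution equations \cref{eq-energy-3,eq-energy-4} and mimic the energy arguments of the two preceding lemmas one level higher, then close by the smallness of $\nu,\delta,\e$.

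\textbf{Step 1: parabolic estimates on $\p_x\psi,\p_x\zeta$.} Differentiate \cref{eq-energy-3} in $x$, multiply by $\p_x\psi$, and integrate on $\R\times[0,T]$. The viscous term $\mu\p_x(\p_x\psi/v)$ produces $\int_0^T \norm{\p_x^2\psi}^2 dt$ on the good side after integration by parts. The forcing terms $\p_x F_2$ and $\p_x\Rt_1$ are controlled by \cref{Lem-F,Lem-Rt}; quadratic nonlinearities coming from $\p_x J_1$ are absorbed using the smallness of $\nu$ together with Sobolev embedding; cross terms from $\p_x^2(p-\pt)$ and from $\p_x^2\bigl(\mu\p_x\ut\,\phi/(v\vt)\bigr)$ generate $\p_x\phi$- and $\p_x\zeta$-contributions that are split by Cauchy--Schwarz into $\theta\norm{\p_x^2\psi}^2 + C_\theta \norm{\p_x\phi,\p_x\zeta}^2$ (with $\theta$ a small auxiliary constant, to be absorbed later). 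The same scheme applied to \cref{eq-energy-4} yields the analogous bound on $\p_x\zeta,\p_x^2\zeta$.

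\textbf{Step 2: hyperbolic estimate on $\p_x\phi$.} This is the main obstacle, since the continuity relation $\p_t\phi=\p_x\psi-\p_x F_1$ provides no viscous regularization of $\phi$. Mimicking \cref{Lem-energy-2} one derivative higher, apply $\p_x$ to \cref{eq-energy-3} and substitute $\p_x^2\psi=\p_t\p_x\phi+\p_x^2 F_1$ inside the viscous term, rewriting the differentiated momentum equation schematically as
\begin{equation*}
\p_t\Bigl(\p_x\psi-\frac{\mu}{v}\p_x\phi\Bigr)+\p_x(p-\pt)+\cdots=\text{lower-order terms}.
\end{equation*}
Multiplying by $\p_x\phi$ and integrating on $\R\times[0,T]$, an IBP in time on the $\p_t\p_x\psi\cdot\p_x\phi$ piece produces, using continuity, a good contribution controlled by $\int_0^T \norm{\p_x\psi}^2 dt$ (already available from \cref{Lem-energy-3}) plus the boundary pairing $\sup_t \norm{\p_x\phi}^2$ coming from the $\mu\p_x\phi/v$ term. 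The pressure term supplies the coercive $\int \frac{R\thetat}{\vt^2}(\p_x\phi)^2 dx$ on the left, because $p_v<0$. The coupling to $\p_x\zeta$ from $\p_x(p-\pt)$ is dominated by Cauchy--Schwarz against the $\int_0^T\norm{\p_x^2\zeta}^2 dt$ supplied by Step 1, and the ansatz error terms are estimated exactly as in \cref{Lem-energy-1} by means of \cref{Lem-F,Lem-Rt}, \cref{rel-1,rel-2,cross}, and the auxiliary bound \cref{ineq-approx}.

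\textbf{Step 3: closing.} Adding Steps 1 and 2 and invoking \cref{Lem-energy-1,Lem-energy-2,Lem-energy-3} for the lower-order right-hand side, all mixed terms are absorbed into the good left-hand side provided $\nu,\delta,\e$ are sufficiently small, yielding the claimed bound with constant depending on $\norm{\Phi_0,\Psi_0,W_0}_2^2+\e+\delta^{1/2}$. The delicate point throughout is Step 2: coordinating the time IBP with continuity to extract the coercive $|p_v|(\p_x\phi)^2$ while keeping precise control of the composite-wave interaction (handled by \cref{cross}) and the diffusion-wave contributions from $\Theta$, in the same spirit as the error analysis that culminates in \cref{ineq-approx} of \cref{Lem-energy-1}.
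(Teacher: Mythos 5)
Your overall strategy is correct and matches the paper's intent exactly: the paper's own proof of this lemma is simply the sentence "the proof is similar to that of Lemmas \ref{Lem-energy-2} and \ref{Lem-energy-3}, so it is omitted here," and you have supplied a faithful account of what that means, namely a higher-order parabolic estimate on $\p_x\psi,\p_x\zeta$ (the analogue of \cref{Lem-energy-3}) combined with a hyperbolic estimate on $\p_x\phi$ obtained by re-using the continuity relation inside the momentum equation (the analogue of \cref{Lem-energy-2}) and testing against $\p_x\phi$. Step~1 and Step~3 are standard and unproblematic.

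The one place that needs a small correction is the mechanism you describe in Step~2. You say ``apply $\p_x$ to \cref{eq-energy-3} and substitute $\p_x^2\psi=\p_t\p_x\phi+\p_x^2 F_1$ inside the viscous term,'' and write the schematic $\p_t\bigl(\p_x\psi-\frac{\mu}{v}\p_x\phi\bigr)+\p_x(p-\pt)+\cdots$. If you literally differentiate \cref{eq-energy-3} first, the viscous term becomes $\mu\p_x^2\bigl(\p_x\psi/v\bigr)$, whose leading part is $\mu v^{-1}\p_x^3\psi$; the substitution then produces $\p_t\p_x^2\phi$, and testing against $\p_x\phi$ would require control of $\p_x^2\phi$, which the lemma does not provide (and integration by parts instead puts $\p_x^2\phi$ on the other factor, with the same problem). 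The correct analogue of \cref{Lem-energy-2} is to work with \cref{eq-energy-3} itself, without the extra $\p_x$: write $\mu\p_x\bigl(\p_x\psi/v\bigr)=\mu v^{-1}\p_x^2\psi-\mu v^{-2}\p_xv\,\p_x\psi$, substitute $\p_x^2\psi=\p_t\p_x\phi+\p_x^2 F_1$ only in the first term, and obtain the schematic $\p_t\bigl(\psi-\frac{\mu}{v}\p_x\phi\bigr)+\p_x(p-\pt)+\cdots=\text{l.o.t.}$ Testing against $-\p_x\phi$ then yields $\p_t\bigl(\frac{\mu}{2v}|\p_x\phi|^2-\psi\p_x\phi\bigr)$ up to commutators, the coercive $\frac{\pt}{\vt}|\p_x\phi|^2$ from the pressure, and the term $-\int(\p_x\psi)^2\,dx$ from the time IBP on $\psi\p_x\phi$ (via $\p_t\p_x\phi=\p_x^2\psi-\p_x^2F_1$), all of which is exactly what your Step~2 then claims to absorb. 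So the substance of the step is right; only the ``apply $\p_x$'' prescription and the $\p_x\psi$ inside the $\p_t$ of your schematic should be dropped.
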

\begin{proof}
	The proof is similar to that of Lemmas \ref{Lem-energy-2} and \ref{Lem-energy-3}, so it is omitted here.
\end{proof}


Once \cref{Prop-a-priori} is proved, it remains to supplement the proof of Lemmas \ref{Lem-shift} and \ref{Lem-F}.

\vspace{.3cm}

\section{Proof of Lemmas \ref{Lem-shift} and \ref{Lem-F}}\label{Sec-shift-F}

\begin{proof}[Proof of \cref{Lem-shift}]
	When $ \e = \sum_{i=l,r} \norm{\phi_{0i}, \psi_{0i}, w_{0i} }_{H^3((0,\pi_i))} $ is small, with \cref{Lem-periodic}, the existence, uniqueness and regularities of $\X,\Y,\Z$ can be easily derived from the ODEs, \eqref{ode-shift}. Besides, the exponential decay rates of $ \X',\Y' $ and $ \Z' $ can follow from \cref{RH,decay-per} directly.
	
	Now we calculate the limits of these curves. For brevity, we give only the proof of $ \X_\infty, $ since the other two , \eqref{Y-inf} and \eqref{Z-inf}, are similar to prove.
	
	For any fixed 
	$y\in[0,1], t>0$ and integer $N>0$, define the domain
	\begin{align*}
	&\Omega_{y}^N(t) := \{ (x,\tau); 0<\tau<t, \Gamma_l^N (\tau) <x< \Gamma_r^N (\tau)  \}, \\
	& \quad \text{ where } \ \Gamma_l^N (\tau) := s_1\tau+\X (\tau) +(-N+y)\pi_l, \ \Gamma_r^N (\tau):= s_3 \tau +\X (\tau)+\sigma +(N+y)\pi_r.
	\end{align*}
 Then integration by parts yields that 
	$$\lim_{N\rightarrow +\infty} \int_{0}^{1} \iint_{ \Omega_{y}^N(t) } (\p_t v^\sharp -\p_x u^\sharp) dx d\tau dy=0, $$
	which yields that
	\begin{align}
	\lim_{N\rightarrow +\infty} \int_{0}^{1}  \Big\{  &
	\int_{\Gamma_l^N(0) }^{\Gamma_r^N(0)} v^\sharp(x,0) dx 
	+ \int_0^t [(s_3+\X') v^\sharp + u^\sharp ] ( \Gamma_r^N (\tau), \tau ) d\tau \notag \\
	& - \int_{\Gamma_l^N(t) }^{\Gamma_r^N(t)} v^\sharp(x,t) dx 
	- \int_0^t [(s_1+\X') v^\sharp + u^\sharp ] ( \Gamma_l^N (\tau), \tau ) d\tau
	\Big\} dy=0. \label{integral}
	\end{align}
	For the integrals on $\tau=0$ and $\tau=t$, it holds that
	\begin{align*}
	&\int_{\Gamma_l^N(0) }^{\Gamma_r^N(0)} v^\sharp(x,0) dx- \int_{\Gamma_l^N(t) }^{\Gamma_r^N(t)} v^\sharp(x,t) dx = \sum_{i=1}^4 I_i,
	\end{align*}
	where
	\begin{align*}
	I_1 & = \int_{\Gamma_l^N(0) }^{\Gamma_r^N(0)} [ \phi_{0l} (1- \tau^1_{\X_0} (g_1))  + \phi_{0r} \tau^3_{\X_0+\sigma}(g_3) ](x,0) dx , \\
	I_2 & = \int_{\Gamma_l^N(0) }^{\Gamma_r^N(0)} [\tau^1_{\X_0}(v^S_1) + \tau^3_{\X_0+\sigma}(v^S_3)-\vb_m](x,0) dx, \\
	I_3 & = - \int_{\Gamma_l^N(t) }^{\Gamma_r^N(t)} [\phi_l (1- \tau^1_{\X} (g_1))  + \phi_r \tau^3_{\X+\sigma}(g_3)](x,t) dx, \\
	I_4 & = - \int_{\Gamma_l^N(t) }^{\Gamma_r^N(t)} [  \tau^1_{\X}(v^S_1) + \tau^3_{\X+\sigma}(v^S_3)-\vb_m ](x,t) dx.
	\end{align*}
	One can verify that 
	\begin{align*}
	I_1 & = \int_0^{\Gamma_r^N(0)} [ \phi_{0l} (1- \tau^1_{\X_0} (g_1))  - \phi_{0r} (1-\tau^3_{\X_0+\sigma}(g_3)) ](x,0) ] dx + \int_0^{\Gamma_r^N(0)} \phi_{0r}(x) dx \\
	& \quad + \int_{\Gamma_l^N(0)}^0 [-\phi_{0l}\tau^1_{\X_0} (g_1) + \phi_{0r}\tau^3_{\X_0+\sigma}(g_3) ] dx + \int_{\Gamma_l^N(0)}^0 \phi_{0l}(x) dx,
	\end{align*}
	which yields that
	\begin{align}
	\lim_{N\to +\infty} \int_0^1 I_1 dy & = \int_0^{+\infty} [ \phi_{0l} (1- \tau^1_{\X_0} (g_1))  - \phi_{0r} (1-\tau^3_{\X_0+\sigma}(g_3)) ](x,0) ] dx \notag \\
	& \quad  + \int_{-\infty}^0 [-\phi_{0l}\tau^1_{\X_0} (g_1) + \phi_{0r}\tau^3_{\X_0+\sigma}(g_3) ](x,0) dx \label{I-1} \\
	& \quad + \int_0^1 \int_0^{\X_0+y\pi_r} \phi_{0r}(x)dxdy + \int_0^1 \int_{\X_0+y\pi_l}^0 \phi_{0l}(x) dxdy. \notag
	\end{align}
	Since $\phi_{0l} $ and $ \phi_{0r}$ have zero average, then 
	\begin{align*}
	\int_{0}^{1}\int_{0}^{\X_0 + y \pi_r} \phi_{0r}(x) dx dy & = \frac{1}{\pi_r} \int_{0}^{\pi_r} \int_{0}^{y} \phi_{0r} (x) dx dy, \\
	\int_{0}^{1} \int_{\X_0+y\pi_l}^0 \phi_{0l}(x) dxdy & = -\frac{1}{\pi_l} \int_{0}^{\pi_l} \int_{0}^{y} \phi_{0l} (x) dxdy.
	\end{align*}
	Similar calculations of $ I_1 $ with \cref{Lem-periodic} yield that
	\begin{align}
		\abs{\lim_{N\to +\infty} \int_0^1 I_3 dy} \leq C e^{-\alpha t}. \label{I-3}
	\end{align}
	In addition, 
	\begin{align*}
	I_2+I_4 & = - \vb_m [\Gamma_r^N(0)-\Gamma_l^N(0)]+ \vb_m [\Gamma_r^N(t)-\Gamma_l^N(t)]\\
	&\quad - \int_{(N+y)\pi_r+\sigma}^{(s_3-s_1)t +(N+y) \pi_r+\sigma } v^S_1 (x) dx 
	+\int_{(-N+y)\pi_l-\sigma}^{-(s_3-s_1)t +(-N+y) \pi_l-\sigma } v^S_3 (x) dx.
	\end{align*}
	Since $ v^S_1(x) $ (resp. $ v_3^S(x) $)  $ \to \vb_m$ as $x \to +\infty $ (resp. $ -\infty $), then one has that
	\begin{align}
	\lim_{N\to +\infty} \int_0^1 (I_2+I_4)dy  = - \vb_m (s_3 -s_1) t. \label{I-2-4}
	\end{align}	
	Since $ (v^\sharp,u^\sharp) \to (v_r,u_r) $ as $ x \to +\infty, $ the integral on $\Gamma_r^N$ in \cref{integral} satisfies that
	\begin{align}
	& \lim_{N\rightarrow +\infty} \int_{0}^{1}  \Big\{   \int_0^t [(s_3+\X') v^\sharp + u^\sharp ] ( \Gamma_r^N (\tau), \tau ) d\tau \Big\} dy \notag \\
	& \quad =  \int_{0}^{t} \int_{0}^{1}  [(s_3+\X') v_r + u_r ] ( \Gamma_r^N(\tau), \tau ) dy d\tau \notag \\
	& \quad = (s_3 t +\X- \X_0)\vb_r  +\ub_r t. \label{int-Ga-r}
	\end{align}
	By similar calculations, the left integral on $\Gamma_l^N$ satisfies that
	\begin{equation}\label{int-Ga-l}
	\lim_{N\rightarrow +\infty} \int_{0}^{1}  \Big\{   \int_0^t [(s_1+\X') v^\sharp + u^\sharp ] ( \Gamma_l^N (\tau), \tau ) d\tau \Big\} dy = (s_1 t +\X- \X_0)\vb_l  +\ub_l t.
	\end{equation}
	Collecting \cref{I-1,I-2-4,int-Ga-r,int-Ga-l,integral} gives that
	\begin{align*}
	& \int_{0}^{+\infty} \left[ \phi_{0l}(x) \left(1-g_1(x-\X_0)\right) - \phi_{0r}(x) \left(1-g_3(x-\X_0-\sigma)\right) \right] dx \\
	& \quad -\int_{-\infty}^{0} \left[ \phi_{0l}(x) g_1(x-\X_0) - \phi_{0r}(x) g_3(x-\X_0-\sigma) \right] dx \\
	& \quad - \frac{1}{\pi_l} \int_{0}^{\pi_l} \int_{0}^{y} \phi_{0l}(x) dxdy + \frac{1}{\pi_r} \int_{0}^{\pi_r} \int_{0}^{y} \phi_{0r}(x) dxdy - \vb_m(s_3-s_1)t \\
	& \quad + \vb_r(s_3 t +\X- \X_0)  +\ub_r t - \vb_l(s_1 t +\X- \X_0) -\ub_l t=O(\e e^{-2\alpha t}),
	\end{align*}
	where $ O(e^{-2\alpha t}) $ denotes the terms satisfying $ \abs{O(e^{-\alpha t})} \leq C\e e^{-2\alpha t}. $
	This, together with the Rankine-Hugoniot conditions \cref{RH}$ _1 $ for $ i=1,3, $ yields \cref{X-inf} directly.
	
\end{proof}

\begin{proof}[Proof of \cref{Lem-F}]
Here we give only the proof of $ F_3, $ since the proofs of the other two are similar.


First, it follows from the equation of $ \Z', $ \cref{ode-shift}, that
\begin{align}
F_3(x,t) & = F_{3,1}(x,t) + \int_{-\infty}^x f_{3,2}(y,t)dy + \Z'(t)\int_{-\infty}^x f_{3,3}(y,t)dy, \label{int-min}\\
& = F_{3,1}(x,t) - \int_x^{+\infty} f_{3,2}(y,t)dy - \Z'(t)\int_x^{+\infty} f_{3,3}(y,t)dy. \label{int-plus}
\end{align}

Case 1. For $ x< s_3 t, $ we decompose $ F_3(x,t) $ according to \cref{int-min} as follows.
\begin{align}
F_3 =~& p(v^\sharp,\theta^\sharp)u^\sharp - p(v_l,\theta_l) u_l + p(\vb_l,\thetab_l)\ub_l \tau^1_{\Z}(h_1) - p(\vb_r,\thetab_r)\ub_r \tau^3_{\Z+\sigma}(h_3) \notag \\
& - \kappa\Big(\frac{\p_x \theta^\sharp}{v^\sharp} - \frac{\p_x \theta_l}{v_l}\Big) - \mu \Big( \frac{u^\sharp\p_x u^\sharp}{v^\sharp} - \frac{u_l\p_x u_l}{v_l} \Big) \label{F-3} \\
& - \left( s_1 \jump{E}_1 + p(\vb_l,\thetab_l) \ub_l \right) \tau^1_{\Z}(h_1) - \left( s_3 \jump{E}_3 - p(\vb_r,\thetab_r)\ub_r \right) \tau^3_{\Z+\sigma}(h_3)  + D, \notag
\end{align}
where the remainder $ D $ is the sum of products of some well-decaying terms, given by
\begin{align*}
D = D_1 \tau^1_{\Z}(h_1) - D_2 \tau^3_{\Z+\sigma}(h_3)  + \int_{-\infty}^{x} D_3 \tau^1_{\Z}(h_1') dy -\int_{-\infty}^{x} D_4 \tau^3_{\Z+\sigma}(h_3') dy.
\end{align*}
where 
\begin{align*}
D_1 & = p(v_l, \theta_l) u_l- p(\vb_l,\thetab_l)\ub_l -\kappa \frac{\p_x \theta_l}{v_l} -\mu \frac{u_l\p_x u_l}{v_l}, \\
D_2 & = p(v_r, \theta_r) u_r - p(\vb_r,\thetab_r)\ub_r -\kappa \frac{\p_x \theta_r}{v_r} -\mu \frac{u_r\p_x u_r}{v_r}, \\
D_3 & = s_1(E_l-\Eb_l )-p(v_l, \theta_l) u_l+ p(\vb_l,\thetab_l)\ub_l +\kappa \frac{\p_x \theta_l}{v_l} +\mu \frac{u_l\p_x u_l}{v_l} +\Z' (E_l-\Eb_m), \\
D_4 & = s_3(E_r-\Eb_r)-p(v_r, \theta_r) u_r+ p(\vb_r,\thetab_r)\ub_r +\kappa \frac{\p_x \theta_r}{v_r} +\mu \frac{u_r\p_x u_r}{v_r} +\Z' (E_r-\Eb_m),
\end{align*}
each of which is periodic, and satisfies 
$ \norm{D_i}_{W^{1,\infty}(\R)} \leq C\e e^{-2\alpha t}. $
Then it holds that
\begin{align*}
\sum_{k=0}^1 \int_{-\infty}^{s_3 t} \abs{\p_x^k D(x,t)}^2 dx & \leq C \e^2 e^{-4 \alpha t} \sum_{k=0}^1 \int_{-\infty}^{s_3 t} \left[ \abs{\p_x^k \tau^1_{\Z}(h_1)}^2+\abs{\p_x^k \tau^3_{\Z+\sigma}(h_3)}^2 \right] dy \\
& \leq C\e^2 e^{-4\alpha t} \big[ C+(s_3-s_1)t \big], \\
& \leq C\e^2 e^{-2\alpha t}.
\end{align*}
With \cref{ode}$ _3, $ one can further decompose \cref{F-3} as follows,
\begin{align}
F_3- D & =  \Big( p(v^\sharp,\theta^\sharp)u^\sharp  - p(v_l,\theta_l)u_l \Big) - \Big( p\left(v^S_{(\xi,\xi+\sigma)}, \theta^S_{(\xi,\xi+\sigma)}\right) u^S_{(\xi,\xi+\sigma)} - p(\vb_l,\thetab_l)\ub_l \Big)  \notag \\
& \quad - \kappa \Big( \frac{\p_x \theta^\sharp}{v^\sharp} - \frac{\p_x \theta_l}{v_l}  - \frac{\p_x \theta^S_{(\xi,\xi+\sigma)}}{v^S_{(\xi,\xi+\sigma)}} \Big) \notag \\
& \quad - \mu \Big( \frac{u^\sharp\p_x u^\sharp}{v^\sharp} - \frac{u_l\p_x u_l}{v_l} - \frac{u^S_{(\xi,\xi+\sigma)}\p_x u^S_{(\xi,\xi+\sigma)} }{v^S_{(\xi,\xi+\sigma)}}  \Big) \notag \\
& \quad +  p\left(v^S_{(\xi,\xi+\sigma)}, \theta^S_{(\xi,\xi+\sigma)}\right) u^S_{(\xi,\xi+\sigma)} - \tau^1_{\xi} \left(p(v_1^S, \theta_1^S) u_1^S\right) - \tau^3_{\xi+\sigma} \left(p(v_3^S, \theta_3^S) u_3^S\right)  \notag \\
& \quad - \kappa \Big[ \frac{\p_x \theta^S_{(\xi,\xi+\sigma)}}{v^S_{(\xi,\xi+\sigma)}} - \tau^1_{\xi} \Big( \frac{\p_x \theta_1^S}{v_1^S} \Big) - \tau^3_{\xi+\sigma}\Big(\frac{\p_x\theta_3^S}{v_3^S}\Big) \Big] \notag \\
& \quad - \mu \Big[\frac{u^S_{(\xi,\xi+\sigma)}\p_x u^S_{(\xi,\xi+\sigma)} }{v^S_{(\xi,\xi+\sigma)}} - \tau^1_{\xi} \Big(\frac{u_1^S\p_xu_1^S}{v_1^S} \Big) - \tau^3_{\xi+\sigma}  \Big(\frac{u_3^S\p_xu_3^S}{v_3^S} \Big) \Big] \notag \\
& \quad + s_1 \left(\tau^1_{\xi}-\tau^1_{\Z}\right)\left(E_1^S\right) + s_3 \left(\tau^3_{\xi+\sigma}-\tau^3_{\Z+\sigma}\right)\left(E_3^S\right) \notag \\
& := \sum\limits_{i=1}^7 K_i, \label{K-i}
\end{align}
where $ K_i $ represents each line on the right-hand side.
Then we estimate $ K_1 $ to $ K_7 $ one by one. 

One can verify that
\begin{align*}
K_1 & = K_{1,1} \left(v^\sharp-v_l\right) + K_{1,2} (\theta^\sharp-\theta_l) + p(v_l,\theta_l) \left(u^\sharp-u_l\right) \\
& \quad- K_{1,3} \left(v^S_{(\xi,\xi+\sigma)}-\vb_l \right) - K_{1,4} \left(\theta^S_{(\xi,\xi+\sigma)}-\thetab_l \right)  - p(\vb_l,\thetab_l) \left( u^S_{(\xi,\xi+\sigma)}-\ub_l\right).
\end{align*}
where 
\begin{align*}
K_{1,1} & = u^\sharp \int_0^1 (\p_v p)\left(v_l+\rho(v^\sharp-v_l), \theta^\sharp \right) d\rho, \\
K_{1,2} & = u^\sharp \int_0^1 (\p_\theta p)\left(v_l, \theta_l+ \rho(\theta^\sharp-\theta_l) \right) d\rho, \\
K_{1,3}& = u^S_{(\xi,\xi+\sigma)} \int_0^1 (\p_v p)\left(\vb_l+\rho(v^S_{(\xi,\xi+\sigma)}-\vb_l), \theta^S_{(\xi,\xi+\sigma)} \right) d\rho, \\
K_{1,4} & = u^S_{(\xi,\xi+\sigma)} \int_0^1 (\p_\theta p)\left(\vb_l, \thetab_l +\rho(\theta^S_{(\xi,\xi+\sigma)}-\thetab_l) \right) d\rho,
\end{align*}
By \cref{equiv}, one can verify that
\begin{align}
v^\sharp - v_l & = (\vb_m-\vb_l) \tau^1_{\xi}(g_1) + (\vb_r-\vb_m)\tau^3_{\xi+\sigma}(g_3) + J_1  = v^S_{(\xi,\xi+\sigma)} - \vb_l + J_1, \label{J-1} \\
\theta^\sharp - \theta_l 
& = \frac{\gamma-1}{R} \big( \Eb_l \tau^1_\xi(h_1)+ \Eb_r \tau^3_{\xi+\sigma}(h_3) - \Eb_m \big) \notag \\
& \quad - \frac{\gamma-1}{2R} \big(u^S_{\xi,\xi+\sigma}+ \ub_l\big) \big(- \ub_l \tau_\xi^1(g_1)+ \ub_r \tau^3_{\xi+\sigma}(g_3) \big) + J_2  \label{J-2}\\
& = \theta^S_{(\xi,\xi+\sigma)} - \thetab_l + J_2, \notag \\
u^\sharp-u_l & = - \ub_l \tau^1_{\xi}(g_1) + \ub_r \tau^3_{\xi+\sigma}(g_3) + J_3  = u^S_{(\xi,\xi+\sigma)} - \ub_l + J_3, \label{J-3}
\end{align}
where the remainders $ J_i $ for $ i=1,2,3, $ are some terms which satisfy that 
\begin{align*}
	\sum_{k=0}^2 \int_{-\infty}^{s_3t} \abs{\p_x^k J_i}^2 dx \leq C\e^2 e^{- 2\alpha t}.
\end{align*}
Thus,
\begin{align*}
K_1 & = (K_{1,1}-K_{1,3})\left(v^S_{(\xi,\xi+\sigma)}-\vb_l\right) + K_{1,1} J_1 + (K_{1,2}-K_{1,4})\left(\theta^S_{(\xi,\xi+\sigma)}-\thetab_l\right) \\
& \quad + K_{1,2} J_2 + p(v_l,\theta_l) J_3 + \left[p(v_l,\theta_l) - p(\vb_l,\thetab_l) \right] \left(u^S_{(\xi,\xi+\sigma)}-\ub_l\right).
\end{align*}
Note that $ K_{1,1}\sim K_{1,3},K_{1,2} \sim K_{1,4} $ and $ p(v_l,\theta_l) \sim p(\vb_l,\thetab_l), $ then one has that
\begin{equation}\label{K-1}
\sum_{k=0}^1 \int_{-\infty}^{s_3 t} \abs{\p_x^k K_1}^2 dx \leq C\e^2 e^{- 2\alpha t}.
\end{equation} 
Taking \cref{J-2} into the formula of $ K_2 $ gives that 
\begin{align*}
- \kappa^{-1} K_2 
= \p_x \theta_l \Big(\frac{1}{v^\sharp}- \frac{1}{v_l}\Big) + \Big( \frac{1}{v^\sharp} -\frac{1}{v^S_{(\xi,\xi+\sigma)}} \Big) \p_x \left(\theta^S_{(\xi,\xi+\sigma)}-\thetab_l\right)+\frac{\p_x J_2 }{v^\sharp}.
\end{align*}
Using \cref{Lem-periodic} and the fact that $ v^\sharp \sim v^S_{(\xi,\xi+\sigma)}, $ one can get that
\begin{equation}\label{K-2}
\sum_{k=0}^1 \int_{-\infty}^{s_3 t} \abs{\p_x^k K_2}^2 dx \leq C\e^2 e^{- 2\alpha t}.
\end{equation}
Similarly, $ K_3 $ satisfies that
\begin{align*}
-2\mu^{-1} K_3 =~& \frac{1}{v^\sharp} \p_x \big[ \left(u^\sharp\right)^2 - u_l^2 - \left(u^S_{(\xi,\xi+\sigma)}\right)^2 + \ub_l^2 \big] + \p_x u_l^2 \Big(\frac{1}{v^\sharp}-\frac{1}{v_l}\Big) \\
& + \p_x \left(u^S_{(\xi,\xi+\sigma)}\right)^2 \Big(\frac{1}{v^\sharp}-\frac{1}{v^S_{(\xi,\xi+\sigma)}}\Big).
\end{align*}
It follows from \cref{J-3} that
\begin{align*}
\left(u^\sharp\right)^2 - u_l^2 - \left(u^S_{(\xi,\xi+\sigma)}\right)^2 + \ub_l^2 = J_3 (u^\sharp+u_l) + \left(u^S_{(\xi,\xi+\sigma)}-\ub_l\right) \left(u^\sharp-u^S_{(\xi,\xi+\sigma)} + u_l -\ub_l\right).
\end{align*} 
Thus, it holds that
\begin{equation}\label{K-3}
\sum_{k=0}^1 \int_{-\infty}^{s_3 t} \abs{\p_x^k K_3}^2 dx \leq C\e^2 e^{- 2\alpha t}.
\end{equation}
For $ K_4, $ it holds that 
\begin{align*}
K_4 & = \underbrace{\left[p\left(v^S_{(\xi,\xi+\sigma)}, \theta^S_{(\xi,\xi+\sigma)}\right) - \tau^1_{\xi}\left(p(v_1^S,\theta_1^S)\right) \right]}_{K_{4,1}} \tau^1_{\xi}\left(u_1^S\right) \\
& \quad + \underbrace{\left[p\left(v^S_{(\xi,\xi+\sigma)}, \theta^S_{(\xi,\xi+\sigma)}\right) - \tau^3_{\xi+\sigma}\left(p(v_3^S,\theta_3^S)\right) \right]}_{K_{4,2}} \tau^3_{\xi+\sigma}\left(u_3^S\right),
\end{align*}
where 
\begin{align*}
K_{4,1} & = \int_0^1 (\p_v p)\left((1-\rho)\tau^1_{\xi}(v_1^S)+ \rho v^S_{(\xi,\xi+\sigma)}, \theta^S_{(\xi,\xi+\sigma)} \right) d\rho \left( v^S_{(\xi,\xi+\sigma)} - \tau^1_{\xi}(v_1^S) \right) \\
& \quad + \int_0^1 (\p_\theta p)\left(\tau^1_{\xi}(v_1^S), (1-\rho)\tau^1_{\xi}(\theta_1^S)+ \rho \theta^S_{(\xi,\xi+\sigma)} \right) d\rho \left( \theta^S_{(\xi,\xi+\sigma)} - \tau^1_{\xi}(\theta_1^S) \right),
\end{align*}
where $ v^S_{(\xi,\xi+\sigma)} - \tau^1_{\xi}(v_1^S) =  \tau^3_{\xi+\sigma}\left(v_3^S\right) -\vb_m $ and $ \theta^S_{(\xi,\xi+\sigma)} - \tau^1_{\xi}(\theta_1^S) = \tau^3_{\xi+\sigma}\left(\theta_3^S\right) -\thetab_m - \frac{\gamma-1}{R} \tau^1_{\xi}\left(u_1^S\right) \tau^3_{\xi+\sigma}\left(u_3^S\right). $
Similarly, one can verify that
\begin{align*}
K_{4,2} & = \int_0^1 (\p_v p)(\cdots) d\rho \left( \tau^1_{\xi}\left(v_1^S\right) -\vb_m \right) \\
& \quad + \int_0^1 (\p_\theta p)(\cdots) d\rho \ \big[ \tau^1_{\xi} \left( \theta_1^S\right) -\thetab_m - \frac{\gamma-1}{R} \tau^1_{\xi}\left(u_1^S\right) \tau^3_{\xi+\sigma}\left(u_3^S\right) \big]. 
\end{align*}
Then it follows from \cref{cross} that
\begin{equation}\label{K-4}
\sum_{k=0}^1 \int_\R \abs{\p_x^k K_4}^2 dx \leq C\delta^4 \int_\R e^{-2c_1\delta t - 2c \delta\abs{x} } dx \leq C\delta^3 e^{-2 c_1 \delta t}.
\end{equation}
And $ K_5 $ and $ K_6 $ satisfy that
\begin{align*}
-\kappa^{-1} K_5 =~ & - \frac{1}{v^S_{(\xi,\xi+\sigma)}} \Big[ \tau^1_{\xi} \Big(\frac{\p_x\theta_1^S}{v_1^S} \Big) \left(\tau^3_{\xi+\sigma}(v_3^S) - \vb_m\right) \\
& + \tau^3_{\xi+\sigma} \Big(\frac{\p_x\theta_3^S}{v_3^S} \Big) \left(\tau^1_{\xi}(v_1^S) - \vb_m\right) + \frac{\gamma-1}{R} \p_x \left( \tau^1_{\xi}(u_1^S) \tau^3_{\xi+\sigma}(u_3^S) \right) \Big], \\
-2\mu^{-1} K_6 =~& - \frac{1}{v^S_{(\xi,\xi+\sigma)}} \Big[ \tau^1_{\xi} \Big(\frac{\p_x \left(u_1^S\right)^2}{v_1^S} \Big) \left(\tau^3_{\xi+\sigma}(v_3^S) - \vb_m\right) \\
& + \tau^3_{\xi+\sigma} \Big(\frac{\p_x\left(u_3^S\right)^2}{v_3^S} \Big) \left(\tau^1_{\xi}(v_1^S) - \vb_m\right) -2 \p_x \left( \tau^1_{\xi}(u_1^S) \tau^3_{\xi+\sigma}(u_3^S) \right) \Big].
\end{align*}
Similar to \cref{K-4}, one can get from \cref{Lem-shocks,cross} that
\begin{equation}\label{K-5-6}
\sum_{k=0}^1 \int_\R \left(\abs{\p_x^k K_5}^2 + \abs{\p_x^k K_6}^2 \right) dx \leq C\delta^3 e^{-2c_1 \delta t}.
\end{equation}
By \cref{Lem-shift}, one can verify that
\begin{equation}\label{K-7}
\sum_{k=0}^1 \int_\R \abs{\p_x^k K_7}^2 dx \leq C\e^2 e^{-4 \alpha  t}.
\end{equation}
Collecting \cref{K-1,K-2,K-3,K-4,K-5-6,K-7} gives that
\begin{equation}\label{int-F3-min}
\sum_{k=0}^1 \int_{-\infty}^{s_3 t} \abs{\p_x^k F_3}^2 dx \leq C\e^2 e^{- 2\alpha t} + C\delta^3 e^{-2c_1\delta t}.
\end{equation}

\vspace{0.3cm}

Case 2. If $ x > s_3 t, $ one can decompose $ F_3 $ according to \cref{int-plus} and using similar arguments as in the case 1 to obtain that
\begin{equation}\label{int-F3-plus}
	\sum_{k=0}^1 \int_{s_3 t}^{+\infty} \abs{\p_x^k F_3}^2 dx \leq C\e^2 e^{-2\alpha t} + C\delta^3 e^{-2c_1\delta t}.
\end{equation}
We omit the proof in this case.

\end{proof}

\vspace{.3cm}

\appendix 

\section{Proof of \cref{Lem-periodic}}

\begin{proof}
For convenience, by using $ u-\ub $ to substitute $ u, $ one can assume that $ \ub=0. $ 

Denote the perturbation terms
\begin{align*}
& \phi(x,t) = v(x,t)-\vb, \qquad \psi(x,t) = u(x,t) - \ub =u(x,t), \\
& w(x,t) = E(x,t) - \Eb, \qquad \zeta(x,t) = \theta(x,t) - \thetab,
\end{align*}
which satisfies 
\begin{equation}\label{sys}
\begin{aligned}
\p_t \phi- \p_x \psi &=0, \\
\p_t\psi + \p_x \left(\frac{R\zeta}{v}\right) + \p_x\left(\frac{R\thetab}{v}\right) & = \mu \p_x \left(\frac{\p_x \psi}{v} \right), \\
\frac{R}{\gamma-1} \p_t \zeta + p \p_x\psi & = \kappa \p_x \left(\frac{\p_x \zeta}{v} \right) + \frac{\mu}{v} \left(\p_x\psi\right)^2.
\end{aligned}
\end{equation}
Assume that $ k\geq 2 $ and
\begin{equation}\label{apriori-per}
\nu = \sup_{t\in[0,T]} \norm{\phi, \psi, \zeta}_{H^k((0,\pi))}(t) >0
\end{equation}
is small enough.
Multiplying $ -R\thetab \left(\frac{1}{v}-\frac{1}{\vb}\right) $ on \cref{sys}$_1, $ $ \psi $ on \cref{sys}$ _2, $ and $ -\thetab \left(\frac{1}{\theta}-\frac{1}{\thetab}\right)=\frac{\zeta}{\theta} $ on \cref{sys}$ _3, $ respectively, and summing the results together yield that
\begin{align*}
& \p_t \left[ \frac{1}{2}\psi^2 + R\thetab \Phi\left(\frac{v}{\vb}\right) + \frac{R}{\gamma-1} \thetab \Phi\left(\frac{\theta}{\thetab}\right) \right] + \frac{\mu}{v} \left(\p_x \psi\right)^2  + \frac{\kappa}{v\theta} \left(\p_x \zeta\right)^2 \\
& \qquad = \p_x \left[ \frac{\mu}{v} \psi \p_x\psi + \frac{\kappa}{v\theta} \zeta\p_x\zeta - R \left( \frac{\theta}{v} - \frac{\thetab}{\vb} \right) \psi \right] + \frac{\kappa}{v\theta^2} \zeta (\p_x\zeta)^2 + \frac{\mu}{v\theta} \zeta \left( \p_x \psi \right)^2,
\end{align*}
where $ \Phi(s) = s-\ln s -1. $
It follows from \cref{apriori-per} that
\begin{equation}\label{per-est-1}
\begin{aligned}
\frac{d}{dt} \int_0^\pi \left[ \frac{1}{2}\psi^2 + R\thetab \Phi\left(\frac{v}{\vb}\right) + \frac{R}{\gamma-1} \thetab \Phi\left(\frac{\theta}{\thetab}\right) \right] dx + 2 c_1 \norm{\p_x\psi, \p_x\zeta}^2 \leq 0,
\end{aligned}
\end{equation}
for some constant $ c_1>0. $ By the conservative forms of \cref{NS}, one has that
\begin{equation}\label{per-ave}
\int_0^\pi \left( \phi, \psi, w \right)(x,t) \equiv 0, \quad t\geq 0.
\end{equation}
Then the Poincar\'{e} inequality yields that
\begin{equation}\label{poin-1}
\norm{\phi} \leq a \norm{\p_x \phi}, \norm{\psi} \leq a \norm{\p_x \psi}, \norm{w} \leq a \norm{\p_x w},
\end{equation}
for some constant $ a>0. $ This, together with $ \zeta = \frac{\gamma-1}{R} \left( w-\frac{1}{2}\psi^2 \right), $ yields that
\begin{align}
\norm{\p_x \zeta}^2 & \geq a^{-2} \norm{\zeta}^2 - \frac{(\gamma-1)^2}{4R^2a^2} \norm{\psi}_{L^\infty}^2 \norm{\psi}^2 - \frac{(\gamma-1)^2}{R^2} \norm{\psi}_{L^\infty}^2 \norm{\p_x\psi}^2 \notag \\
& \geq a^{-2} \norm{\zeta}^2 - \frac{(\gamma-1)^2}{4R^2a^2} \nu^2 \norm{\psi}^2 - \frac{(\gamma-1)^2}{R^2} \nu^2 \norm{\p_x\psi}^2. \label{poin-2}
\end{align}
Thus, by \cref{poin-1,poin-2,per-est-1}, if $ \nu>0 $ is small enough, one has that
\begin{equation}\label{per-est-2}
\begin{aligned}
\frac{d}{dt} \int_0^\pi \left[ \frac{1}{2}\psi^2 + R\thetab \Phi\left(\frac{v}{\vb}\right) + \frac{R}{\gamma-1} \thetab \Phi\left(\frac{\theta}{\thetab}\right) \right] dx + c_1 \norm{\p_x\psi, \p_x\zeta}^2 + c_2 \norm{\psi,\zeta}^2 \leq 0,
\end{aligned}
\end{equation}
By using \cref{sys}$ _1, $ \cref{sys}$ _2 $ is equivalent to 
\begin{align*}
\p_t \psi + \frac{R}{v} \p_x\zeta = \p_t \left( \frac{\mu}{v} \p_x\phi \right) + \frac{R\theta}{v^2} \p_x\phi.
\end{align*}
Then multiplying this equation by $ \frac{\p_x\phi}{v} $ and using \cref{sys}$ _1 $ again, one has that
\begin{equation}\label{per-est-3}
\begin{aligned}
\frac{d}{dt} \int_0^\pi \left[ \frac{\mu}{2v^2} (\p_x\phi)^2 - \frac{\psi}{v} \p_x\phi \right] dx + c_3 \norm{\p_x\phi}^2 \leq C\norm{\p_x\psi, \p_x\zeta}^2.
\end{aligned}
\end{equation}
Collecting \cref{per-est-2,per-est-3}, and using \cref{poin-1}, one has that
\begin{equation}\label{per-est-4}
\begin{aligned}
& \frac{d}{dt} \int_0^\pi \Big\{ M_1 \Big[ \frac{1}{2}\psi^2 + R\thetab \Phi\Big(\frac{v}{\vb}\Big) + \frac{R}{\gamma-1} \thetab \Phi\Big(\frac{\theta}{\thetab}\Big) \Big] + \frac{\mu}{2v^2} (\p_x\phi)^2 - \frac{\psi}{v} \p_x\phi \Big\} dx \\
& \qquad + c_4 \norm{\p_x\phi, \p_x\psi,\p_x\zeta}^2 + c_4 \norm{\phi, \psi,\zeta}^2 \leq 0. 
\end{aligned}
\end{equation}
If $ M_1>0 $ is large enough, the terms in $ \{ \cdots \} $ in \cref{per-est-4} satisfy that
\begin{equation}
c_5^{-1} \norm{\phi, \psi, \zeta, \p_x\phi}^2 \leq 
\int_0^\pi \{\cdots\} dx \leq c_5 \norm{\phi, \psi, \zeta, \p_x\phi}^2,
\end{equation}
which implies that
\begin{equation}
\norm{\phi, \psi, \zeta, \p_x\phi}^2(t) \leq \left(\norm{\phi_0}_1^2+ \norm{\psi_0,\zeta_0}^2\right) e^{-\frac{c_4}{c_5} t}.
\end{equation}
Since the estimates for the higher order derivatives are standard and their exponential decay rates can be proved in the same way with the aid of Poincar\'{e} inequality, thus we omit the details.

\end{proof}

\vspace{.3cm}


\bibliographystyle{amsplain}

\end{document}